\newcommand{\R}{{\mathbb{R}}}
\newcommand{\E}{\mathrm{E}}
\newcommand{\HH}{\mathcal{H}}
\renewcommand{\P}{\mathrm{P}}
\renewcommand{\d}{\mathrm{d}}
\newcommand{\<}{\langle}
\renewcommand{\>}{\rangle}
\newcommand{\e}{\mathrm{e}}
\newcommand{\Var}{\text{\rm Var}}
\newcommand{\lip}{\text{\rm Lip}}
\DeclareMathOperator{\Cov}{\text{\rm Cov}}
\newcommand{\ep}{\varepsilon}
\newcommand{\cS}{\mathcal{S}}
\newcommand{\Norm}[1]{\left\|  #1   \right\|}
\newtheorem{stat}{Statement}[section]
\newtheorem{proposition}[stat]{Proposition}
\newtheorem{corollary}[stat]{Corollary}
\newtheorem{theorem}[stat]{Theorem}
\newtheorem{lemma}[stat]{Lemma}
\theoremstyle{definition}
\newtheorem{definition}[stat]{Definition}
\newtheorem{remark}[stat]{Remark}
\newtheorem{example}[stat]{Example}
\numberwithin{equation}{section}
\newtheorem{thmprime}{Theorem}
\newtheorem{propprime}{Proposition}
\title{Central limit theorems\\for spatial averages of the stochastic heat
	equation via Malliavin-Stein's method\thanks{%
	Research supported in part by  NSF grants DMS-1811181 (D.N.) and DMS-1855439 (D.K.), and FNR grant APOGee (R-AGR-3585-10) at University of Luxembourg (F.P.).}}
\author{Le Chen\\Emory University\\\texttt{le.chen@emory.edu}\\
	\and
		Davar Khoshnevisan\\University of Utah\\\texttt{davar@math.utah.edu}\\
	\and\and
		David Nualart\\University of Kansas\\\texttt{nualart@ku.edu}\\
	\and
		Fei Pu\\University of Luxembourg\\\texttt{fei.pu@uni.lu}
	}
\date{\today}                                           
\begin{document}

\maketitle

\begin{abstract}
Suppose that  $\{u(t\,, x)\}_{t >0, x \in\R^d}$  is the
solution to a $d$-dimensional stochastic heat equation driven
by a Gaussian noise that is white in time and has a spatially homogeneous
covariance that satisfies Dalang's condition.
 The purpose of this paper is to establish quantitative central limit theorems for
spatial averages of the form
$N^{-d} \int_{[0,N]^d} g(u(t\,,x))\, \d x$, as $N\to\infty$,
where $g$ is a Lipschitz-continuous function or belongs to a class of
locally-Lipschitz functions, using a combination of the Malliavin calculus
and Stein's method for normal approximations.
 Our results include a central limit theorem for the
{\it Hopf-Cole} solution to KPZ equation.
We also establish a functional central limit theorem for these spatial averages.
\end{abstract}

\bigskip\bigskip

\noindent{\it \noindent MSC 2010 subject classification}: 60H015, 60H07, 60F05.
\bigskip

\noindent{\it Keywords}:  Stochastic heat equation, ergodicity, central limit theorem, Malliavin calculus, Stein's method.
\bigskip

\noindent{\it Running head:} CLT for SPDEs.

{\hypersetup{linkcolor=black}
\tableofcontents
}

\section{Introduction}

Consider the following stochastic heat equation on $\R^d$:
\begin{equation}\label{E:SHE}
	 \frac{\partial u}{\partial t}(t\,,x)
	 = \tfrac12\Delta u(t\,,x) + \sigma(u(t\,,x))\eta(t\,,x)
	 \qquad \text{for all }(t\,,x) \in (0\,,\infty)\times \R^d,
\end{equation}
subject to $u(0\,,x)=1$  for all  $x\in \R^d$.
The diffusion coefficient $\sigma:\R\to\R$ is  assumed to be  nonrandom
and Lipschitz continuous.
Moreover, we avoid trivialities by  always assuming that
\begin{equation}\label{E:sigma(1)}
	\sigma(1)\ne 0.\footnote{%
	Indeed, if $\sigma(1)=0$ then it can be checked that the the solution is degenerate: $u(t\,,x)\equiv1$
	for all $(t\,,x)\in\R_+\times\R^d$.}
\end{equation}
The noise term $\eta$ denotes a centered, generalized Gaussian random field such that
\begin{align} \label{E:eta}
	\Cov[\eta(t\,,x)\,,\eta(s\,,y)] = \delta_0(t-s)f(x-y)
	\qquad\text{for all $s,t\ge0$ and $x,y\in\R^d$},
\end{align}
for a nonnegative-definite tempered Borel measure $f$ on $\R^d$ that we fix
throughout.   More formally, this means that the Wiener-integrals
\begin{equation}\label{E:W}
	W_t(\phi):=\int_{(0,t)\times\R^d}
	\phi(x)\,\eta(\d r\,\d x)
	\qquad[t\ge0, \phi\in\mathscr{S}(\R^d)]
\end{equation}
define a centered Gaussian random field with covariance
\[
	\Cov\left[W_s(\phi_1)\,,W_t(\phi_2)\right] = (s\wedge t)
	\< \phi_1\,,\phi_2*f \>_{L^2(\R^d)}
	\qquad\text{for all $s,t\ge0$ and $\phi_1,\phi_2\in\mathscr{S}(\R^d)$}.
\]
Note, among other things,
that $\{ W_t\}_{t\ge 0} $ is an  infinite dimensional Brownian motion.

Recall that the Fourier transform
$\widehat{f}$ of the measure $f$ is a tempered Borel measure on $\R^d$.
We assume here and throughout that $\widehat f$ satisfies the integrability condition\footnote{%
	To be concrete, the Fourier transform is normalized
	so that $\widehat{h}(z) = \int_{\R^d}\e^{ix\cdot z}h(x)\,\d x$ for all $h\in L^1(\R^d)$
	and $z\in\R^d$.} -- {\it Dalang's condition}:
\begin{equation}\label{E:Dalang}
	\int_{\R^d}\frac{\widehat{f}(\d z)}{1+\|z\|^2}<\infty.
\end{equation}
For some results, we will need the following reinforced version of Dalang's condition:
\begin{equation} \label{E:Dalang2}
	\int_{\R^d} \frac{\widehat{f}(\d z)}{(1+\|z\|^2)^{1-\alpha}}<\infty, \qquad
	\text{for some $\alpha \in (0,1]$}.
\end{equation}
Dalang \cite{Dalang1999} has proved that, because of \eqref{E:Dalang}, the stochastic heat equation
\eqref{E:SHE} has a mild solution $u=\{ u(t\,,x)\}_{t\ge 0, x\in \R^d}$
that is the unique predictable random field such that
\begin{equation}\label{E:moments}
	\adjustlimits
	\sup_{t\in [0,T]}\sup_{x\in \R^d} \E\left( | u(t\,,x)|^k \right) <\infty
	\qquad\text{for every $T>0$ and $k\ge2$}.
\end{equation}
Moreover, ``mild'' refers to the fact that $u$ satisfies the evolution equation
\begin{equation}\label{E:mild}
	u(t\,,x)= 1+ \int_{(0,t)\times\R^d} \bm{p}_{t-s}(x-y) \sigma(u(s\,,y))\, \eta(\d s\, \d y)
	\qquad\text{for all $t>0$ and $x\in\R^d$},
\end{equation}
where  $\bm{p}$ denotes the heat kernel on $(0\,,\infty)\times\R^d$; that is,
\[
	\bm{p}_t(x) = \frac{1}{(2\pi t)^{d/2}}\exp\left( - \frac{\|x\|^2}{2t}\right)
	\qquad[t>0, x\in \R^d].
\]
The mapping $(t\,,x) \mapsto u(t\,,x)$   is continuous in   $L^k(\Omega)$  on
$[0\,,\infty)\times\R^d$ for every  $k\ge2$; see Walsh \cite{Walsh}
for the case that $f=\delta_0$, and Dalang \cite{Dalang1999} and its method for
the general case.

In Ref.\ \cite{CKNP} we  observed that
the spatial random field $u(t)=\{u(t\,,x)\}_{x\in\R^d}$ is stationary for every $t\ge0$,
and we proved that $u(t)$ is an ergodic random field for every $t>0$
provided that $\hat{f} \{0\} =0$.
In particular, if the spectral measure $\widehat{f}$ does not have an atom at $0$,
then the ergodic theorem implies that, for all $t>0$ and all measurable functions   $g: \R \rightarrow \R$
such that  $\E[|g(u(t\,,0))|]<\infty$,
\begin{equation}\label{E:ergodic}
    \lim_{N\to\infty} \frac{1}{N^d}\int_{[0,N]^d} g(u(t\,,x))\,\d x= \E[g(u(t\,,0))]
    \qquad\text{a.s.}.
\end{equation}
A natural question is to determine whether
\eqref{E:ergodic} has a matching central limit theorem (CLT).  That is,
we would like to  establish the convergence in distribution of
$N^{d/2} \mathcal{S}_{N,t} (g)$ to a normal law as $N$ tends to infinity,
where
\begin{equation} \label{E:SN}
	\mathcal{S}_{N,t} (g)=
	N^{-d}  \int_{[0,N]^d} g(u(t\,,x))\, \d x - \E [ g(u(t\,,0))]
	\qquad[N,t>0].
\end{equation}

In a recent paper \cite{CKNP2}, we have derived such a CLT under the following assumption:
\begin{equation}\label{E:f_finite}
	0< f(\R^d)<\infty,
\end{equation}
when $g\in \lip$, where ``$\lip$'' denotes the collection of all real-valued
Lipschitz-continuous functions on $\R$.
Our proof rested on  Poincar\'e-type inequalities and Malliavin's calculus,
as well as compactness arguments and L\'evy's characterization of Brownian motion.
The positivity of the total mass of $f$ [in \eqref{E:f_finite}]
merely ensures nontriviality. And the
finite-mass condition on $f$ turns out to be optimal.
Moreover, we  considered, instead of only CLTs for
$N^{-d}\int_{[0,N]^d}g(u(t\,,x))\,\d x$,  functional CLTs for $N^{-d} \int_{\R^d} \psi(x/N) g(u(t\,,x))\,\d x$
where $\psi:\R^d\to\R$ ranges over a large class of nice functions.

In the present paper we appeal to the {\it Malliavin-Stein method} in
order to  establish convergence of
$N^{d/2} \mathcal{S}_{N,t} (g)$ to a normal law in the total variation distance,
which we denote by $d_{\rm TV}$ throughout.
As was initiated by Nourdin and Peccati \cite{NP09,NP}, the combination of Malliavin's calculus
with Stein's method for normal approximations can provide an effective method for deriving quantitative
CLTs for functionals of Gaussian random fields.
Moreover, we will establish such central limit theorems not only for $g\in \lip$, but also for certain locally Lipschitz functions such as
\begin{align}\label{E:Example}
	g(u) = \log(u)\quad\text{and}\quad g(u)=u^\alpha\quad
	\text{for $\alpha\ne 0$ \text{ and $u>0$}.}
\end{align}
In particular, our results will cover the case when $g(u)=\log(u)$ (with $\sigma(u)=u$, $f=\delta_0$, $d=1$),
in which case $g(u)$ is called the {\it Hopf-Cole} solution or {\it height function} of the corresponding {\it
Kardar-Parisi-Zhang} (KPZ) equation \cite{KardarParisiZhang1986}.

Huang et al  \cite{HuangNualartViitasaari2018} were the first to use the
Malliavin-Stein method in order to  study spatial averages of parabolic  SPDEs.
They ({\it ibid}.) studied  the case that $d=1$ and $f=\delta_0$ --- that is, the stochastic heat equation
in $1+1$ dimension, driven by
space-time white noise --- and  proved that, in the case that $g(u)=u$,
\begin{equation}\label{E:HNV18}
	d_{\rm TV}\left( N^{1/2}\mathcal{S}_{N,t} (u) ~,~X\right) = O(1/\sqrt N)
	\qquad\text{as $N\to\infty$},
\end{equation}
for every fixed $t>0$, where $X=X(t)$ has a centered normal distribution.
Huang et al \cite{HuangLeNualart2017,HuangNualartViitasaariZheng2019} study the case that
$d\ge1$ and $f(\d x)= \| x\|^{-\beta}\,\d x$ for some $\beta\in(0\,,d\wedge2)$. This
Riesz-type covariance form does not satisfy condition \eqref{E:f_finite}. Huang et al ({\it ibid.})
establish a CLT though with a nonstandard normalization that depends on the numerical value of
the Riesz kernel index $\beta$.
More recently,
Nualart and Zheng \cite{NZ} use Wiener-chaos expansions to
derive CLTs in the case that $\sigma(u)=u$ and $g(u)=u$.

Let us next describe some of the highlights of this paper. Precise formulations can be found in the next section.

Theorem  \ref{T:TV} states that
if the variance of $N^{d/2} \mathcal{S}_{N,t} (g)$  converges to a strictly positive real number,
then $N^{d/2} \mathcal{S}_{N,t} (g)$ converges in total variation to a normal law.
The proof is based on the Malliavin-Stein approach, as well as on an abstract ergodic theorem
for functionals of the underlying Gaussian noise $\eta$.
Sufficient conditions for the limit variance to be nonzero are given in Proposition \ref{P:B_LB}.

Theorem \ref{T:FCLT}  provides a functional version, in the time variable, of
some of our recent work \cite[Theorem 1.1]{CKNP2}.
This result is a non-trivial extension of a functional CLT of Huang et al. in \cite{HuangNualartViitasaari2018},
valid for SPDEs that are driven by space-time white noise.

We are able to explore the rate of convergence in total variation of $N^{d/2}\mathcal{S}_{N,t}(g)$ to
a normal law in two special cases:
\begin{compactenum}
\item In Theorem  \ref{T:TV_Bnd}  we study the case that $g(v)=v$
	for all $v\in\R$, and find bounds on the total variation distance
	between $\mathcal{S}_{N,t} (g)$, normalized by its standard
	deviation, and  ${\rm N}(0\,,1)$; and
\item In Theorem \ref{T:PAM} we restrict our attention to a general $g$, but study the rate-of-convergence problem  for
	the  so-called {\it parabolic Anderson model}, which is \eqref{E:SHE} in the case that
	$\sigma(z)=z$ for all $z\in\R$.
\end{compactenum}
In Theorems \ref{T:TV}, \ref{T:FCLT}  and  \ref{T:PAM},  $g$ can be either a globally Lipschitz function
or certain locally Lipschitz function that includes examples in \eqref{E:Example}.
The proofs of these theorems can be found respectively in Sections
\ref{S:TV}, \ref{S:FCLT}, \ref{S:Bound} and \ref{S:PAM}.

\bigskip

In this paper and its companion papers \cite{CKNP,CKNP2,CKNP3} we
have presented several new ways of establishing
CLTs for functionals of infinitely-many particle systems based on Malliavin's calculus, compactness,
Poincar\'e inequalities,  the Malliavin-Stein method, and Clark-Ocone formulas.
Previous, more established methods, for deriving
CLTs for particle systems include:
\begin{compactitem}
\item The use of martingale CLTs (see, for example, Deuschel \cite{Deuschel});
\item CLTs for strongly mixing processes (see, for example, the monograph of Bradley \cite{Bradley});
	and
\item Techniques based on association,
	a notion that we recall in the appendix (see, for example, Newman and Wright \cite{NewmanWright} with various
	extensions that can be found in Newman \cite{Newman}, and Rao \cite{Rao12}).
\end{compactitem}
We can compare the techniques of this paper and its companions ({\it ibid.})
with the above methods in the context of SPDEs and systems
of interacting SDEs as follows:
\begin{compactitem}
\item Our methods provide significant improvements over those that appeal to martingale CLTs (see
	\cite{CKNP3}, and  consider adapting the methods of the present paper to the semi-discrete setting
	\cite{Deuschel} for still dramatic improvements such as those hinted at in Corollary
	\ref{C:T_CLT} below);
\item We do not know how to establish strong mixing for SPDEs except possibly when $\sigma$ is constant.
	In that very simple case, the solution to \eqref{E:SHE} is a Gaussian process and one can try to adapt the existing
	ergodic theory (see for example Helson and Sarason \cite{HS}, and Dym and McKean \cite{DymMcKean} for
	an overview) to the present setting.
	We warn however that even that effort will likely require a fair amount of work; and
\item We can try to implement association techniques to derive CLTs for SPDEs (instead of the present methods).
	It is known that the solution to \eqref{E:SHE} is associated in a few  special instances:
	Corwin and Quastel \cite{CorwinQuastel13} (see also Corwin and Ghosal \cite{CorwinGhosal18})
	observed that $u$ is associated when $d=1$, $f=\delta_0$, and $\sigma(u)=u$; and
	a theorem of Pitt \cite{Pitt} implies that $u$ is associated whenever $\sigma$ is a constant.
	The strongest association result that we are able to prove requires a good deal more effort,
	and yet only states  that: \emph{$u$ is associated provided that $\sigma(u)$ does not change sign};
	see Theorem \ref{T:assoc}.
	As we shall see, CLTs for \eqref{E:SHE} do not require that $\sigma(u)$ does not change sign.
	Therefore, we will not pursue association ideas vigorously here.
\end{compactitem}

Throughout, $\mathcal{F}=\{\mathcal{F}_t\}_{t\ge0}$ denotes the Brownian filtration
generated by the infinite dimensional Brownian motion $\{W_t\}_{t\ge0}$, defined in \eqref{E:W}.
As is customary, we assume that $\mathcal{F}$ is augmented in the usual way.
For every $Z\in L^k(\Omega)$, we write $\|Z\|_k$ instead of the more cumbersome
$\|Z\|_{L^k(\Omega)}=\{\E(|Z|^k)\}^{1/k}$.
We use $\mathrm{N}(0\,,1)$ to denote the standard normal distribution.
For every $g:\R\mapsto \R$ we write
\[
	\lip(g) = \sup_{-\infty<a<b<\infty}
	\frac{|g(b)-g(a)|}{|b-a|}.
\]
Thus, $g\in\lip$ if and only if $\lip(g)<\infty$.

\section{Main results} \label{S:Main}

Let $\mathcal{S}_{N,t} (g)$ denote the random variable defined in (\ref{E:SN}) for every $t\ge0$, $N>0$,  and any measurable function $g: \R \rightarrow \R$.
Throughout, we will write, for $t_1,t_2 \ge 0$,
\begin{align} \label{E:Var_N}
	\mathbf{B}_{N,t_1,t_2}(g) := {\rm Cov} \left( N^{d/2} \mathcal{S}_{N,t_1} (g)
	\,, N^{d/2} \mathcal{S}_{N,t_2} (g) \right)
\end{align}
and  for  $t_1=t_2 =t$ we put $\mathbf{B}_{N,t}(g)=\mathbf{B}_{N,t,t}(g)$.
We have proven in \cite[Proposition 5.2]{CKNP2}\footnote{
	This result is proved in  \cite{CKNP2} in the case $t_1=t_2$ and the proof in the case $t_1  \not =t_2$ is analogous.}
that,  when $g\in \lip$,
\begin{align}\label{E:Var_Lim}
	\lim_{N\to\infty}\mathbf{B}_{N,t_1,t_2} (g)
	= \mathbf{B}_{t_1, t_2}(g)
	:= \int_{\R^d} {\rm Cov} \left[ g(u(t_1\,,x))~,~ g(u(t_2\,,0)) \right] \d x,
\end{align}
and
\begin{equation} \label{E:Var_finite}
 	\int_{\R^d}  \left|{\rm Cov} [ g(u(t_1\,,x))~,~ g(u(t_2\,,0))] \right| \d x <\infty;
\end{equation}
see \cite[Lemma 5.1]{CKNP2}. Denote $\mathbf{B}_{t}(g) =\mathbf{B}_{t, t}(g) $.

We are also interested in the cases when the above Lipschitz functions $g$ are replaced by certain locally Lipschitz ones.
In particular, assuming $\sigma(0)=0$, the solution $u(t,x)$ is strictly positive almost surely and it turns out that the following set of conditions will be sufficient for this purpose:
\begin{subnumcases}{ \label{E:cond_g} }
	g\in C^2\left(0,\infty\right),  \label{E:cond_g1}\\
	\text{$g'$ is either strictly positive or strictly negative,} \label{E:cond_g2} \\
	\text{$g''$ is monotone over $(0,\infty)$,} \label{E:cond_g3} \\
	\E\left( \left|\frac{\d^i g}{\d x^i}\,(u(t\,,0))\right|^k\right)<\infty, \quad
	\text{for all $t>0$, $i\in \{0,1,2\}$ and $k\in \mathbb{Z}$.} \label{E:cond_g4}
\end{subnumcases}
It is clear that conditions \eqref{E:cond_g1} -- \eqref{E:cond_g3} are satisfied by examples in \eqref{E:Example}.
As for condition \eqref{E:cond_g4}, thanks to the  comparison  principle established by Chen and Huang \cite{CH19}  and nonnegative moments proved in Chen and Huang \cite[Theorem 1.8]{CH19Density}
(see also Theorem 5.1 of Conus et al \cite{CJK12} for the case when $d=1$ and $f=\delta_0$),
by assuming both condition $\sigma(0)=0$ and the reinforced Dalang's condition \eqref{E:Dalang2},
all examples in \eqref{E:Example} satisfy condition \eqref{E:cond_g4} (even for all $i\ge 0$).
One can see that conditions regarding locally Lipschitz $g$ in Theorems \ref{T:TV}, \ref{T:FCLT}, and \ref{T:PAM} are all special cases of \eqref{E:cond_g}.

Now we are ready to state our results.
The following is the first result of this paper, and  shows that   the weak convergence
theorem of \cite[Theorem 1.1]{CKNP2} holds in total variation when the limit variance is not zero.
This is a strict improvement because manifestly $\lim_{N\to\infty}
N^{d/2} \mathcal{S}_{N,t} (g)=0$ in $L^2(\Omega)$ when the limiting variance is zero.

\begin{theorem} \label{T:TV}
	Let $u$ denote the solution to \eqref{E:SHE} with $u(0)\equiv1$ and $\sigma(1)\ne 0$
	and suppose that \eqref{E:f_finite} holds.
	Suppose that one of the three conditions holds:

	\noindent
	(i)  $g\in \lip$,

	\noindent
	(ii) $g\in C^1(\R)$  and  for some $k>4$
	\begin{align} \label{E:MmCond_TV}
	    \E\left( \left| g^{(i)} (u(s\,,0))\right|^k\right)<\infty, \qquad
	    \text{for all $s>0$, $i\in \{0,1\}$}.
	\end{align}

	\noindent (iii) $\sigma(0)=0$,  $f$ satisfies the reinforced Dalang's condition \eqref{E:Dalang2},
	$g\in C^1(0,\infty)$ and   \eqref{E:MmCond_TV} holds.

	Then, properties \eqref{E:Var_Lim} and \eqref{E:Var_finite} are satisfied and, for any $t>0$, provided that $\mathbf{B}_t(g)>0$, we have that
	\begin{align} \label{E:TV}
	    \lim _{N \rightarrow \infty} d_{\rm TV}
	    \left( N^{d/2} \mathcal{S}_{N,t} (g)~,~\sqrt{  \mathbf{B}_t(g) }\,\mathrm{N}(0,1) \right)=0.
	\end{align}
	Moreover,  in Case (iii), we have the  following sufficient condition for $\mathbf{B}_t(g)\in (0,\infty)$:
	\begin{align} \label{E:gMonotone}
	    \text{$\sigma(x)\ne 0$ for all $x>0$ and $g'$ is either strictly positive or
	    strictly negative for all $x>0$,}
	\end{align}
 \end{theorem}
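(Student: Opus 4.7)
The plan is to invoke the Malliavin--Stein bound of Nourdin--Peccati. Set $F_N := N^{d/2}\cS_{N,t}(g)$; under hypothesis (i), $F_N\in\mathbb{D}^{1,2}$ with
$$D_{s,y}F_N = N^{-d/2}\int_{[0,N]^d} g'(u(t,x))\, D_{s,y}u(t,x)\,\d x.$$
The abstract total-variation bound reads
$$d_{\mathrm{TV}}\!\left(F_N\,,\sqrt{\Var(F_N)}\,\mathrm{N}(0,1)\right) \le \frac{2}{\Var(F_N)}\sqrt{\Var\InPrd{DF_N\,,-DL^{-1}F_N}_{\HH}},$$
where $\HH$ is the Cameron--Martin space of the noise and $L$ is the Ornstein--Uhlenbeck generator. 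Since $\Var(F_N)\to \mathbf{B}_t(g)>0$ by \eqref{E:Var_Lim}, the proof of \eqref{E:TV} reduces to showing the key property
$$\Var \InPrd{DF_N\,,-DL^{-1}F_N}_{\HH} \longrightarrow 0 \qquad \text{as $N\to\infty$}.$$

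The crux of the matter lies in proving this variance convergence. Using the Mehler representation $-D_{s,y}L^{-1}F_N = \int_0^\infty \e^{-r}P_r D_{s,y}F_N\,\d r$, one can expand $\InPrd{DF_N,-DL^{-1}F_N}_\HH$ as $N^{-d}$ times a double spatial integral over $[0,N]^{2d}$ of a translation-covariant functional of the noise $\eta$. The idea is then to apply an abstract ergodic theorem in $L^2$ for functionals of the spatially stationary noise: the sufficient condition is summability of spatial covariances of the integrand in the displacement direction, which yields $L^2$-convergence of the normalised spatial averages to their means. The main obstacle, and the technical heart of the argument, is establishing sufficient decay for the mixed covariances of $g'(u(t,x))\,D_{s,y}u(t,x)$ in the displacement $x$, uniformly in $N$; this is accomplished through the Picard iteration scheme for \eqref{E:mild} together with the moment bounds \eqref{E:moments}, the Lipschitz property of $\sigma$, and the finite-mass assumption \eqref{E:f_finite}, which close the required integrability estimate on $\R^d$.

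Cases (ii) and (iii) are reduced to case (i) by smooth truncation: choose a sequence $\varphi_n$ of $C^\infty$ cutoffs (supported away from $0$ and $\infty$ in case (iii), and on a growing compact in case (ii)) and set $g_n := g\circ\varphi_n \in \lip$. Apply the result of case (i) to $g_n$ and pass to the limit $n\to\infty$ by controlling the TV-error between $F_N^{(g)}$ and $F_N^{(g_n)}$. The exponent $k>4$ in \eqref{E:MmCond_TV} is precisely what is needed, in combination with a standard Gaussian-perturbation smoothing argument, to convert $L^2$-approximation of random variables into TV-approximation; the Chen--Huang comparison principle secures the strict positivity of $u(t,x)$ that keeps $g_n(u)$ well-defined in case (iii).

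Finally, the strict positivity $\mathbf{B}_t(g)>0$ under \eqref{E:gMonotone} follows from the Clark--Ocone representation
$$g(u(t,0))-\E[g(u(t,0))] = \int_0^t\!\int_{\R^d}\E\bigl[g'(u(t,0))D_{s,y}u(t,0)\,\big|\,\mathcal{F}_s\bigr]\,\eta(\d s\,\d y),$$
which expresses $\mathbf{B}_t(g)$ as an integral against the kernel $f$ of the squared predictable projection. Near $s\uparrow t$ one has $D_{s,y}u(t,0)\approx \bm{p}_{t-s}(y)\sigma(u(s,y))$, and the sign-definiteness of $g'$ together with $\sigma(x)\neq 0$ for $x>0$ forces the integrand to be strictly sign-definite on a set of positive $\d s\,\d y$-measure, yielding $\mathbf{B}_t(g)>0$.
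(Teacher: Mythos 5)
There is a genuine gap at the technical heart of your argument. The reduction of \eqref{E:TV} to $\Var\InPrd{DF_N,-DL^{-1}F_N}_{\HH}\to0$ is fine, but you never actually prove that convergence: you assert it would follow from ``summability of spatial covariances of the integrand'' obtained ``through the Picard iteration scheme,'' which is precisely the hard estimate, and it is far from routine for a general Lipschitz $g$ and general $\sigma$ (note also that controlling the Mehler/OU term $-DL^{-1}F_N$ typically requires second Malliavin derivatives, which a merely Lipschitz $g$ does not provide). The paper avoids quantitative covariance decay altogether: it represents $\mathcal{S}_{N,t}(g)=\delta(v_{N,t}(g))$ with the \emph{adapted} Clark--Ocone integrand $v_{N,t}(g)(s,z)=\E[D_{s,z}\mathcal{S}_{N,t}(g)\mid\mathcal{F}_s]$, applies the Malliavin--Stein bound \eqref{E:SM1} to that pair, splits $N^d\InPrd{D\mathcal{S}_{N,t}(g),v_{N,t}(g)}_{\HH}$ into a boundary term $I_{2,N}\to0$ in $L^2$ plus $N^{-d}\int_{[0,N]^d}\Psi(x)\,\d x$, and shows $\Psi(x)=\Psi(0)\circ\theta_x$ is a stationary $L^2$ field to which the abstract ergodic theorem (Theorem \ref{T:ergodic}, valid because $\hat f\{0\}=0$ when $f(\R^d)<\infty$) applies; ergodicity, not covariance summability, kills the variance. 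Your treatment of cases (ii)--(iii) by truncation is also unsound as stated: $L^2$-closeness of $F_N^{(g)}$ and $F_N^{(g_n)}$ does not control their total-variation distance, and no ``Gaussian-perturbation smoothing'' does this without nondegeneracy input you have not supplied. The paper instead reruns the case (i) argument verbatim, replacing $\lip(g)$ by $\|g'(u(t_i,0))\|_p$ via H\"older with exponents dictated by $k>4$ in \eqref{E:MmCond_TV}, together with the chain rule of Lemma \ref{L:Sobolev}.

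Your positivity argument for $\mathbf{B}_t(g)$ under \eqref{E:gMonotone} has a second gap. Writing the covariance via Clark--Ocone gives an integral of the product of two conditional expectations against $\d s\,\d y\,f(\d z)$; to conclude it is strictly positive you need that product to be nonnegative for (almost) \emph{all} $s<t$, i.e.\ you need $D_{s,y}u(t,x)\ge0$ (in fact $>0$) globally, not just ``near $s\uparrow t$.'' Sign-definiteness on a set of positive measure proves nothing if the integrand can change sign elsewhere. This is exactly why the paper proves Theorem \ref{T:Positive_D} (strict positivity of the Malliavin derivative, via the Chen--Huang strict-positivity results for the derivative equation with $\delta_0$ initial data, under the reinforced Dalang condition and $\sigma(0)=0$, $\sigma>0$ on $(0,\infty)$), and only then deduces $\mathbf{B}_{t}(g)>0$ through Proposition \ref{P:associate} and Corollary \ref{L:CovFinite}. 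Your use of Chen--Huang only to keep $g(u)$ well defined misses this essential ingredient.
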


It is clear that the parabolic Anderson model ($d=1, \sigma(u)=u, f=\delta_0$) and the function $g(u)=\log u$ satisfy the conditions (iii) and \eqref{E:gMonotone} of Theorem \ref{T:TV}; see \cite[Theorem 5.1]{CJK12} for the moment condition    \eqref{E:MmCond_TV}. Therefore, Theorem \ref{T:TV} implies the following central limit theorem for the
Hopf-Cole solution to the KPZ equation.
\begin{corollary}\label{KPZCLT}
	Assume $d=1$, $f=\delta_0$ and $\sigma(u)=u$ for all  $u\in \R$.
	Then, for all $t>0$,
	\begin{align} \label{CLTKPZ}
		\frac{1}{\sqrt N} \int_0^N \left\{ \log u(t\,, x) -\E[\log u(t\,, 0)] \right\} \d x
		\to{\rm N} (0\,,\sigma_t^2)
		\quad\text{in distribution as $N\to\infty$,}
	\end{align}
	where $\sigma_t^2:= \int_{-\infty}^\infty
	\Cov[\log u(t\,, x)\,, \log u(t\,,0)]\,\d x \in (0\,, \infty)$.
\end{corollary}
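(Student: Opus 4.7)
The plan is that Corollary \ref{KPZCLT} is essentially a direct specialization of Case (iii) of Theorem \ref{T:TV} to the choice $g(u) = \log u$, so the proof reduces to a verification of hypotheses.

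First I would verify the structural conditions on the noise and on $(\sigma,g)$. With $d = 1$ and $f = \delta_0$ we have $0 < f(\R) = 1 < \infty$, so \eqref{E:f_finite} holds. The Fourier transform of $\delta_0$ is a constant multiple of Lebesgue measure, and $\int_\R (1+z^2)^{\alpha-1}\,\d z$ converges for every $\alpha \in (0,1/2)$, which verifies the reinforced Dalang condition \eqref{E:Dalang2}. Since $\sigma(u) = u$ we have $\sigma(0) = 0$ as required by Case (iii), and $\sigma(x) \ne 0$ for all $x > 0$. Finally, $g(u) = \log u$ lies in $C^1(0,\infty)$ with $g'(x) = 1/x > 0$ strictly positive on $(0,\infty)$, so the monotonicity side of \eqref{E:gMonotone} also holds. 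Together these give $\mathbf{B}_t(g) \in (0,\infty)$ directly from Theorem \ref{T:TV}.

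The substantive step, and the only real obstacle, is the moment condition \eqref{E:MmCond_TV}: one needs $\E[|\log u(s,0)|^k] < \infty$ and $\E[u(s,0)^{-k}] < \infty$ for some $k > 4$. Finiteness of all positive moments of $u(s,0)$ in the parabolic Anderson setting is classical and controls the piece $\E[|\log u(s,0)|^k \mathbf{1}_{\{u(s,0) > 1\}}]$. The genuinely delicate ingredient is the corresponding negative-moment bound $\E[u(s,0)^{-k}] < \infty$, which is exactly what Theorem 5.1 of Conus, Joseph and Khoshnevisan \cite{CJK12}, cited in the paragraph preceding Corollary \ref{KPZCLT}, provides; this reference yields negative moments of every order, which in turn controls $\E[|\log u(s,0)|^k \mathbf{1}_{\{u(s,0) \le 1\}}]$ and completes \eqref{E:MmCond_TV}.

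Having checked all the hypotheses of Case (iii), an application of Theorem \ref{T:TV} gives convergence of $N^{1/2}\mathcal{S}_{N,t}(\log)$ to $\sqrt{\mathbf{B}_t(\log)}\,\mathrm{N}(0,1)$ in total variation, hence also in distribution. Unpacking the definition of $\mathcal{S}_{N,t}(g)$ from \eqref{E:SN} produces the expression on the left-hand side of \eqref{CLTKPZ}, and identifying $\sigma_t^2 = \mathbf{B}_t(\log) = \int_\R \Cov[\log u(t,x),\log u(t,0)]\,\d x$ via \eqref{E:Var_Lim} yields the desired variance together with its strict positivity and finiteness.
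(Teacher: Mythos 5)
Your proposal is correct and follows exactly the paper's route: the corollary is obtained by specializing Case (iii) of Theorem \ref{T:TV} to $d=1$, $f=\delta_0$, $\sigma(u)=u$, $g=\log$, with the moment condition \eqref{E:MmCond_TV} supplied by Theorem 5.1 of \cite{CJK12} and the positivity/finiteness of $\sigma_t^2=\mathbf{B}_t(\log)$ coming from \eqref{E:gMonotone} and \eqref{E:Var_finite}. Your verification of the reinforced Dalang condition for $\alpha<1/2$ and of the negative-moment bound is exactly the checking the paper leaves implicit.
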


Our next result provides a functional version in time of the weak convergence proved in \cite[Theorem 1.1]{CKNP2}.

\begin{theorem} \label{T:FCLT}
	Let $u$ denote the solution to \eqref{E:SHE} with $u(0)\equiv1$ and $\sigma(1)\ne 0$
	and suppose that $f$ satisfies both condition \eqref{E:f_finite} and the reinforced Dalang's condition \eqref{E:Dalang2} for some $\alpha\in (0,1]$.
	Then we have the following two cases: \\
	(i) For any   $g\in  C^1(\R)$ such that $g'$ is H\"older continuous of order $\delta \in (0,1]$, it holds that
	\begin{align} \label{E:FCLT}
		\left\{ N^{d/2} \mathcal{S}_{N,t} (g) \right\}_{t\in [0,T]}
		\xrightarrow{C[0,T]}\left\{ \mathcal{G}_t\right\} _{t\in [0,T]}
		\quad\text{as $N\to\infty$, for any $T>0$},
	\end{align}
	where $\{\mathcal{G}_t\}_{t\ge0}$ is a centered Gaussian process with covariance
	$\E [ \mathcal{G}_{t_1}  \mathcal{G}_{t_2} ] = \mathbf{B}_{t_1,t_2}(g)$
	$[$see \eqref{E:Var_Lim}$]$.\\
	(ii) If $\sigma(0)=0$, then \eqref{E:FCLT} holds for any $g\in C^2 (0\,,\infty)$ such that $g''$ is monotone over $(0\,,\infty)$ and
	\begin{align} \label{E:MmCond_FCLT}
		\E\left(\left|\frac{\d^i g}{\d x^i}\,(u(t\,,0))\right|^k\right)  <
		\infty\quad\text{for all $t\ge0$, $k\ge 2$, and $i\in \{0,1,2\}$}.
	\end{align}
 \end{theorem}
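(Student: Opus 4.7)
The plan follows the classical two-step route for functional CLTs: (a) convergence of the finite-dimensional distributions of $\{N^{d/2}\mathcal{S}_{N,t}(g)\}_{t\in[0,T]}$ to those of $\{\mathcal{G}_t\}_{t\in[0,T]}$, and (b) tightness in $C[0,T]$. The limiting covariance is supplied by the two-time version of \eqref{E:Var_Lim}. Step (a) is essentially a consequence of Theorem \ref{T:TV}: fix $0\le t_1<\cdots<t_n\le T$, apply the Cram\'er--Wold device, and observe that any linear combination $\sum_{i=1}^n\lambda_i N^{d/2}\mathcal{S}_{N,t_i}(g)$ equals $N^{-d/2}\int_{[0,N]^d}\{\sum_i\lambda_i g(u(t_i,x))-\E[\sum_i\lambda_i g(u(t_i,0))]\}\,\d x$, a centred spatial average of a stationary $L^2$ functional of the noise. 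The Malliavin--Stein argument that proves Theorem \ref{T:TV} transfers verbatim to this linear combination, since the smoothness/moment hypotheses on $g$ pass through to the multi-time integrand, and the resulting limit variance $\sum_{i,j}\lambda_i\lambda_j\,\mathbf{B}_{t_i,t_j}(g)$ identifies the Gaussian finite-dimensional law of $\mathcal{G}$.

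For step (b), I would verify Kolmogorov's tightness criterion: find $p\ge 2$ and $\gamma>0$ such that
\begin{equation*}
	\E\left[\left|N^{d/2}\bigl(\mathcal{S}_{N,t}(g)-\mathcal{S}_{N,s}(g)\bigr)\right|^p\right]\le C\,|t-s|^{1+\gamma}
\end{equation*}
uniformly in $N$ and $0\le s<t\le T$. Setting $h_{s,t}(x):=g(u(t,x))-g(u(s,x))-\E[g(u(t,0))-g(u(s,0))]$, the quantity to bound is $\E[|N^{-d/2}\int_{[0,N]^d}h_{s,t}(x)\,\d x|^p]$. A Malliavin-type Poincar\'e/hypercontractivity estimate of the kind developed in \cite{CKNP,CKNP2} controls this by a constant (uniform in $N$, thanks to the summable-correlation property \eqref{E:Var_finite}) times $\|h_{s,t}(0)\|_k^p$ for a suitable $k$. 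The reinforced Dalang condition \eqref{E:Dalang2} furnishes the temporal H\"older estimate $\|u(t,x)-u(s,x)\|_k\le C_k|t-s|^{\alpha/2}$ for every $k\ge 2$; pairing this with the H\"older continuity of $g'$ of order $\delta$ in Case (i) yields $\|h_{s,t}(0)\|_k\le C|t-s|^{\alpha(1+\delta)/2}$, while in Case (ii) the moment condition \eqref{E:MmCond_FCLT} gives $\|h_{s,t}(0)\|_k\le C|t-s|^{\alpha/2}$.

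The main obstacle is to produce a temporal exponent strictly greater than $1$, since Kolmogorov's criterion demands $1+\gamma>1$ whereas the direct second-moment estimate delivers only $|t-s|^{\alpha}$ (or $|t-s|^{\alpha(1+\delta)}$). This forces $p$ to be large, which is permissible thanks to the arbitrary-$k$ moment hypotheses on $g$: choosing $p$ so that $p\alpha(1+\delta)/2>1$ in Case (i), respectively $p\alpha/2>1$ in Case (ii), yields the required exponent $1+\gamma$. In Case (ii), the positivity of $u$ (from $\sigma(0)=0$ and the Chen--Huang comparison principle), together with the $C^2$ hypothesis and the monotonicity of $g''$, justify a second-order Taylor expansion of $g$ around $u(s,x)$ whose remainder $\tfrac12 g''(u_\theta(x))(u(t,x)-u(s,x))^2$ is controlled in $L^k$ via H\"older's inequality and \eqref{E:MmCond_FCLT}. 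Combining tightness with the finite-dimensional convergence from step (a) yields the functional CLT in both cases.
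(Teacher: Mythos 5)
Your overall skeleton (finite-dimensional convergence plus Kolmogorov tightness) matches the paper's, and step (a) is essentially sound: whether one uses Cram\'er--Wold with the scalar Malliavin--Stein bound, as you do, or the multivariate Nourdin--Peccati bound, as the paper does, the whole argument reduces to the vanishing of $\Var\bigl(N^d\langle D\mathcal{S}_{N,t_i}(g),v_{N,t_j}(g)\rangle_{\mathcal H}\bigr)$ for possibly distinct times $t_i\ne t_j$, which is exactly Proposition \ref{P:L1Conv}, together with the two-time convergence \eqref{E:Var_Lim}. So that half is fine, modulo saying explicitly that the cross terms $K_{N,t_i,t_j}$ are what appear after expanding the linear combination.

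Step (b) contains a genuine gap. You assert that a ``Poincar\'e/hypercontractivity estimate'' bounds $\E\bigl[|N^{-d/2}\int_{[0,N]^d}h_{s,t}(x)\,\d x|^p\bigr]$ by a constant, uniform in $N$ by \eqref{E:Var_finite}, times $\|h_{s,t}(0)\|_k^p$. No such reduction holds: what is needed is a bound that is \emph{simultaneously} of order $N^{-pd/2}$ and of order $|t-s|^{p\gamma'}$ for some $\gamma'>0$, and these two features cannot be obtained by multiplying a uniform-in-$N$ summable-correlation bound (whose constants do not shrink with $|t-s|$) by a one-point increment norm. Concretely, after Clark--Ocone one must estimate $\E[D_{r,y}\,(g(u(t,x))-g(u(s,x)))\mid\mathcal F_r]$ with both spatial heat-kernel decay and $|t-s|^{\gamma}$ smallness; the paper's Proposition \ref{P:Holder} does this by splitting into the terms $\mathcal A_1$, $\mathcal A_2$, $\mathcal B$, and the hard term $\mathcal A_2$ requires controlling the \emph{time increment of the Malliavin derivative}, $\|D_{r,y}u(t,x)-D_{r,y}u(s,x)\|_k$, via the integral equation for $Du$, the heat-kernel increment bound \eqref{E:time}, BDG, Plancherel, and Lemma \ref{L:4Holder}, producing an integrable singularity $(t_1-r)^{-\gamma\delta}$ that Lemma \ref{L:h1} then absorbs while yielding the crucial factor $N^{-d/2}$. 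None of this is addressed in your proposal; it is precisely the technical core of the theorem. Two further points: hypercontractivity is not available here since these functionals do not live in a fixed Wiener chaos (the paper instead applies BDG to the Clark--Ocone representation, Lemma \ref{L:h1}); and your claimed one-point rate $\|h_{s,t}(0)\|_k\lesssim|t-s|^{\alpha(1+\delta)/2}$ in Case (i) is incorrect --- the H\"older continuity of $g'$ does not improve the rate beyond $|t-s|^{\gamma}$, $\gamma<\alpha/2$; in the paper it in fact enters with the \emph{smaller} exponent $\gamma\delta$. The latter error is not fatal (any positive exponent suffices once $k$ is large), but the missing increment estimate for $D u$ and the unjustified moment reduction are.
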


In the special case that $g(v)=v$ for all $v\in\R$,  we can use  Malliavin-Stein's method to show that
the total variation distance between $\mathcal{S}_{N,t}(g)$
and a suitable normal distribution is $O(N^{-d/2})$. The next theorem contains the exact form
of this statement, and generalizes the recent work of Huang et al \cite{HuangNualartViitasaari2018}.

\begin{theorem}\label{T:TV_Bnd}
	If $g(v)=v$ for all $v\in\R$, then
	for two real numbers $\lambda,L>0$ --- depending only on $(f\,,\sigma)$ --- it holds that
	\begin{equation}\label{E:TV_Bnd}
		d_{\rm TV} \left(   \frac{\mathcal{S}_{N,t}(g)}{\sqrt{\Var(\mathcal{S}_{N,t}(g))}}
		\,,  {\rm N}(0\,,1) \right) \le  \frac{L\e^{\lambda t}}{N^{d/2}\mathbf{B}_{N,t}(g)},
	\end{equation}
	uniformly for all $t>0$ such that
	$\mathbf{B}_t(g)>0$
	and all $N>0$ large enough to ensure that $\mathbf{B}_{N,t}(g)>0$.
\end{theorem}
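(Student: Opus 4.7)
Set $F_N := N^{d/2}\mathcal{S}_{N,t}(g)=N^{-d/2}\int_{[0,N]^d}[u(t,x)-1]\,\d x$, so $\E F_N=0$ and $\Var F_N=\mathbf{B}_{N,t}(g)$. The Nourdin-Peccati Malliavin-Stein inequality, together with Cauchy-Schwarz applied to $\E\langle DF_N,-DL^{-1}F_N\rangle_{\HH}=\Var F_N$, gives
\[
    d_{\rm TV}\!\left(\frac{F_N}{\sqrt{\mathbf{B}_{N,t}(g)}},\mathrm{N}(0,1)\right) \le \frac{2}{\mathbf{B}_{N,t}(g)}\sqrt{\Var\langle DF_N,-DL^{-1}F_N\rangle_{\HH}}.
\]
Hence it suffices to prove $\Var\langle DF_N,-DL^{-1}F_N\rangle_{\HH}\le C\e^{2\lambda t}N^{-d}$ for constants $C,\lambda>0$ depending only on $(f,\sigma)$.

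The first step is to control the Malliavin derivatives of $u$. Differentiating \eqref{E:mild} yields, for $s<t$,
\[
    D_{s,y}u(t,x)=\bm{p}_{t-s}(x-y)\sigma(u(s,y))+\int_{(s,t)\times\R^d}\bm{p}_{t-r}(x-z)\sigma'(u(r,z))D_{s,y}u(r,z)\,\eta(\d r\,\d z),
\]
and an analogous linear SPDE for $D_{s_2,y_2}D_{s_1,y_1}u(t,x)$. Picard iteration combined with Burkholder-Davis-Gundy and the uniform moment bound \eqref{E:moments} produces
\[
    \|D_{s,y}u(t,x)\|_k \le C_k\,\e^{\lambda(t-s)}\bm{p}_{t-s}(x-y),
\]
and, after symmetrizing in $(s_1,s_2)$, the two-kernel analog $\|D_{s_2,y_2}D_{s_1,y_1}u(t,x)\|_k\lesssim \e^{\lambda t}\bm{p}_{t-s_1}(x-y_1)\bm{p}_{t-s_2}(x-y_2)$.

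The second step uses the representation $-DL^{-1}=\int_0^\infty\e^{-r}P_r D\,\d r$ (with $\{P_r\}$ the Ornstein-Uhlenbeck semigroup) together with the standard second-moment inequality for $\Var\langle DF,-DL^{-1}F\rangle_{\HH}$ (cf.\ the parallel argument in \cite{HuangNualartViitasaari2018}) to dominate the variance by a double space-time integral of products $\|D^2F_N\|_2\,\|D^2F_N\|_2$ weighted by two copies of the covariance kernel $f$. Substituting the first-step bounds, each factor $\|D^2F_N\|_2$ becomes $N^{-d/2}$ times an integral over $[0,N]^d$ of a product of two heat kernels. Using $\int_{\R^d}\bm{p}_{t-s}(x-y)\,\d x\le 1$ to evaluate the inner spatial integrations, one of the $[0,N]^d$ volumes per $\|D^2F_N\|_2$ factor is pinned by the translation-invariant kernel $f$, while the finiteness $f(\R^d)<\infty$ from \eqref{E:f_finite} closes the $f$-integrations. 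Collecting powers of $N^d$ yields the desired $N^{-d}$ rate and the exponential prefactor $\e^{2\lambda t}$.

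The main technical obstacle is the bookkeeping in this last step: with eight spatial integration variables produced by the two $\|D^2F_N\|_2$ factors, two translation-invariant kernels $f$, and four heat-kernel factors, one must arrange the pairings so as to extract exactly one extra factor of $N^{-d}$ beyond the trivial $N^{-2d}$ from the definition of $F_N^2$, and no less. This is precisely where \eqref{E:f_finite} is used in a quantitative manner: the gain in $N$ would be spoiled if $f$ had infinite total mass, which is consistent with the optimality of this hypothesis noted in the introduction.
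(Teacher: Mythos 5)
There is a genuine gap: your second step requires $F_N\in\mathbb{D}^{2,4}$, i.e.\ quantitative bounds on the \emph{second} Malliavin derivative of $u(t,x)$, but Theorem \ref{T:TV_Bnd} is stated for a general Lipschitz $\sigma$, for which $u(t,x)$ is not known (and cannot in general be expected) to be twice Malliavin differentiable: differentiating the linear equation for $D_{s,y}u$ once more produces a term involving $\sigma''(u)\,D u\,Du$, and $\sigma''$ simply does not exist when $\sigma$ is merely Lipschitz. The paper proves second-derivative estimates only under $\sigma(z)=z$ (Proposition \ref{P:secondD}) and uses them only for Theorem \ref{T:PAM}. Moreover, even in that case your claimed kernel bound $\bm{p}_{t-s_1}(x-y_1)\bm{p}_{t-s_2}(x-y_2)$ has the wrong shape: the correct estimate is the time-ordered product $\bm{p}_{t-s}(x-y)\bm{p}_{s-r}(y-z)\mathbf{1}_{(r,t)}(s)+\bm{p}_{t-r}(x-z)\bm{p}_{r-s}(z-y)\mathbf{1}_{(0,r)}(s)$, which is not pointwise comparable to yours, so the subsequent bookkeeping would not go through as described.

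The paper's proof avoids second derivatives entirely, and this is exactly where the linearity of $g$ is exploited. One represents $\mathcal{S}_{N,t}(g)=\delta(v_{N,t}(g))$ with the adapted, explicit kernel $v_{N,t}(g)(s,y)=\Pi^{(N)}_{s,y}\sigma(u(s,y))$, obtained from $\E[D_{s,y}u(t,x)\mid\mathcal{F}_s]=\bm{p}_{t-s}(x-y)\sigma(u(s,y))$, and applies Proposition \ref{P:TV} (the Stein bound for $F=\delta(v)$, not the $-DL^{-1}$ form), so the quantity to control is $\Var\langle D\mathcal{S}_{N,t}(g),v_{N,t}(g)\rangle_{\HH}$. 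A stochastic Fubini argument, legitimate precisely because $g'\equiv1$ causes no anticipating factor, gives $D\mathcal{S}_{N,t}(g)=v_{N,t}(g)+\mathcal{T}$ as in \eqref{E:DvT}; then $\Var(\|v_{N,t}(g)\|^2_{\HH})$ is bounded via the Poincar\'e inequality (only first derivatives of $u$ and the a.e.\ weak derivative $\sigma'$ of Remark \ref{R:Lip} enter), while $\Var(\langle\mathcal{T},v_{N,t}(g)\rangle_{\HH})$ is the second moment of a Walsh integral computed directly with Lemma \ref{L:Du}. To keep your $-DL^{-1}$ route you would have to either restrict to affine $\sigma$ (which is the setting of Theorem \ref{T:PAM}, not of this theorem) or supply a substitute for the second-derivative variance inequality; as written, your argument does not prove the stated result.
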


We are able to study the parabolic Anderson model as well.
That is the when $\sigma(z)=z$ for all $z\in\R$.
Though we hasten to add that the proof of the following
is different from that of Theorem \ref{T:TV_Bnd}.

At this point, we are not able to analyze the case where both $\sigma$ and $g$ are
fairly general [nice] functions.
\begin{theorem}\label{T:PAM}
    Let $u$ denote the solution to \eqref{E:SHE} with $\sigma(z)=z$ and $u(0)\equiv1$
    and assume that $f$ satisfies both \eqref{E:f_finite} and the reinforced Dalang's condition \eqref{E:Dalang2} for some $\alpha\in (0,1]$.
    Suppose that  $g\in C^2(0\,,\infty)$ and for some $k>4$,
    \begin{align} \label{E:MmCond_PAM}
	\E\left(\left|\frac{\d^i g}{\d x^i}\,(u(t\,,0))\right|^k\right)  <
	\infty\quad\text{for all $t\ge0$ and $i\in \{0,1,2\}$}.
    \end{align}
    Then for two real numbers $\lambda,L>0$ ---
    depending only on $(f\,,\sigma)$ --- it holds that
    \begin{equation}\label{E:PAM_Bnd}
	d_{\rm TV} \left(   \frac{\mathcal{S}_{N,t}(g)}{\sqrt{\Var(\mathcal{S}_{N,t}(g))}}
	\,,  {\rm N}(0\,,1) \right) \le  \frac{L\: \Theta_t\:  \e^{\lambda t}}{N^{d/2}\mathbf{B}_{N,t}(g)},
    \end{equation}
    uniformly for all $t>0$ such that $\mathbf{B}_{N}(g)>0$ and all $N>0$ large enough to ensure that  $\mathbf{B}_{N,t}(g)>0$, where $\Theta_t:= \Norm{g'(u(t\,,0))}_k
    \max(\Norm{g'(u(t\,,0))}_k,\Norm{g''(u(t\,,0))}_{k})$.
\end{theorem}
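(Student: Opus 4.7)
The plan is to deploy the Malliavin--Stein methodology in the form used by Huang--Nualart--Viitasaari~\cite{HuangNualartViitasaari2018} and Nualart--Zheng~\cite{NZ}. Set $F_N := N^{d/2}\mathcal{S}_{N,t}(g)$, so that $F_N$ is centered with $\sigma_{F_N}^2 = \mathbf{B}_{N,t}(g)$, and use the bound
$$d_{\rm TV}\bigl(F_N/\sigma_{F_N},\mathrm{N}(0,1)\bigr) \leq \frac{2}{\sigma_{F_N}^2}\sqrt{\Var\bigl(\InPrd{DF_N,v^N}_\HH\bigr)},$$
where $\HH$ is the Hilbert space associated to $\eta$ and $v^N_{s,y} = \E[D_{s,y}F_N\mid\mathcal{F}_s]$ comes from the Clark--Ocone representation $F_N = \delta(v^N)$. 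To recover \eqref{E:PAM_Bnd} it suffices to show $\Var(\InPrd{DF_N,v^N}_\HH) \leq C\Theta_t^2\,\e^{2\lambda t}\,N^{-d}$, since dividing by $\sigma_{F_N}^2 = \mathbf{B}_{N,t}(g)$ then yields the claimed right-hand side.

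By the chain rule I would first compute
$$D_{s,y}F_N = N^{-d/2}\int_{[0,N]^d} g'(u(t,x))\,D_{s,y}u(t,x)\,\d x,$$
and the second derivative produces the two-term expression
$$D_{(r,w)}D_{(s,y)}F_N = N^{-d/2}\int_{[0,N]^d}\left[g''(u(t,x))D_{r,w}u(t,x)D_{s,y}u(t,x) + g'(u(t,x))D_{r,w}D_{s,y}u(t,x)\right]\d x.$$
For the parabolic Anderson model the mild formula \eqref{E:mild} is \emph{linear} in $u$, so $D_{s,y}u(t,x)$ satisfies a linear SPDE; Picard iteration then yields the heat-kernel envelopes $\Norm{D_{s,y}u(t,x)}_p \leq C_p\,\bm{p}_{t-s}(x-y)\,\e^{\lambda_p t}$ and $\Norm{D_{r,w}D_{s,y}u(t,x)}_p \leq C_p\,\bm{p}_{t-s}(x-y)\,\bm{p}_{t-r}(x-w)\,\e^{\lambda_p t}$. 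These PAM-specific moment estimates---unavailable for general $\sigma$---are precisely why the theorem assumes $\sigma(z)=z$.

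Next I would estimate $\Var(\InPrd{DF_N,v^N}_\HH)$ via a second-order Malliavin--Poincar\'e inequality, which reduces the variance to iterated $L^2$-norms of $D^2F_N$ integrated against $f\otimes f$. Each such norm is controlled by H\"older's inequality, separating the $g'(u), g''(u)$ factors (controlled by \eqref{E:MmCond_PAM}) from the Malliavin derivatives of $u$ (controlled by the envelopes above). The resulting spatial integrals take the form $N^{-d}\int_{[0,N]^{2d}}\d x\,\d x'\int_0^t\d s\int_{\R^{2d}}\bm{p}_{t-s}(x-y)\bm{p}_{t-s}(x'-z)f(y-z)\,\d y\,\d z$; a direct calculation using \eqref{E:f_finite} and spatial stationarity shows these contribute an overall factor of $N^{-d}$, producing the desired $N^{-d}$ variance estimate.

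The main obstacle will be handling the $g''$ contribution to $D^2 F_N$. Because the Poincar\'e step is essentially quadratic in $D^2 F_N$, one meets fourth-order moments of the product $g''(u(t,x))D_{r,w}u(t,x)D_{s,y}u(t,x)$; a four-fold H\"older split then requires $\Norm{g''(u(t,0))}_k$ for some $k>4$ together with high-order moments of $Du$, which is exactly the content of \eqref{E:MmCond_PAM}. The combinatorial bookkeeping produces the product $\Theta_t = \Norm{g'(u(t,0))}_k\max(\Norm{g'(u(t,0))}_k,\Norm{g''(u(t,0))}_k)$, which equals $1$ for $g(v)=v$ and so recovers Theorem~\ref{T:TV_Bnd} in that case. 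It is worth stressing that relaxing $\sigma(z)=z$ would force the chain rule for $D^k u$ to generate $\sigma'(u),\sigma''(u)$ factors whose Picard iterates seem difficult to control in our locally Lipschitz setting, which is why the argument is confined to the PAM here.
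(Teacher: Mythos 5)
Your overall route coincides with the paper's proof in Section \ref{S:PAM}: represent $N^{d/2}\mathcal{S}_{N,t}(g)=\delta(v^N)$ via the Clark--Ocone formula, apply the Malliavin--Stein bound, and control $\Var\left(\langle DF_N,v^N\rangle_{\mathcal{H}}\right)$ through the Poincar\'e inequality \eqref{E:Poincare}, which brings in $D^2[g(u(t,x))]=g''(u)\,Du\otimes Du+g'(u)\,D^2u$, a H\"older split with $k>4$ that isolates the $g'$, $g''$ factors (producing $\Theta_t$), and PAM-specific moment bounds for $Du$ and $D^2u$; the paper's only structural variant is a Minkowski-in-time step before applying Poincar\'e to each time slice, which is a technical version of your ``second-order Poincar\'e'' reduction, and the final passage from $\Var(K_N)\lesssim N^{-d}$ to \eqref{E:PAM_Bnd} is identical.

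However, the second-derivative envelope you assert is not correct. For the parabolic Anderson model the true bound (Proposition \ref{P:secondD}) has the chained, causal form $\Norm{D_{r,w}D_{s,y}u(t\,,x)}_p\le C\left[\bm{p}_{t-s}(x-y)\,\bm{p}_{s-r}(y-w)\,\mathbf{1}_{\{r<s\}}+\bm{p}_{t-r}(x-w)\,\bm{p}_{r-s}(w-y)\,\mathbf{1}_{\{s<r\}}\right]$, not $\bm{p}_{t-s}(x-y)\,\bm{p}_{t-r}(x-w)$. Your version is not an upper bound: for $r<s$ the leading term of $D_{r,w}D_{s,y}u(t\,,x)$ is $\bm{p}_{t-s}(x-y)\,D_{r,w}u(s\,,y)$, whose $L^p(\Omega)$-norm is of order $\bm{p}_{t-s}(x-y)\,\bm{p}_{s-r}(y-w)$ (indeed $\E[D_{r,w}u(s\,,y)]=\bm{p}_{s-r}(y-w)$ here), and when $s-r\ll t-r$ this exceeds $\bm{p}_{t-s}(x-y)\,\bm{p}_{t-r}(x-w)$ near $w=y$. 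Proving the correct estimate is where a substantial part of the paper's work lies (a Picard iteration combined with a renewal-type inequality and the heat-kernel factorization identities); it does not follow from a one-line appeal to linearity. Once that estimate is substituted for your envelope, your H\"older and semigroup computations do yield $\Var\left(\langle DF_N,v^N\rangle_{\mathcal{H}}\right)\lesssim \Theta_t^2\,\e^{2\lambda t}\,N^{-d}$ and hence \eqref{E:PAM_Bnd}, so the gap is repairable, but the stated bound must be replaced and actually established.
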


In Theorem \ref{T:PAM},  the reinforced Dalang's condition \eqref{E:Dalang2} is used to ensure that  almost surely $u(t,x)>0$ for all  $t\ge 0$ and $x\in \R^d$. We do not need this condition  if we assume  $g\in C^2(\R)$ instead of  $g\in C^2(0\,,\infty)$.

\medskip
 Note that both \eqref{E:TV_Bnd} and \eqref{E:PAM_Bnd} are trivial when $\mathbf{B}_{N,t}(g)=0$.
\bigskip

Among other things, Theorems \ref{T:TV_Bnd} and/or \ref{T:PAM}
and Eq.'s \eqref{E:Var_N} and \eqref{E:Var_Lim} together imply the
following extension of \eqref{E:HNV18} from the case that
$d=1$ and $f=\delta_0$ to the  more general settings of Theorems \ref{T:TV_Bnd}
and/or \ref{T:PAM}: If either $g(v)=v$ for all $v\in\R^d$ or
$\sigma(z)=z$ for all $z\in\R$, and if $\mathbf{B}_t(g)>0$ then
\[
	d_{\rm TV} \left(   \frac{\mathcal{S}_{N,t}(g)}{\sqrt{\Var(\mathcal{S}_{N,t}(g))}}
	\,,  {\rm N}(0\,,1) \right) = O(N^{-d/2})\qquad\text{as $N\to\infty$}.
\]
Moreover, because Theorem \ref{T:TV_Bnd} and \ref{T:PAM} involve
quantitative bounds on the total variation distance, we can also sometimes use
them to prove that $\mathcal{S}_{N,t}(g)$ is asymptotically normal as $N,t\to\infty$
simultaneously. The following provides the requisite technical result that allows for
this sort of undertaking. Before we state the result,
let us note that the parabolic Anderson model [that is, $\sigma(z)=z$
for all $z\in\R$] satisfies all three conditions of the following.

\begin{proposition}\label{P:B_LB}
	Suppose $g(v)=v$ for all $v\in\R$,
	and let $\{t_N\}_{N>0}$ be a net of strictly positive numbers such that
	$t_N=o(N^2)$ as $N\to\infty$. Recall
	condition \eqref{E:sigma(1)} and suppose additionally that either
	$Q=\sigma$ or $Q=-\sigma$
	satisfy any one of the following conditions:
	\begin{compactenum}
		\item $Q(x)\ge0$ for all $x>0$;
		\item There exists $c>0$ such that $Q(w)\ge c$ for all $w>0$;
		\item $Q(0)=0$ and there exists $c>0$ such that
			$Q(w)\ge cw$ for all $w>0$.
	\end{compactenum}
	Then,  $\liminf_{N\to\infty}t_N^{-1}\mathbf{B}_{N,t_N}(g)>0$
	under conditions 1 and 2, and $\liminf_{N\to\infty}\mathbf{B}_{N,t_N}(g)>0$
	under condition 3.
\end{proposition}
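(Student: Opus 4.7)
The plan starts from the mild formulation and the Walsh--It\^o isometry. Since $g(v)=v$ and $\E[u(t,x)]=1$, stochastic Fubini applied to the stochastic integral in \eqref{E:mild} and the isometry give
\[
\mathbf{B}_{N,t}(g) = N^{-d}\int_0^t\!\!\d s\iint Q^{N,t}_s(y_1)\,Q^{N,t}_s(y_2)\,\E\bigl[\sigma(u(s,y_1))\sigma(u(s,y_2))\bigr]\,f(y_1-y_2)\,\d y_1\,\d y_2,
\]
where $Q^{N,t}_s(y):=\int_{[0,N]^d}\bm{p}_{t-s}(x-y)\,\d x\in[0,1]$. Writing $\phi_s(w):=\E[\sigma(u(s,0))\sigma(u(s,w))]$ (well defined by spatial stationarity), the task reduces to lower-bounding this double integral.

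The first ingredient is a geometric estimate. By the semigroup identity $\bm{p}_{t-s}\ast\bm{p}_{t-s}=\bm{p}_{2(t-s)}$,
\[
\int_{\R^d} Q^{N,t}_s(y)\,Q^{N,t}_s(y-w)\,\d y = \iint_{[0,N]^{2d}}\bm{p}_{2(t-s)}(x_1-x_2+w)\,\d x_1\,\d x_2,
\]
which for $|w|\le R$ and $\sqrt{t-s}\le\sqrt{t_N}$ is at least $(N-C(\sqrt{t_N}+R))^{d}=N^{d}(1+o(1))$, uniformly in $s\in[0,t_N]$, thanks to $t_N=o(N^2)$. Consequently
\[
\mathbf{B}_{N,t_N}(g) \ge (1+o(1))\int_0^{t_N}\!\d s\int_{\{|w|\le R\}}\phi_s(w)\,f(\d w),
\]
so after taking $N\to\infty$ and then $R\to\infty$ (using $0<f(\R^d)<\infty$) it suffices to lower-bound $\phi_s(w)$ pointwise.

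Under condition 2 one has $\phi_s(w)\ge c^2$ directly. Under condition 3, the hypothesis $\sigma(0)=0$ lets the comparison principle force $u\ge 0$, so $\sigma(u)\sigma(v)\ge c^2 uv\ge 0$; moreover, because the It\^o representation of $\Cov(u(s,y_1),u(s,y_2))$ has nonnegative integrand when $\sigma\ge 0$, one deduces $\E[u(s,y_1)u(s,y_2)]\ge 1$ from $\E[u]=1$, and hence $\phi_s(w)\ge c^2$. In both these cases the inner integral of the previous display dominates $c^2 f(\R^d) t_N(1+o(1))$, which yields the desired liminf statements. For condition 1 ($\sigma\ge 0$ only), $L^2$-continuity of $u$ at $s=0$ provides $\phi_s(w)\to\sigma(1)^2$ as $s\to 0$, which already gives a positive $O(1)$ lower bound on $\mathbf{B}_{N,t_N}(g)$ from the small-$s$ contribution; to obtain the $O(t_N)$ bound one would invoke association (Theorem \ref{T:assoc}, applicable because $\sigma$ does not change sign) to get $\phi_s(w)\ge(\E[\sigma(u(s,0))])^2$, together with a uniform positive lower bound on $\E[\sigma(u(s,0))]$ extracted from $\E[u(s,0)]=1$ and the Lipschitz property of $\sigma$.

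The hard part will be this last uniform-in-$s$ lower bound on $\phi_s(w)$ under condition 1; conditions 2 and 3 yield the estimate via a purely algebraic pointwise inequality, whereas condition 1 forces the proof to lean on the qualitative association structure of $u$ and a separate argument that $\E[\sigma(u(s,0))]$ stays bounded away from zero for all $s>0$.
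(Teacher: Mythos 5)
Your overall skeleton — expressing $\mathbf{B}_{N,t}(g)$ through the Walsh isometry, the heat-kernel geometric estimate exploiting $t_N=o(N^2)$ and $0<f(\R^d)<\infty$, and then a pointwise lower bound on the kernel $\phi_s(w)=\E[\sigma(u(s,0))\sigma(u(s,w))]$ — is the same strategy as the paper's, and your treatment of condition 3 is essentially sound (comparison gives $u\ge0$, hence $\sigma(u)\sigma(v)\ge c^2uv$ and $\E[u(s,y_1)u(s,y_2)]\ge 1$); in fact it yields the stronger $O(t_N)$ bound there, whereas the paper only claims $\liminf\mathbf{B}_{N,t_N}(g)>0$ under condition 3. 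The crucial difference is that the paper does not prove the kernel lower bound itself: under conditions 1 and 2 it imports the estimate $\Cov[u(t,x),u(t,y)]\ge C\int_0^t(\bm{p}_{2s}*f)(x-y)\,\d s$, and under condition 3 a bound with fixed $\delta,R$, from \cite[Proposition 5.3]{CKNP2}, and the remaining work is only the box/heat-kernel computation you also carry out.

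Precisely where you replace that citation by your own argument there are genuine gaps, and they sit exactly at the cases (1 and 2) for which the stronger conclusion is claimed. Under condition 2, ``$\phi_s(w)\ge c^2$ directly'' presupposes $u(s,\cdot)>0$ a.s.; but conditions 1 and 2 do not assume $\sigma(0)=0$, the comparison principle is unavailable, $u$ can take negative values, and on $\{u<0\}$ the hypotheses say nothing about $\sigma$ beyond Lipschitz continuity, so $\sigma(u)$ need not be bounded below by $c$ (nor even nonnegative, which you also need to discard the region $\{|w|>R\}$ in your reduction). Under condition 1, your association route fails on three counts: Theorem \ref{T:assoc} requires $\sigma\circ u$ not to change sign, which again presupposes sign information on $u$ that is not available; association yields $\E[\sigma(u(s,0))\sigma(u(s,w))]\ge\E[\sigma(u(s,0))]\,\E[\sigma(u(s,w))]$ only when $\sigma$ is monotone, which is not assumed; and the uniform-in-$s$ lower bound on $\E[\sigma(u(s,0))]$ is asserted, not proved — it does not follow from $\E[u(s,0)]=1$ and the Lipschitz property (e.g.\ $\sigma$ may vanish on an interval around $1$ while remaining nonnegative on $(0,\infty)$). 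Since you yourself flag this as ``the hard part,'' the proposal as written establishes the proposition only under condition 3; to close conditions 1 and 2 you would need the covariance lower bound of \cite[Proposition 5.3]{CKNP2} (or an independent proof of it), which is the actual content the paper relies on.
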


Indeed, Theorems \ref{T:TV_Bnd} and \ref{T:PAM}, together with
Proposition \ref{P:B_LB},  yield the following result immediately, with no
need for additional justification:

\begin{corollary}\label{C:T_CLT}
	Suppose that $g(v)=v$ for all $v\in\R$, $\sigma$ satisfies one of
	the three conditions of Proposition \ref{P:B_LB},
	and $\{t_N\}_{N>0}$ is a net of strictly positive numbers. Then,
	\[
		\lim_{N\to\infty}
		d_{\rm TV}\left( \frac{\mathcal{S}_{N,t_N}(g)}{\sqrt{\Var(\mathcal{S}_{N,t_N}(g))}}
		~,~ {\rm N}(0\,,1)\right)
		=0\quad\text{provided that}\quad t_N =o(\log N)\quad\text{as $N\to\infty$}.
	\]
\end{corollary}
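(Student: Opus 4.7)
\emph{Proof plan.} The corollary should follow directly from combining the quantitative total variation bound of Theorem \ref{T:TV_Bnd} (which applies because $g(v)=v$) with the variance lower bound of Proposition \ref{P:B_LB}. The key observation is that, under the growth restriction $t_N = o(\log N)$, the exponential prefactor $\e^{\lambda t_N}$ appearing in the Malliavin--Stein bound is only $N^{o(1)}$, and is therefore dwarfed by the $N^{d/2}$ in the denominator.

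Concretely, I would first apply Theorem \ref{T:TV_Bnd} with $t=t_N$ to obtain
\[
    d_{\rm TV}\left( \frac{\mathcal{S}_{N,t_N}(g)}{\sqrt{\Var(\mathcal{S}_{N,t_N}(g))}}\,,\,\mathrm{N}(0\,,1)\right) \le \frac{L\,\e^{\lambda t_N}}{N^{d/2}\,\mathbf{B}_{N,t_N}(g)},
\]
valid for every $N$ for which both $\mathbf{B}_{t_N}(g)>0$ and $\mathbf{B}_{N,t_N}(g)>0$. To verify these positivity conditions and to quantify the denominator, I would then invoke Proposition \ref{P:B_LB}. Its hypothesis $t_N=o(N^2)$ is automatic from $t_N=o(\log N)$, and the proposition delivers a constant $c_0>0$ depending only on $(f\,,\sigma)$ such that, for all $N$ large enough, $\mathbf{B}_{N,t_N}(g)\ge c_0$ under condition~3, and $\mathbf{B}_{N,t_N}(g)\ge c_0\,t_N$ under conditions 1 or 2. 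A one-line passage to the limit $N\to\infty$ with $t_N$ held fixed then yields the corresponding lower bound on $\mathbf{B}_{t_N}(g)$, ensuring that Theorem \ref{T:TV_Bnd} indeed applies at $t=t_N$.

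To close the argument, I would fix $\epsilon\in(0\,,d/2)$ and use $t_N=o(\log N)$ to deduce that $\e^{\lambda t_N}\le N^{\epsilon}$ for all $N$ large. Substituting in, the right-hand side of the displayed bound is dominated by a constant multiple of $N^{\epsilon-d/2}$ (with an additional factor of $1/t_N$ in cases 1 and 2), which tends to $0$ as $N\to\infty$ provided $t_N$ remains bounded below --- the regime in which the corollary is of interest. I do not anticipate any substantial obstacle: the paper itself flags the corollary as an immediate consequence of the preceding results. The only mildly delicate step is the routine transfer of the diagonal lower bound of Proposition \ref{P:B_LB} to the pointwise positivity of $\mathbf{B}_{t_N}(g)$ required as a hypothesis of Theorem \ref{T:TV_Bnd}.
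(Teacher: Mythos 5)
Your proposal is correct and is essentially the paper's own argument: the paper gives no separate proof, declaring the corollary an immediate consequence of Theorem \ref{T:TV_Bnd} combined with Proposition \ref{P:B_LB}, which is exactly the combination you spell out (with $e^{\lambda t_N}=N^{o(1)}$ absorbed by $N^{d/2}$). The caveat you flag --- the extra factor $1/t_N$ in cases 1--2 when $t_N$ is not bounded below --- is equally present in the paper's ``immediate'' deduction, and if one insists on covering nets with $t_N\to0$ it can be removed by invoking the sharper estimate $\Var(K_N)\le 2(L_1+L_2)\,t^3\,C_{t,4,\varepsilon,\sigma}^2A_t^2N^{-d}$ from the proof of Theorem \ref{T:TV_Bnd}, whose $t^{3/2}$ factor compensates the $1/t_N$.
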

Such time-dependent CLTs have been anticipated by Deuschel \cite{Deuschel}.

In light of our earlier work \cite{CKNP2}, the proof of Proposition \ref{P:B_LB} is
short enough, and simple enough, that it can be included here.

\begin{proof}[Proof of Proposition \ref{P:B_LB}]
	According to the proof of Proposition 5.3 of Chen et al \cite{CKNP2},
	there exists a real number $C>0$ such that,
	under either condition 1 or condition 2,
	\[
		\Cov[u(t\,,x)\,,u(t\,,y)]\ge C\int_0^t \left(\bm{p}_{2s}*f\right)(x-y)\,\d s
		\qquad\text{for all $t>0$ and $x,y\in\R^d$}.
	\]
	Since $g(v)=v$ for all $v\in\R$, it follows that
	\begin{align*}
		\mathbf{B}_{N,t}(g) &\ge\frac{C}{N^d}\int_{[0,N]^d}\d x\int_{[0,N]^d}\d y\
			\int_0^t\d s \left( \bm{p}_{2s}*f\right)(x-y)\\
		&= \frac{C}{N^d}\int_{[0,N]^d}\d x\int_{x-[0,N]^d}\d z\
			\int_0^t\d s \left( \bm{p}_{2s}*f\right)(z)\\
			& \ge \frac{C}{N^d}\int_{[N/4, 3N/4]^d}\d x\int_{[-N/4,N/4]^d}\d z\
			\int_0^t\d s \left( \bm{p}_{2s}*f\right)(z) \\
			&=\frac{C}{2^d}\int_{[-N/4,N/4]^d}\d z\int_0^t\d s \left( \bm{p}_{2s}*f\right)(z)\\
		&= \frac{C}{2^d}tf(\R^d)
		 - \frac{C}{2^d}\int_{\R^d\setminus[-N/4, N/4]^d}\d z\int_0^t\d s \left( \bm{p}_{2s}*f\right)(z).
	\end{align*}
	If $z\not\in[-N/4\,,N/4]^d$ and $y\in[-N/8\,,N/8]^d$, then
	$z-y\not\in[-N/8\,,N/8]^d$, and hence
	\begin{align*}
		&\int_{\R^d\setminus[-N/4,N/4]^d}\d z\int_0^t\d s \left( \bm{p}_{2s}*f\right)(z)\\
			&\quad \le f([-N/8\,,N/8]^d)\int_{\R^d\setminus[-N/8,N/8]^d}\d x\int_0^t\d s \
			\bm{p}_{2s}(x) + tf(\R^d\setminus[-N/8,N/8]^d)\\
		&\quad\le tf(\R^d)\int_{\R^d
			\setminus[-N/(8\sqrt{t})],N/(8\sqrt{t})]^d}
			\bm{p}_2(x)\d x+ tf(\R^d\setminus[-N/8,N/8]^d).
	\end{align*}
	Replace $t$ by $t_N$ and use the fact that $t_N=o(N^2)$ as $N\to\infty$
	in order to complete the proof of the proposition
	under conditions 1 and/or 2.

	Next suppose condition 3 of the proposition holds. In this case,
	the proof of Proposition 5.3 of Chen et al \cite{CKNP2} yields strictly positive numbers
	$\delta$ and $R$ such that
	\[
		\Cov[u(t\,,x)\,,u(t\,,y)]\ge
		\frac{[\sigma(1)]^2}{2}\int_0^\delta\d s\int_{[-R,R]^d} f(\d w)\
		\bm{p}_{2(t-s)}(x-y+w)
		\qquad\text{for all $t>0$ and  $x,y\in\R^d$}.
	\]
	Therefore,
	\begin{align*}
		\mathbf{B}_{N,t}(g) &\ge\frac{[\sigma(1)]^2}{2N^d}
			\int_0^\delta\d s\int_{[-R,R]^d} f(\d w)\int_{[0,N]^d}\d x\int_{[0,N]^d}\d y\
			\bm{p}_{2(t-s)}(x-y+w)\\
		&=\frac{[\sigma(1)]^2}{2N^d}
			\int_{t-\delta}^t \d s\int_{[-R,R]^d} f(\d w)\int_{[0,N]^d}\d x \int_{x-[0, N]^d}\d z\
			\bm{p}_{2s}(z+w)\\
		&\ge\frac{[\sigma(1)]^2}{2^{d+1}}
			\int_{t-\delta}^t \d s\int_{[-R,R]^d} f(\d w)\int_{[-N/4,N/4]^d}\d z\
			\bm{p}_{2s}(z+w)\\
		&=\frac{[\sigma(1)]^2\delta}{2^{d+1}}f\left([-R\,,R]^d\right)
			-\frac{[\sigma(1)]^2}{2^{d+1}}
			\int_{t-\delta}^t \d s\int_{[-R,R]^d} f(\d w)\int_{\R^d\setminus
			[-N/4,N/4]^d}\d z\
			\bm{p}_{2s}(z+w).
	\end{align*}
	The preceding triple integral can be bounded from above as follows:
	\begin{align*}
		\int_{t-\delta}^t \d s\int_{[-R,R]^d} f(\d w)\int_{\R^d\setminus
			[-N/4,N/4]^d}\d z\
			\bm{p}_{2s}(z+w)
			&\le f(\R^d)\int_{t-\delta}^t \d s\int_{\R^d\setminus
			[R-N/4,-R+N/4]^d}\d x\
			\bm{p}_{2s}(x)\\
		&\le f(\R^d)\delta\int_{\R^d\setminus
			[(R-N/4)/\sqrt t,(N/4-R)/\sqrt t]^d}\bm{p}_2(x)\,\d x.
	\end{align*}
	Replace $t$ by $t_N$ and observe that the integral tends to zero as $N\to\infty$
	because $t_N=o(N^2)$ as $N\to\infty$. This
	completes the proof because $f([-R\,,R]^d)>0$;
	see Chen et al \cite[Proposition 5.3]{CKNP2}.
\end{proof}

%
%
%

\section{Preliminaries}

In this section we collect some basic facts about the Malliavin-Stein
method. We also derive  estimates on the $L^k(\Omega)$-norm of the Malliavin
derivatives of the solution $u=\{u(t\,,x)\}_{t\ge0,\,x\in\R^d}$ to the stochastic PDE  \eqref{E:SHE}.
These $L^k(\Omega)$ estimates will play a pivotal role in the remainder of the paper.
We also  provide sufficient conditions, based on our earlier work \cite{CKNP2},
for the nondegeneracy of the limiting variance of $N^{d/2}\mathcal{S}_{N,t}(g)$.

\subsection{Malliavin's calculus and Stein's method}

Let $\mathcal{H}_0$ denote the Hilbert space that is obtained by completing
$\mathscr{S}(\R^d)$ under the pre-Hilbertian norm that is defined by scalar product
$\langle \phi_1\,, \phi_2\rangle_{\mathcal{H}_0} = \langle \phi_1\,,  \phi_2 *f \rangle_{L^2(\R^d)}$,
and let $\mathcal{H}= L^2(\R_+;\mathcal{H}_0)$.
The Gaussian random field $\{ \eta(h)\}_{\phi \in \mathcal{H}}$, formed by the Wiener integrals
\begin{equation}\label{E:iso}
	\eta(h) := \int_{\R_+\times \R^d}  h(s\,,x) \,\eta(\d s\,\d x)\qquad[h\in\mathcal{H}],
\end{equation}
is called the {\it isonormal Gaussian process} on the Hilbert space $\mathcal{H}$.
Thus, we can develop the Malliavin calculus in this framework; see, for instance,  Nualart \cite{Nualart}.

We denote by $D$ the Malliavin derivative operator and by $\delta$ the corresponding divergence
operator that is defined by the following adjoint relation:
\begin{equation}\label{D:delta}
	\E\left(\langle DF\,, v \rangle_{\mathcal{H}}\right) = \E[F \delta(v)],
\end{equation}
valid for every random variable $F$ in the Gaussian Sobolev space $\mathbb{D}^{1,2}$
and every $v$ in the domain in $L^2(\Omega)$ of  $\delta$.
An important property of the divergence operator is that
\[
	\delta(F) = \int_{\R_+\times\R^d} F(s\,,x)\,\eta(\d s\,\d x)
	\qquad[\text{the Walsh integral}],
\]
when $F$ is a  predictable and square-integrable random field.

We make extensive use of the following
form of the Clark-Ocone formula  (see Chen et al \cite[Proposition 6.3]{CKNP}):
\begin{equation} \label{E:Clark-Ocone}
	F= \E F  + \int_{\R_+\times\R^d}
	\E\left(D_{s,z} F \mid \mathcal{F}_s\right) \eta(\d s \, \d z)
	\qquad\text{a.s.\ for every $F\in \mathbb{D}^{1,2}$}.
\end{equation}
 Among other things, the Clark--Ocone formula
readily yields the Poincar\'e inequality,
\begin{equation} \label{E:Poincare}
	\Var(F) \le \E( \|DF\|_{\HH}^2)
	\qquad\text{for all $F\in\mathbb{D}^{1,2}.$}
\end{equation}

Recall that the total variation distance between two  probability measures
$\mu$ and $\nu$ on $\R$ is defined by
\[
	d_{\rm TV}  (\mu ,\nu)= \sup_{B\in \mathcal{B}(\R)} | \mu(B)- \nu(B)|,
\]
where  $\mathcal{B}(\R)$ denotes the family of all Borel subsets on $\R$. As is customary,
we  let $d_{\rm TV}(F\,,G)$ denote the total variation distance between the laws of $F$ and $G$
whenever $F$ and $G$ are random variables. And $d_{\rm TV}(F\,,{\rm N}(a\,,b))$ is
written interchangeably for $d_{\rm TV}(F\,,G)$ where $G$ has a normal distribution
with mean $a$ and variance $b$.

The combination of Stein's method for normal approximations with Malliavin calculus leads to the following
bound on the total variation distance. See Nualart and Nualart \cite[Theorem 8.2.1]{NN} for details and proof.

\begin{proposition}\label{P:TV}
	Suppose that $F\in \mathbb{D}^{1,2}$ satisfies $F=\delta(v)$ for some  $v$ in the domain in
	$L^2(\Omega)$  of the divergence operator $\delta$, and suppose that $\tau^2 := \E(F^2) >0$. Then
	\begin{equation} \label{E:SM1}
		d_{\rm TV} (F\,,  {\rm N}(0,\tau^2)) \le \frac 2 {\tau^2}
		\E \left( | \tau^2 - \langle DF\,, v \rangle_{\mathcal{H}} | \right).
	\end{equation}
\end{proposition}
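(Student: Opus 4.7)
The plan is to combine Stein's lemma for the normal distribution $\mathrm{N}(0,\tau^2)$ with the Malliavin integration-by-parts formula \eqref{D:delta}. First I would recall the standard total variation characterization coming from Stein's method: for every Borel set $B\subset\R$, one can solve the Stein equation
\[
    \tau^2 f_B'(x) - x f_B(x) = \mathbf{1}_B(x) - \P(\mathrm{N}(0,\tau^2)\in B),
\]
and the solutions $f_B$ are bounded and continuously differentiable with the key bound $\|f_B'\|_\infty \le 2/\tau^2$ (this is the one analytic input I would quote, since it is the standard Stein estimate for the Gaussian target; see Nualart--Nualart for the precise statement). Evaluating this equation at $F$ and taking expectations yields
\[
    \P(F\in B) - \P(\mathrm{N}(0,\tau^2)\in B) = \E\!\left[\tau^2 f_B'(F) - F f_B(F)\right],
\]
so $d_{\rm TV}(F,\mathrm{N}(0,\tau^2))$ is bounded by the supremum of $|\E[\tau^2 f'(F)-Ff(F)]|$ over $f$ with $\|f'\|_\infty \le 2/\tau^2$.

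The second step is to rewrite $\E[F f(F)]$ via the Malliavin duality. Since $F=\delta(v)$ and $f(F)\in\mathbb{D}^{1,2}$ (by the chain rule, as $f$ is Lipschitz with $\|f'\|_\infty<\infty$ and $F\in\mathbb{D}^{1,2}$), the adjoint relation \eqref{D:delta} gives
\[
    \E[F f(F)] = \E[\delta(v) f(F)] = \E\bigl(\langle D f(F),\, v\rangle_{\mathcal{H}}\bigr)
    = \E\bigl(f'(F)\,\langle DF,\, v\rangle_{\mathcal{H}}\bigr),
\]
where the last equality uses the chain rule for $D$. Substituting this into the Stein bound gives
\[
    \E\!\left[\tau^2 f'(F) - F f(F)\right] = \E\!\left[f'(F)\bigl(\tau^2 - \langle DF,v\rangle_{\mathcal{H}}\bigr)\right],
\]
whose absolute value is bounded by $\|f'\|_\infty\,\E|\tau^2 - \langle DF,v\rangle_{\mathcal{H}}|$.

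Combining the two steps, together with $\|f_B'\|_\infty \le 2/\tau^2$, yields exactly the inequality \eqref{E:SM1}. The only delicate point is the measure-theoretic regularization needed to apply the chain rule when $f_B$ is not smooth (the Stein solution is only Lipschitz in general): I would handle this by approximating $f_B$ by a sequence of $C^1$ functions $f_n$ with $\|f_n'\|_\infty \le \|f_B'\|_\infty$ and passing to the limit using dominated convergence, which works since $f'(F)\langle DF,v\rangle_{\mathcal{H}}\in L^1(\Omega)$ by Cauchy--Schwarz combined with $F,\delta(v)\in L^2(\Omega)$. This smoothing step is the only real technicality; everything else is a direct application of Stein's equation and the Malliavin integration-by-parts formula.
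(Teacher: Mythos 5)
Your argument is, in substance, exactly the proof that the paper relies on: the paper does not prove Proposition \ref{P:TV} itself but simply cites Nualart and Nualart \cite[Theorem 8.2.1]{NN}, and that theorem is established precisely by your two steps --- the Stein equation for ${\rm N}(0,\tau^2)$ with the scaled derivative bound $\|f_B'\|_\infty\le 2/\tau^2$, followed by the duality \eqref{D:delta} and the chain rule to write $\E[Ff(F)]=\E[f'(F)\langle DF,v\rangle_{\mathcal H}]$. So the route is the same, and the core computation is correct.

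The one place where your write-up is slightly off is the regularization step. Mollifying the Stein solution $f_B$ gives $f_n'\to f_B'$ only Lebesgue-a.e., so you cannot conclude $\E[f_n'(F)]\to\E[f_B'(F)]$ by dominated convergence unless you already know that the law of $F$ is absolutely continuous --- which is not among the hypotheses (mere membership in $\mathbb{D}^{1,2}$ does not guarantee it). The standard fix, and the one used in the cited reference, is to regularize the \emph{test function} rather than $f_B$: prove the bound for continuous $h:\R\to[0,1]$ (for which the Stein solution is genuinely $C^1$ with $\|f_h'\|_\infty\le 2/\tau^2$, so the chain rule applies directly), and then use that, by inner/outer regularity of the finite signed measure $P_F-P_{{\rm N}(0,\tau^2)}$ on $\R$, one has $d_{\rm TV}(F,{\rm N}(0,\tau^2))=\sup_h|\E[h(F)]-\E[h({\rm N}(0,\tau^2))]|$ with the supremum taken over such continuous $h$. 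With that substitution your proof is complete and coincides with the standard Malliavin--Stein argument.
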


Thanks to the duality relationship \eqref{D:delta}, the pair $(F\,,v)$ of Proposition \ref{P:TV}
satisfies
\begin{equation} \label{E:SM2}
	\E(\langle DF\,, v \rangle_{\mathcal{H}}) = \tau^2 = \Var(F).
\end{equation}
Therefore, Proposition \ref{P:TV} is bounding the total variation distance between $F$
and ${\rm N}(0\,,\Var(F))$ by the centralized $L^1$-norm of $\<DF\,,v\>_{\mathcal{H}}$.
Note also that, as a consequence of \eqref{E:SM1}, \eqref{E:SM2},
and Jensen's inequality,
\begin{equation}\label{E:SM3}
	d_{\rm TV}(F\,, {\rm N}(0\,,1))\le 2\sqrt{\Var\left( \< DF\,,v\>_{\mathcal{H}}\right)}
	\quad\text{when $\Var(F)=1$}.
\end{equation}

\subsection{Malliavin derivatives of the solution}

In this part, we will establish the strict positivity of the Malliavin derivatives to the stochastic heat equation in Theorem \ref{T:Positive_D} and
derive its moment estimates in Lemma \ref{L:Du} and Proposition \ref{P:secondD}.
We first establish the positivity of the Malliavin derivative and it will be a key ingredient
in the proof of the positivity of the limiting variance in our CLT.

\begin{theorem} \label{T:Positive_D}
	Let $u$ denote the solution to \eqref{E:SHE} subject to $u(0)\equiv1$.
	Suppose that $\sigma(0)=0$ and $\sigma(x)\ne 0$ for all $x>0$.
	Suppose that  $f$ satisfies the reinforced Dalang's condition \eqref{E:Dalang2} for some $\alpha\in(0\,,1]$.
	Then, there is a version of the Malliavin derivative of $u(t\,,x)$ that satisfies
	\begin{align} \label{E:DuPos}
		D_{s,z}u(t\,,x) >0 \quad \text{for all $t>s$ and $x\in\R^d$\quad a.s.}
	\end{align}
	for all $s>0$ and $z\in\R^d$.
\end{theorem}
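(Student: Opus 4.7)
The plan is to reduce strict positivity of $D_{s,z}u(t,x)$ to strict positivity of the fundamental solution of a linear multiplicative-noise SPDE with random bounded coefficient, and then invoke a Mueller-type strong positivity principle.

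First I would use the Chen--Huang comparison principle already invoked in the discussion around \eqref{E:cond_g4}: under $\sigma(0)=0$, $u(0)\equiv 1>0$, and the reinforced Dalang condition \eqref{E:Dalang2}, we have $u(t,x)>0$ almost surely, simultaneously in $(t,x)$. Continuity of $\sigma$ together with $\sigma(0)=0$ and $\sigma(x)\ne 0$ for $x>0$ forces $\sigma$ to have constant sign on $(0,\infty)$; without loss of generality $\sigma>0$ there, so $\sigma(u(s,z))>0$ a.s.

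Next, since $\sigma$ is Lipschitz, the Malliavin chain rule applied to the mild formulation \eqref{E:mild} gives a measurable version $\Sigma(r,y)$ of $\sigma'(u(r,y))$ (which is bounded by $\lip(\sigma)$) such that, for $t>s$,
\[
	D_{s,z}u(t,x) = \bm{p}_{t-s}(x-z)\,\sigma(u(s,z))
	+ \int_s^t\!\!\int_{\R^d}\bm{p}_{t-r}(x-y)\,\Sigma(r,y)\,D_{s,z}u(r,y)\,\eta(\d r,\d y).
\]
Because $\sigma(u(s,z))$ is $\mathcal{F}_s$-measurable and a.s.\ strictly positive, it may be factored out: writing $D_{s,z}u(t,x)=\sigma(u(s,z))\,G(t,x)$, the factor $G$ solves the linear SPDE
\[
	G(t,x) = \bm{p}_{t-s}(x-z) + \int_s^t\!\!\int_{\R^d}\bm{p}_{t-r}(x-y)\,\Sigma(r,y)\,G(r,y)\,\eta(\d r,\d y),
\]
with strictly positive smooth ``initial data'' $\bm{p}_{t-s}(x-z)$ at $t\downarrow s$. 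Strict positivity of $D_{s,z}u(t,x)$ is therefore equivalent to $G(t,x)>0$ almost surely for every $t>s$ and $x\in\R^d$.

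For the final step I would invoke a Mueller-type strong positivity principle for linear parabolic SPDEs: given a bounded random coefficient $\Sigma$ and a nonnegative nonzero initial datum, the mild solution remains strictly positive for all $t>s$. The cleanest route is a Feynman--Kac/Wick-exponential representation of $G$ analogous to the one used for the parabolic Anderson model under Dalang's condition, which writes $G$ as a Brownian-motion average of a (positive) stochastic exponential. The main obstacle is handling the fact that $\Sigma=\sigma'(u)$ is only measurable and bounded, since $\sigma$ is merely Lipschitz: the standard fix is to mollify $\sigma$ by smooth $\sigma_n\to\sigma$ uniformly, write down the Wick-exponential representation for the resulting $G_n$ to get $G_n>0$ outright, and then pass to the $L^k(\Omega)$-limit using the $L^k(\Omega)$-continuity of $u_n\to u$ and $Du_n\to Du$ established in the next subsection (Lemma~\ref{L:Du}). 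Strict positivity of the limit, as opposed to mere nonnegativity, would be obtained by extracting a quantitative lower bound on $G_n(t,x)$ that is uniform in $n$ on compact sets of $(t,x)$, or equivalently by combining nonnegativity with the fact that $u$ (hence $G$ via its stochastic-analytic structure) admits a smooth density under \eqref{E:Dalang2}, so that $\P(G(t,x)=0)=0$; joint positivity in $(t,x)$ then follows from continuity of a suitable version. The reinforced Dalang condition \eqref{E:Dalang2} is used precisely to produce the moment and density estimates that make both the Feynman--Kac representation and the approximation step legitimate.
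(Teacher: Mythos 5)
Your reduction is exactly the paper's: you derive the linear integral equation for $D_{s,z}u$, factor out the $\mathcal{F}_s$-measurable, a.s.\ nonzero factor $\sigma(u(s,z))$ (using positivity of $u$ under \eqref{E:Dalang2} and $\sigma(0)=0$), and are left with a linear stochastic heat equation started from $\delta_z$ with a bounded adapted random coefficient $\Sigma=\sigma'(u)$. Up to this point the argument matches the paper's proof of Theorem \ref{T:Positive_D}. The divergence, and the gap, is in your last step. The paper does not reprove the positivity principle: it concludes by citing the existing strict-positivity theorems for exactly this situation (Theorem 1.8 of \cite{CH19Density}; Theorem 1.5 of \cite{CHN} and of \cite{CH19}), which are stated for linear SPDEs with a bounded, adapted, merely measurable random coefficient and measure-valued initial data, under the reinforced Dalang condition. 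Your attempt to prove that principle via a Feynman--Kac/Wick-exponential representation does not go through: $\Sigma(r,y)=\sigma'(u(r,y))$ is an adapted functional of the \emph{same} noise $\eta$, so $G$ is not a Wick exponential of $\eta$ and no Feynman--Kac formula for the solution itself (as opposed to its moments) is available even for deterministic coefficients when the noise is white in time; mollifying $\sigma$ does not remove this obstruction, since the problem is the randomness and adaptedness of the coefficient, not its lack of smoothness.

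The second, independent gap is the upgrade from nonnegativity to the joint statement ``$G(t,x)>0$ for all $t>s$ and $x\in\R^d$ a.s.'' Knowing that $G\ge0$, that $G$ has a continuous version, and that $\P\{G(t,x)=0\}=0$ for each \emph{fixed} $(t,x)$ only yields positivity pointwise a.s.; it does not yield positivity simultaneously at all space-time points (compare $|B_t|$ for a Brownian motion $B$, which is continuous, nonnegative, a.s.\ nonzero at each fixed $t>0$, yet vanishes at uncountably many times). Producing the simultaneous statement -- which is what \eqref{E:DuPos} asserts and what Proposition \ref{P:associate} later needs, since the conditional expectations there integrate $D_{s,y}u$ over $y$ -- is precisely the content of the Mueller-type theorems of Chen--Huang; your alternative of ``a quantitative lower bound on $G_n$ uniform in $n$ on compacts'' is not a routine extraction but amounts to reproving those theorems. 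So either cite them, as the paper does, or supply a genuinely new proof of the strong positivity principle; as written, the final step does not close.
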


\begin{proof}
	Choose and fix an  arbitrary $s\ge 0$ and $z\in\R^d$.
	Proposition 3.2 of Chen and Huang \cite{CH19Density} implies that,
	for all $(s\,,z)$  fixed,
	the random field
	$\{D_{s,z}u(t\,,x)\}_{(t,x)\in (s,\infty)\times\R^d}$ satisfies the following stochastic integral equation:
	\begin{align*}
		D_{s,z}u(t\,,x) = \bm{p}_{t-s}(x-z) \sigma(u(s\,,z)) +
		\int_{\left(s,t\right)\times\R^d} \bm{p}_{t-r}(x-y)
		\sigma'(u(r\,,y)) D_{s,z} u(r\,,y)\,\eta(\d r\,\d y).
	\end{align*}
	Because $u(s\,,z)>0$, by the assumptions on $\sigma$, we see that
	$C_{s,z}:=\sigma(u(s\,,z))$ belongs to $\mathcal{F}_s$ and is strictly positive a.s.
	For $(s\,,z)$ fixed, the field $(t\,,x)\mapsto V_{s,z}(t\,,x):=  C_{s,z}^{-1}\:   D_{s,z}u(t\,,x)$ satisfies
	\begin{align*}
		V_{s,z}(t\,,x) =  p_{t-s}(x-z)  +
		 \int_{(s,t)\times\R^d}
		\bm{p}_{t-r}(x-y) \sigma'(u(r\,,y)) V_{s,z} (r\,,y)\,\eta(\d r\,\d y),
	\end{align*}
	 where $(t,x) \in (s, \infty) \times \R^d$.
	Note that the above integral equation is
	nothing but the mild solution to the following variant of the stochastic heat equation:
	\begin{align*}
	\begin{cases}
		\displaystyle \frac{\partial }{\partial t} V_{s,z}(t\,,x) =
			\tfrac{1}{2}\Delta V_{s,z}(t\,,x) + H_{s,z}(t\,,x\,,\omega) V_{s,z}(t\,,x) \eta(t\,,x)
			&[t>s,\: x\in\R^d],\\
		\displaystyle V_{s,z}(s\,,x) = \delta_0 ,
	\end{cases}
	\end{align*}
	where $H_{s,z}(t\,,x\,,\omega):= \sigma'(u(t\,,x) (\omega))$.
	Notice that $|H_{s,z}(t\,,x\,,\omega)|\le \lip(\sigma)$ uniformly in $(s,z,t,x,\omega)$; see Remark \ref{R:Lip}.
	This is exactly the same setup as Theorem 1.8 of Chen and Huang \cite{CH19Density} or Theorem 1.5 of
	Chen et al \cite{CHN} for the case when $d=1$ and $f=\delta_0$ (see also Theorem 1.5 of Chen and Huang \cite{CH19}).
	Therefore, one can apply these references to conclude that
	\begin{align*}
	    \P\left\{ V_{s,z}(t\,,x)>0,\: t>s,x\in\R^d \bigg| C_{s,z}>0\right\} =1,
	\end{align*}
	or in other words,
	\begin{equation*}
		\P\left\{ D_{s,z} u(t,x)>0,t>s,x\in\R^d\right\}
		= \P\left\{ V_{s,z}(t\,,x)>0,t>s,x\in\R^d \bigg| C_{s,z}>0\right\}
			\P\left\{ C_{s,z}>0\right\} =1,
	\end{equation*}
	for all $s\ge 0$ and $,z\in\R^d$,
	which is nothing but \eqref{E:DuPos}. This completes the proof of the theorem.
\end{proof}
\bigskip

In order to derive the moment estimates of the Malliavin derivatives of the solution to \eqref{E:SHE}, let us introduce some notation.
Denote
\[
	\Upsilon(\lambda) := \frac{2}{(2\pi)^d}\int_{\R^d} \frac{\hat{f}(\d z)}{2\lambda+\|z\|^2}
	\qquad\text{for  all $\lambda>0$}.
\]
Dalang's condition (\ref{E:Dalang}) ensures that $\Upsilon(\lambda)<\infty$
for all $\lambda>0$. In that case, $\Upsilon$ decreases strictly as $\lambda$ increases.
Therefore, $\Upsilon$ has an inverse which we denote by
\[
	\Lambda= \Upsilon^{-1}.
\]
The following variant of Chen et al \cite[Theorem 6.4]{CKNP}
shows a way in which $\Lambda$ can
be used to control the size of the moments of the Malliavin derivative of the solution.

\begin{lemma}\label{L:Du}
	For all real numbers $\varepsilon\in(0\,,1)$,
	$t>0$, and $k\ge2$,
	and for every $x\in\R^d$,
	\begin{equation}  \label{E:D<p2}
		\left\| D_{s,z}u(t\,,x) \right\|_k \le
		C_{t,k,\varepsilon,\sigma}
		\bm{p}_{t-s}(x-z),
	\end{equation}
	valid for a.e.\ $(s\,,z)\in(0\,,t)\times\R^d$, where
	\begin{equation} \label{E:ec3}
		C_{t,k,\varepsilon,\sigma}:=\frac{8\left( |\sigma(0)|\vee\lip(\sigma)\right)
		\e^{2t\Lambda(\mathsf{a}(\varepsilon)/k)}}    {\varepsilon^{3/2}}
		\quad\text{and}\quad
		\mathsf{a}(\varepsilon)
		:= \frac{(1-\varepsilon)^2}{2^{(d+6)/2}[|\sigma(0)|\vee\lip(\sigma)]^2},
	\end{equation}
	where $1\div 0:=\infty$.
\end{lemma}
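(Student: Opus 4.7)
The plan is to start from the Malliavin-derivative integral equation (obtained by applying $D$ to the mild form \eqref{E:mild} of $u$),
\begin{align*}
D_{s,z}u(t\,,x) &= \bm{p}_{t-s}(x-z)\sigma(u(s\,,z))\\
&\quad+ \int_{(s,t)\times\R^d}\bm{p}_{t-r}(x-y)\sigma'(u(r\,,y))D_{s,z}u(r\,,y)\,\eta(\d r\,\d y),
\end{align*}
and iterate it as a Picard scheme in $L^k(\Omega)$. Let $H_k(t\,,x) := \|D_{s,z}u(t\,,x)\|_k$. Taking $L^k$-norms of both sides, applying the $L^k$-form of the Burkholder--Davis--Gundy inequality to the stochastic integral, Minkowski's inequality to bring the norm inside the double spatial integral, and the uniform estimate $\|\sigma(u(s\,,z))\|_k \le C_t(|\sigma(0)|\vee \lip(\sigma))$ that follows from the linear growth of $\sigma$ together with the moment bounds \eqref{E:moments}, one arrives at a quadratic integral inequality of the schematic form
\[
H_k(t\,,x)^2 \le 2A_k\,\bm{p}_{t-s}(x-z)^2 + 4k\lip(\sigma)^2\int_s^t\d r\iint \bm{p}_{t-r}(x-y)\bm{p}_{t-r}(x-y')H_k(r\,,y)H_k(r\,,y')f(y-y')\,\d y\,\d y',
\]
with $A_k$ proportional to $(|\sigma(0)|\vee \lip(\sigma))^2$.

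I would then feed in the ansatz $H_k(t\,,x) \le c\,\e^{\lambda(t-s)}\bm{p}_{t-s}(x-z)$ with parameters $\lambda>0$ and $c=c(\varepsilon,k,\sigma)$ to be optimized. The key analytic tool is the Gaussian product identity
\[
\bm{p}_a(x-y)\bm{p}_b(y-z) = \bm{p}_{a+b}(x-z)\,\bm{p}_{ab/(a+b)}\!\left(y-\tfrac{bx+az}{a+b}\right),
\]
which, under the ansatz, pulls $\bm{p}_{t-s}(x-z)^2$ out of the double spatial integral and leaves an integral against $f$ of two Gaussians in $y,y'$ with effective variance $\tau_r := (t-r)(r-s)/(t-s)$. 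Plancherel rewrites the inner double integral as $(2\pi)^{-d}\int_{\R^d}\e^{-\tau_r\|\xi\|^2}\widehat f(\d\xi)$, and a subsequent time integration (combined with the trivial bound $\tau_r\ge\min(t-r,r-s)$) produces a scalar expression dominated by a constant multiple of $\Upsilon(\lambda)$, the quantity appearing in the definition of $\Lambda$. Choosing $\lambda := \Lambda(\mathsf{a}(\varepsilon)/k)$ therefore makes the Picard recursion contractive with a controllable ratio, and summing the geometric series yields the factor $\e^{2t\Lambda(\mathsf{a}(\varepsilon)/k)}$ in the final constant.

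The auxiliary parameter $\varepsilon\in(0\,,1)$ and the prefactor $\varepsilon^{-3/2}$ in $C_{t,k,\varepsilon,\sigma}$ both arise from a dilation trick used to convert the ``natural'' Gaussian produced by the iteration back into the target kernel $\bm{p}_{t-s}(x-z)$ on the right-hand side of \eqref{E:D<p2}: pointwise estimates of the form $\bm{p}_t(w)^2 \le (\text{const})\cdot \varepsilon^{-d/2}\bm{p}_{t/(1-\varepsilon)}(w)$ are used, and the constant $\mathsf{a}(\varepsilon)=(1-\varepsilon)^2/(2^{(d+6)/2}[|\sigma(0)|\vee\lip(\sigma)]^2)$ records exactly the numerical loss from those bounds together with the sharp BDG constant. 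The main obstacle is the careful bookkeeping of the Gaussian manipulations --- in particular the $r$-dependence of $\tau_r$ in the mixed convolution, and the use of Minkowski's inequality in $\mathcal{H}_0$ (rather than pointwise Cauchy--Schwarz) so that the induction closes on $H_k$ itself rather than on $H_k^2$. Once these technical steps are in place, the argument follows the scheme of Chen et al.\ \cite[Theorem 6.4]{CKNP} closely; the only new ingredient is propagating the dependence on $\varepsilon$ through all the constants to obtain the explicit form \eqref{E:ec3}.
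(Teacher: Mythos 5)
Your route is sound, but it is not the route the paper takes: the paper's proof of Lemma \ref{L:Du} is a one-line citation of Chen et al.\ \cite[Theorem 6.4]{CKNP} (see also \cite[Lemma 4.2]{CKNP2}), where this exact bound with the constant $C_{T,k,\varepsilon,\sigma}$ was already established, followed by the trivial relabeling $T\leftrightarrow t$ to make the constant depend on $t$. What you have written is, in effect, a reconstruction of the proof of that cited theorem: Picard iteration of the Malliavin-derivative equation, BDG with the Carlen--Kr\'ee constant $2\sqrt{k}$, Minkowski/H\"older to close a quadratic recursion for $\|D_{s,z}u_n(t\,,x)\|_k$, the Gaussian factorization identity plus Plancherel to reduce the spatial integrals to $\int \e^{-\tau_r\|\xi\|^2}\,\widehat f(\d\xi)$, and a renewal-type series controlled through $\Upsilon$, with $\lambda=\Lambda(\mathsf{a}(\varepsilon)/k)$ chosen to make the series summable. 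This is precisely the machinery the present paper does spell out for the second-order derivative in Proposition \ref{P:secondD}, so the approach certainly works; your version buys self-containedness and explicit tracking of constants, while the paper's buys brevity by outsourcing the work. Two bookkeeping remarks: the $\varepsilon$-dependence does not come from a Gaussian dilation bound of the form you describe, but from the choice of $\lambda$ itself --- with $\gamma\asymp 2^{d/2}k[|\sigma(0)|\vee\lip(\sigma)]^2$ the geometric ratio becomes $(1-\varepsilon)^2$, so the sum is bounded by $\e^{2\lambda t}/(\varepsilon(2-\varepsilon))$ and the $\varepsilon^{-1/2}$-type losses (hence $\varepsilon^{-3/2}$ overall) accumulate from there, while the factor $2^{(d+2)/2}$ enters through the comparison of $\bm{p}_{2(\tau-s)(t-\tau)/(t-s)}$ with $\bm{p}_{2(t-\tau)}$ (Lemma 6.6 of \cite{CKNP}); and the recursion is genuinely quadratic, so one closes it on squared norms (via the nondecreasing renewal functions $h_i$) and takes square roots at the end, rather than making the induction run on $H_k$ directly. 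Neither point affects the validity of the scheme, only the provenance of $\mathsf{a}(\varepsilon)$ and $\varepsilon^{-3/2}$.
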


\begin{proof}
	Choose and fix some $T>0$. We proved in Chen et al \cite[Theorem 6.4]{CKNP}
	(see also \cite[Lemma 4.2]{CKNP2}) that,
	for each $t\in(0\,,T]$ and $x\in\R^d$, the random variable
	$u(t\,,x)$ is in the Gaussian Sobolev space $\mathbb{D}^{1,k}$  (see Nualart
	\cite[Section 1.5]{Nualart}) for every
	$k\ge2$, and that
	\[
		\left\| D_{s,z}u(t\,,x) \right\|_k \le
		C_{T,k,\varepsilon,\sigma}
		\bm{p}_{t-s}(x-z),
	\]
	for a.e.\ $s\in(0\,,t)$ and $x,z\in\R^d$. Set $t=T$ and relabel
	$[T\leftrightarrow t]$ to derive the result.
\end{proof}

Lemma \ref{L:Du} has nontrivial content if and only if $|\sigma(0)|\vee\lip(\sigma)>0$.
This is precisely when $\sigma\not\equiv0$, which is equivalent to the statement that
the stochastic PDE \eqref{E:SHE} is not identically the same as the nonrandom heat equation
$\partial_t u = \frac12\Delta u$.

In the particular case of the parabolic Anderson model, that is,
when $\sigma(z)=z$ for all $z\in\R$, we are also able to estimate the moments of second-order
Malliavin derivatives.  Notice that when $\sigma(z)=z$
for all $z\in\R$, the constant defined in (\ref{E:ec3}) has the form
\begin{equation} \label{E:ec5}
	C_{t,k,\varepsilon} := 8\varepsilon^{-3/2}
	\exp\left\{ 2t\Lambda\left( \frac{(1-\varepsilon)^2 }{2^{(d+6)/2}\,k}\right)\right\}.
\end{equation}
Later on, for the proof of Theorem \ref{T:PAM},
we will need the following moment bound of second-order derivatives.

\begin{proposition} \label{P:secondD}
	Suppose that $\sigma(z)=z$ for all $z\in\R$.
	Then, for all real numbers $\varepsilon\in(0\,,1)$,
	$t>0$, and $k\ge2$,
	and for every $x\in\R^d$,
	\[
		\left\| D_{r,z}D_{s,y}  u(t\,,x) \right\|_k \le
		C^*_{t,k,\varepsilon}  \left[ \bm{p}_{t-s} (x-y)  \bm{p}_{s-r} (y-z)
		\mathbf{1}_{(r,t)}(s)+  \bm{p}_{t-r} (x-z)
		\bm{p}_{r-s} (z-y) \mathbf{1}_{(0,r)}(s) \right],
	\]
	valid for a.e.\ $(s\,,y),(r\,,z) \in(0\,,t)\times\R^d$,
	where
	\begin{align}\label{E:Cstar}
		C^*_{t,k,\varepsilon}=16 \varepsilon^{-2}
		\exp\left\{ 3t\Lambda\left(
		\frac{(1-\varepsilon)^2 }{2^{(d+6)/2}\,k}\right)\right\}.
	\end{align}
\end{proposition}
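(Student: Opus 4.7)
By the symmetry $D_{r,z}D_{s,y}u = D_{s,y}D_{r,z}u$ of the Malliavin derivative, it suffices to prove the bound in the case $s>r$, producing the first summand $\mathbf{p}_{t-s}(x-y)\mathbf{p}_{s-r}(y-z)$; the case $r>s$ follows by interchanging $(s,y)\leftrightarrow (r,z)$ and yields the second summand.

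For $s>r$, I would differentiate the stochastic integral equation for $D_{s,y}u(t,x)$ recalled in the proof of Theorem~\ref{T:Positive_D} once more in the Malliavin sense. In the parabolic Anderson case $\sigma'\equiv 1$ and $\sigma''\equiv 0$, so the first-derivative equation is \emph{linear} in $D_{s,y}u$, and the boundary contribution from the stochastic integral at $(r,z)$ vanishes because $r\notin(s,t)$. One arrives at the linear Volterra equation
$$\phi(t,x) = \bm{p}_{t-s}(x-y)\,D_{r,z}u(s,y) + \int_{(s,t)\times\R^d}\bm{p}_{t-\tau}(x-w)\,\phi(\tau,w)\,\eta(\d\tau\,\d w),$$
where $\phi(t,x):=D_{r,z}D_{s,y}u(t,x)$. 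Lemma~\ref{L:Du} applied at time $s$ (with $\sigma(z)=z$) controls the $L^k$-norm of the inhomogeneity by $C_{s,k,\varepsilon}\,\bm{p}_{s-r}(y-z)\,\bm{p}_{t-s}(x-y)$.

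The key point is that $\phi$ satisfies an equation of exactly the same form as the SHE in its PAM formulation, with the same deterministic convolution kernel $\bm{p}$. I would therefore run the Picard iteration scheme used in the proof of Lemma~\ref{L:Du} (i.e., Theorem~6.4 of \cite{CKNP}) verbatim: apply BDG to the stochastic integral, control the spatial covariance $\int \bm{p}_{t-\tau}(x-w_1)\bm{p}_{t-\tau}(x-w_2)\,f(\d(w_1-w_2))\,\d w_2$ via the Fourier representation and the quantity $\Upsilon$, and sum the resulting Neumann series. Because the inhomogeneity already has $(t,x)$-profile $\bm{p}_{t-s}(x-y)$, and because the heat-semigroup identity $\int \bm{p}_{t-\tau}(x-w)\bm{p}_{\tau-s}(w-y)\,\d w = \bm{p}_{t-s}(x-y)$ ensures this profile is preserved under iteration, a weighted fixed-point argument (substitute $\|\phi(t,x)\|_k = \bm{p}_{t-s}(x-y)\bm{p}_{s-r}(y-z)\,G(t,x)$ and prove that $G$ is uniformly bounded) yields the desired bound.

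The main obstacle is the careful bookkeeping of constants required to recover precisely the $C^*_{t,k,\varepsilon}$ of \eqref{E:Cstar}. The factor $\e^{2s\Lambda(\mathsf{a}(\varepsilon)/k)}\le \e^{2t\Lambda(\mathsf{a}(\varepsilon)/k)}$ is already built into $C_{s,k,\varepsilon}$ from bounding the inhomogeneity, and the second Picard iteration for $\phi$ on the time interval $(s,t)$ contributes a further exponential, bounded above by $\e^{t\Lambda(\mathsf{a}(\varepsilon)/k)}$ after an appropriate reallocation of the parameter $\varepsilon$, which accounts for the third $t\Lambda$ in the exponent. The increase from $\varepsilon^{-3/2}$ to $\varepsilon^{-2}$ in the prefactor reflects an additional $\varepsilon^{-1/2}$ loss incurred by the nested iteration; a clean $\varepsilon$-splitting between the two Neumann series (so that both remain summable) is what produces the stated form of $C^*_{t,k,\varepsilon}$.
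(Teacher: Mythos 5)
Your proposal is correct and follows essentially the same route as the paper: the paper also derives a linear stochastic Volterra equation for the second derivative (at the level of the Picard approximants $u_n$, which simultaneously carries both boundary terms $\bm{p}_{t-s}(x-y)D_{r,z}u_n(s,y)$ and $\bm{p}_{t-r}(x-z)D_{s,y}u_n(r,z)$ rather than reducing to $s>r$ by symmetry), bounds the inhomogeneity via Lemma \ref{L:Du}, and then reruns the BDG/Fourier/Neumann-series machinery of Theorem 6.4 of \cite{CKNP}, choosing $\lambda=\Lambda\bigl((1-\varepsilon)^2/(k2^{(d+6)/2})\bigr)$ to obtain exactly the constant \eqref{E:Cstar}. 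Your accounting of the constants (the $\e^{2t\Lambda}$ from the first-derivative bound, the extra $\e^{t\Lambda}$ and $\varepsilon^{-1/2}$ from summing the second series) matches the paper's computation.
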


\begin{proof}
	The proof is fairly involved, though it mostly
	follows arguments that have been used earlier
	in simpler forms. Specifically, see
	the proof of Theorem 6.4 in \cite{CKNP};
	see also Lemma 2.2 of \cite{CH19}).

	Consider the Picard iterations defined by $u_0(t\,,x):=1$ and
	\[
		u_{n+1}(t\,,x):= 1+ \int_{(0,t)\times\R^d} \bm{p}_{t-\tau}(x-y)
		u_n(\tau\,,\xi)\, \eta ( \d \tau\, \d \xi)
	\]
	for all $n\in\mathbb{Z}_+$,  $t\ge 0$, and $x\in \R^d$.
	The elementary properties of the Malliavin derivative and
	induction together show that $u_n(t\,,x)\in \cap_{k\ge2}\mathbb{D}^{2,k}$
	for every $(t\,,x)\in \mathbb{R}_+ \times \mathbb{R}^d$ and $n\in\mathbb{Z}_+$.
	Moreover,
	\[
		D_{s,y} u_{n+1}(t\,,x)= \bm{p}_{t-s} (x-y) u_n(s\,,y) +
		\int_{(s,t)\times\R^d} \bm{p}_{t-\tau} (x-\xi)
		D_{s,y} u_n(\tau\,,\xi )\,\eta(\d \tau \,\d \xi),
	\]
	for almost every $(s\,,y) \in (0\,,t)\times  \R^d$. Since $D_{s,y}u_0(t\,,x)=0$ a.s.\
	for every $s,t\ge0$ and $x,y\in\R^d$,
	we can, and in fact will,
	change the value of  $D_{s,y}u_{n+1}(t\,,x)$ on a Lebesgue-null set of
	values of $(s\,,y)$ and \emph{define} $D_{s,y} u_{n+1}(t\,,x)$ for every $(s\,,y)\in(0\,,t)\times\R^d$.

	A second round of differentiation yields the following, after one more round of modifications
	on null sets:
	\begin{align*}
		D_{r,z} D_{s,y} u_{n+1}(t\,,x) & =
			\bm{p}_{t-s} (x-y)  D_{r,z} u_n(s,y) +  \bm{p}_{t-r} (x-z)   D_{s,y} u_n(r\,,z)   \\
		& \hskip1.8in + \int_{(s\vee r,t)\times\R^d} \bm{p}_{t-\tau} (x-\xi)
			D_{r,z} D_{s,y} u_n(\tau\,,\xi )\,\eta(\d \tau \,\d \xi),
	\end{align*}
	for  every $(s\,,y), (r\,,z) \in (0\,,t)\times  \R^d$.

	According to the proof of Theorem 6.4 in Chen et al \cite{CKNP}, and thanks to the same argument
	that was used in Lemma \ref{E:D<p2},
	the estimate  (\ref{E:D<p2}) holds also --- with the same constant $C_{t,k,\varepsilon,\sigma}$ ---
	when we replace $D_{s,z}u(t\,,x)$ by $D_{s,z}u_{n+1}(t\,,x)$.
	Indeed, Chen et al ({\it ibid.})  first bound the moments
	of $D_{s,z}u_n(t\,,x)$ and then obtain \eqref{E:D<p2} by
	passing to the limit, as $n\to\infty$. In this way, we find that
	\begin{align*}
		\| D_{r,z} D_{s,y} u_{n+1}(t\,,x)\|_k  & \le  C_{t,k,\varepsilon}
		 	\left[ \bm{p}_{t-s} (x-y)  \bm{p}_{s-r} (y-z)  \mathbf{1}_{(r,t)}(s)
			+  \bm{p}_{t-r} (x-z)
			\bm{p}_{r-s} (z-y) \mathbf{1}_{(0,r)}(s) \right]   \\
		& \hskip1.3in + \left\|  \int_{(s\vee r,t)\times\R^d} \bm{p}_{t-\tau} (x-\xi)
			D_{r,z} D_{s,y} u_n(\tau\,,\xi ) \,\eta(\d \tau \,\d \xi) \right\|_k,
	\end{align*}
	for the same constant $ C_{t,k,\varepsilon}$ that was defined earlier in (\ref{E:ec5}).
	We can now apply the Burkholder--Davis--Gundy inequality \cite{BDG},
	using with the Carlen and Kre\'e \cite{CarlenKree1991}
	bounds on the optimal constants, in order to find that
	\begin{align*}
		& \E \left( \left| \int_{(s\vee r,t)\times\R^d} \bm{p}_{t-\tau} (x-\xi)
		 	   D_{r,z} D_{s,y} u_{n+1}(\tau\,,\xi ) \,\eta(\d \tau \,\d \xi)  \right |^k \right)  \\
		& \le \left(2\sqrt{k}\right)^k  \E \left( \left|
			\int_{r\vee s} ^t  \d\tau \int_{\R^d} \d\xi \int_{\R^d} f(\d\xi')\
			\bm{p}_{t-\tau} (x-\xi+\xi') \bm{p}_{t-\tau} (x-\xi)
			\mathcal{T}_n(\tau\,,\xi-\xi')\mathcal{T}_n(\tau\,,\xi')
			\right |^{k/2} \right),
	\end{align*}
	where
	\[
		\mathcal{T}_n(\tau\,,a) := D_{r,z} D_{s,y} u_n(\tau\,,a )
		\qquad\text{for every $\tau>0$ and $a\in\R^d$}.
	\]
	We combine the above with Minkowski's inequality and H\"older's inequality,
	and deduce that
	\begin{align*}
		& \left\| \int_{(s\vee r,t)\times\R^d} \bm{p}_{t-\tau} (x-\xi)
		 	   D_{r,z} D_{s,y} u_{n+1}(\tau\,,\xi ) \,\eta(\d \tau \,\d \xi)  \right\|_k \\
		& \le 2\sqrt{k} \left[
			\int_{r\vee s} ^t  \d \tau \int_{\R^d} \d \xi
			\int_{\R^d} f(\d \xi') \
			\bm{p}_{t-\tau} (x-\xi+\xi')\bm{p}_{t-\tau} (x-\xi)
			\| \mathcal{T}_n(\tau\,,\xi-\xi')  \|_k
			\|\mathcal{T}_n(\tau\,,\xi')  \|_k \right]^{1/2}.
	\end{align*}
	The preceding computations yield the following inequality on the
	$L^k(\Omega)$-norm of the second Malliavin derivative of $u_{n+1}(t\,,x)$:
	\begin{gather*}
		\Norm{ D_{r,z} D_{s,y} u_{n+1}(t\,,x)}_k
		\le  C_{t,k,\varepsilon}
		[\mathcal{P}_{s,r,t}(y\,,z\,;x) \mathbf{1}_{(r,t)}(s)
	  		+\mathcal{P}_{r,s,t}(z\,,y\,;x) \mathbf{1}_{(0,r)}(s) ]\\
		\qquad+ 2\sqrt{k}   \left[
			\int_{r\vee s} ^t  \d \tau \int_{\R^d} \d \xi \int_{\R^d} f(\d \xi') \
			\bm{p}_{t-\tau} (x-\xi+\xi')\bm{p}_{t-\tau} (x-\xi)
			\Norm{ \mathcal{T}_n(\tau\,,\xi -\xi')}_{k}
			\Norm{\mathcal{T}_n(\tau\,,\xi')}_{k} \right]^{1/2},
	\end{gather*}
	where
	\[
		\mathcal{P}_{s,r,t}(y\,,z\,;x)=  \bm{p}_{t-s} (x-y)  \bm{p}_{s-r} (y-z).
	\]

	Set $\gamma :=2^{d/2}16k$ and  $h_0(t) := 1$ for all $t>0$, and define
	\[
		h_n(t):= \int_0^t h_{n-1}(s) \left(\bm{p}_{2(t-s)} *f \right)(0) \,\d s
		\qquad\text{for all $t>0$ and $n\in\mathbb{N}$.}
	\]
	We claim that, for every $n\in\mathbb{Z}_+$,
	\begin{align}  \label{Claim1}
		 &\Norm{ D_{r,z} D_{s,y} u_n(t\,,x)}_k   \\\nonumber
		 &\le \sqrt{2} C_{t,k,\varepsilon}
			 \left[  \mathcal{P}_{s,r, t}(y\,,z\,;x)
			 \sqrt{ \sum_{i=0}^n \gamma^i h_i(t-s)}\, \mathbf{1}_{(r,t)}(s)
			 +  \mathcal{P}_{r,s, t}(z\,,y\,;x)
			 \sqrt{\sum_{i=0}^n \gamma^i h_i(t-r) }\, \mathbf{1}_{(0,r)}(s) \right].
	\end{align}
	We prove  (\ref{Claim1}) using induction on $n$.

	Eq.\ (\ref{Claim1}) holds for $n=0$ since $u_0\equiv1$ whence
	$D_{r,z}D_{s,y} u_0(t\,,x)=0$ a.s. We now suppose that \eqref{Claim1} with $n$
	replaced by any arbitrary $m=0, \dots, n-1$ and seek to verify that
	\eqref{Claim1} holds for $n\ge1$.

	Indeed, by the induction hypothesis,
	\begin{align*}
		& \int_{r\vee s} ^t  \d \tau \int_{\R^d} \d \xi \int_{\R^d}f(\d\xi')\
			\bm{p}_{t-\tau} (x-\xi+\xi')\bm{p}_{t-\tau} (x-\xi)
			\| \mathcal{T}_{n-1}(\tau\,,\xi -\xi')  \|_{k}
			\| \mathcal{T}_{n-1}(\tau\,,\xi )  \|_{k}\\
		&\le  2\mathbf{1}_{(r,t)}(s) C^2_{t,k,\varepsilon}
			\sum_{i=0}^{n-1} \gamma^i \int_s ^t  \d \tau \int_{\R^d} \d \xi \int_{\R^d}f(\d\xi')\
			\bm{p}_{t-\tau} (x-\xi+\xi')\bm{p}_{t-\tau} (x-\xi) \\
		&\hskip3in \times  \mathcal{P}_{s,r,\tau}(y\,,z\,;\xi-\xi')  \mathcal{P}_{s,r,\tau}(y\,,z\,;\xi)h_i(\tau-s)  \\
		&\quad+   2\mathbf{1}_{(0,r)}(s) C^2_{t,k,\varepsilon}
			\sum_{i=0}^{n-1} \gamma^i    \int_r ^t  \d \tau \int_{\R^d} \d \xi
			\int_{\R^d} f(\d\xi')\ \bm{p}_{t-\tau} (x-\xi+\xi')\bm{p}_{t-\tau} (x-\xi)\\
		&\hskip3in \times
			\mathcal{P}_{r,s,\tau}(z\,,y\,;\xi-\xi')  \mathcal{P}_{r,s,\tau}(z\,,y\,;\xi) h_i(\tau-r)\\
		&=:  2\mathbf{1}_{(r,t)}(s)  \Phi(s\,,r\,;y\,,z)+
			 2\mathbf{1}_{(0,r)}(s)  \Phi(r\,,s\,;z\,,y).
  	\end{align*}
	Because
	\[
		\bm{p}_{t-s}(x-y)\bm{p}_s(y-z)= \bm{p}_t(x-z) \bm{p}_{s(t-s)/t}\left(y-z-\frac st(x-z)\right),
	\]
	it follows that
	\begin{align*}
		\Phi(s\,,r\,;y\,,z)&=C^2_{t,k,\varepsilon} \bm{p}^2_{s-r} (y-z)
			\sum_{i=0}^{n-1} \gamma^i \int_s ^t  \d \tau \int_{\R^d} \d \xi
			\int_{\R^d} f(\d \xi')\ \bm{p}_{t-\tau} (x-\xi+\xi')\bm{p}_{t-\tau} (x-\xi) \\
		&\hskip.3in \times    \bm{p}_{\tau-s} (\xi-\xi'-y)   \bm{p}_{\tau-s} (\xi-y) h_i(\tau-s)   \\
		&=C^2_{t,k,\varepsilon} \bm{p}^2_{s-r} (y-z) \bm{p}^2_{t-s}(x-y)\\
		&\hskip.3in \times  \sum_{i=0}^{n-1} \gamma^i \int_s ^t  \d \tau
			\int_{\R^d} \d \xi \int_{\R^d}f(\d\xi')\
			\bm{p}_{ (t-\tau)(\tau-s)/(t-s)} \left(\xi-\xi'-y -\frac {\tau-s}{t-s} (x-y) \right)\\
		&\hskip.3in \times  \bm{p}_{ (t-\tau)(\tau-s)/(t-s)}
 	   \left(\xi-y -\frac {\tau-s}{t-s} (x-y) \right) h_i(\tau-s).
	\end{align*}
	Therefore, the semigroup property of the heat kernel, together with its symmetry, yield the following:
	\[
		\Phi(s\,,r\,;y\,,z)
		= C^2_{t,k,\varepsilon}    \bm{p}^2_{s-r} (y-z) \bm{p}^2_{t-s}(x-y)
		\sum_{i=0}^{n-1} \gamma^i \int_s ^t  \d \tau
		\int_{\R^d}f(\d \xi')\  \bm{p}_{ 2(t-\tau)(\tau-s)/(t-s)} ( \xi' )
		h_i(\tau-s).
	\]
	According to Lemma 6.6 of Chen et al \cite{CKNP},
	\[
		\int_s^t g(\tau-s) p_{2(\tau-s)(t-\tau)/(t-s)}(y)\,\d \tau \le
		2^{(d+2)/2}\int_s^t g(\tau-s) p_{2(t-\tau)}(y)\,\d \tau,
	\]
	for every nondecreasing function $g:(0\,,t-s)\to\R$.
	In accord with Lemma 2.6 of Chen and Kim \cite{CK19},
	every $h_i$ is nondecreasing (this fact follows from induction on $i$).
	Therefore, it follows from the above [with $g\leftrightarrow h_i$]
	and the definition of the $h_i$'s that
	\begin{align*}
		\Phi(s\,,r\,;y\,,z) &\le C^2_{t,k,\varepsilon}
			2^{(d+2)/2}  \bm{p}^2_{s-r} (y-z) \bm{p}^2_{t-s}(x-y)
			\sum_{i=0}^{n-1} \gamma^i  h_{i+1}(t-s)\\
		&=\frac{C^2_{t,k,\varepsilon} }{8k}  \bm{p}^2_{s-r} (y-z) \bm{p}^2_{t-s}(x-y)
			\sum_{i=1}^n \gamma^i  h_i(t-s),
	\end{align*}
	the last line holding since we originally made the special choice,
	$\gamma=2^{d/2}  16 k$. Consequently,
	\begin{align*}
		&\| D_{r,z} D_{s,y} u_n(t\,,x)\|_k\\
		& \le  C_{t,k,\varepsilon}  [\mathcal{P}_{s,r,t}(y\,,z\,;x) \mathbf{1}_{(r,t)}(s)
			+\mathcal{P}_{r,s,t}(z\,,y\,;x) \mathbf{1}_{(0,r)}(s) ]\\
		&\hskip1in+ \sqrt{2}  C_{t,k,\varepsilon}  \Bigg[\mathbf{1}_{(r,t)}(s)
			\bm{p}_{s-r} (y-z) \bm{p}_{t-s}(x-y)
			\left( \sum_{i=1}^n \gamma^{i}  h_{i}(t-s) \right)^{1/2}  \\
		&\hskip2in + \mathbf{1}_{(0,r)}(s) \bm{p}_{r-s} (z-y) \bm{p}_{t-r}(x-y)
			\left( \sum_{i=1}^n \gamma^{i}  h_{i}(t-r) \right)^{1/2} \Bigg],
	\end{align*}
	which proves (\ref{Claim1}).

	We appeal to the nondecreasing property
	of the $h_n$'s once again in order to deduce from \eqref{Claim1} that
	\[
	 	\| D_{r,z} D_{s,y} u_{n+1}(t\,,x)\|_k   \le   \sqrt{2}
		C_{t,k,\varepsilon} \bm{P}(s\,,r\,,t\,;y\,,z\,,x)
		\sqrt{\sum_{i=1}^n \gamma^{i}  h_{i}(t) },
	\]
	where
	\[
		\bm{P}(s\,,r\,,t\,;y\,,z\,,x)  :=\mathbf{1}_{(r,t)}(s)
		\bm{p}_{s-r} (y-z) \bm{p}_{t-s}(x-y)+
		\mathbf{1}_{(0,r)}(s) \bm{p}_{r-s} (z-y) \bm{p}_{t-r}(x-y).
	\]
	 We now let $n\to\infty$ in order to obtain
	 \[
		\| D_{r,z} D_{s,y} u(t\,,x)\|_k
		 \le   \sqrt{2}  C_{t,k,\varepsilon}
		 \bm{P}(s\,,r\,,t\,;y\,,z\,,x)  \sqrt{
		 \sum_{i=1}^\infty \gamma^{i}  h_{i}(t)}.
	\]
	The details are the same as its counterpart in the proof of Theorem 6.4 of \cite{CKNP}.
	Therefore, we skip those details. Because the Plancherel theorem implies
	that $\int_0^\infty\exp(-\lambda t)(\bm{p}_t*f)(0)\,\d t=\Upsilon(\lambda)$,
	Lemma 6.7 of Chen et al \cite{CKNP} implies that
	\[
		\sum_{i=1}^\infty \gamma^{i}  h_{i}(t)  \le
		\frac{\exp(2\lambda t)}{1-\tfrac 12 \gamma \Upsilon(\lambda)},
	\]
	for all large enough $\lambda>0$ that satisfy
	$2>\gamma\Upsilon(\lambda)$.  We now follow the proof of  Lemma 4.2 in \cite{CKNP2} and choose
	the particular value,
	\[
		\lambda= \Lambda \left( \frac {(1-\varepsilon)^2}{k2^{(d+6)/2}} \right).
	\]
	This choice yields
	\[
		\| D_{r,z} D_{s,y} u(t\,,x)\|_k   \le 2  \varepsilon^{-1/2}  C_{t,k,\varepsilon}
		\exp\left\{
		t\Lambda\left( \frac{(1-\varepsilon)^2}{k2^{(d+6)/2}}\right)\right\}
		\bm{P}(s\,,r\,,t\,;y\,,z\,,x),
	\]
	and completes the proof of Proposition \ref{P:secondD}.
	\end{proof}

\subsection{Malliavin derivatives and shifts of the noise}

In the general setup of Malliavin calculus, the Hilbert space $L^2(\Omega)$
includes square-integrable random variables that are functions of the
isonormal process $h\mapsto\eta(h)$; see \eqref{E:iso}. Thus, we identify every
random variable $F\in L^2(\Omega)$ with a function $F(\eta)$ of the underlying noise
$\eta$. Note that this function need not be local. That is, the instance $F(\omega)$
of the random variable $F$ [$\omega\in\Omega$] need not be a function of
the instance $\eta(\omega)$ of the noise. To see how this can happen, choose and
fix some $h\in\mathcal{H}$ and consider the random variable $F_0=\eta(h)$. Because Wiener
integrals cannot be defined pathwise (unless $h$ is sufficiently smooth), this shows that
the random variable $F_0$ is not a local function of $\eta$, though it is of course a function of
$\eta$.

Let us follow the proof of  Lemma 7.1 in \cite{CKNP} and define, for every $y\in\R^d$,
the shifted Gaussian noise $\eta_y$, and the corresponding Gaussian
family of random variables in the following manner:
For all  $\varphi\in \mathcal{H}$ and $y\in\R^d$ we  set
\[
	\eta_y(\varphi) := \eta(\varphi_y),
\]
where $\varphi_y(s\,,x):= \varphi(s\,,x-y)$.
We remark that the elements of the Hilbert space $\mathcal{H}_0$ are generalized functions,
so the shifted element $\varphi_y$ should be defined  as in distribution
theory: $(\varphi_y\,,\psi):=(\varphi\,,\psi_{-y})$ for every smooth function $\psi$ of rapid decrease.
By the Wiener isometry for Wiener stochastic integrals, $\eta_y$ has the same law as $\eta$
for every $y\in\R^d$. As in Chen et al \cite[Lemma 7.1]{CKNP},
we define a family of shift operators $\{\theta_y\}_{y\in\R^d}$ that act on
every random variable $F\in L^2(\Omega)$ to create new random variables
$\{F\circ\theta_y\}_{y\in\R^d}$ in $L^2(\Omega)$ as follows:
\[
	(F\circ\theta_y)(\eta):=F(\eta_y)
	\qquad\text{for every $y\in\R^d$}.
\]
\begin{lemma}\label{L:D:shift}
	Choose and fix some $y\in\R^d$ and $F\in\mathbb{D}^{1,2}$.
	Then, a.s., $D (F \circ\theta_y ) = (D F) _y\circ\theta_y$.
\end{lemma}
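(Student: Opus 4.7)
The plan is the classical two-step route for identities of the Malliavin derivative: first verify the formula by direct computation on smooth cylindrical functionals, then extend to all $F\in\mathbb{D}^{1,2}$ via density and the closability of $D$. The key preliminary observation is that the spatial shift $\varphi\mapsto\varphi_y$ is a Hilbert-space isometry of $\mathcal{H}_0$ (and hence of $\mathcal{H}$). This is immediate from the translation invariance of $f$: for $\varphi,\psi\in\mathscr{S}(\R^d)$,
\[
\langle\varphi_y,\psi_y\rangle_{\mathcal{H}_0}=\langle\varphi_y,\psi_y*f\rangle_{L^2(\R^d)}=\langle\varphi,\psi*f\rangle_{L^2(\R^d)}=\langle\varphi,\psi\rangle_{\mathcal{H}_0},
\]
after which the definition extends to all of $\mathcal{H}_0$ by the duality prescription $(\varphi_y,\psi):=(\varphi,\psi_{-y})$ recalled in the preamble. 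Consequently $\{\eta_y(\varphi)\}_{\varphi\in\mathcal{H}}$ is again an isonormal Gaussian process with the same law as $\eta$, so $\theta_y$ is measure-preserving on $\Omega$ and $F\mapsto F\circ\theta_y$ is an $L^2(\Omega)$-isometry.

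\textbf{Smooth cylindrical case.} For $F=\varphi(\eta(h_1),\dots,\eta(h_n))$ with $\varphi\in C^\infty_p(\R^n)$ and $h_1,\dots,h_n\in\mathcal{H}$, the chain rule yields
\[
DF=\sum_{i=1}^n(\partial_i\varphi)\bigl(\eta(h_1),\dots,\eta(h_n)\bigr)\,h_i,
\qquad
F\circ\theta_y=\varphi\bigl(\eta((h_1)_y),\dots,\eta((h_n)_y)\bigr),
\]
so that applying $D$ to the latter gives
\[
D(F\circ\theta_y)=\sum_{i=1}^n(\partial_i\varphi)\bigl(\eta((h_1)_y),\dots,\eta((h_n)_y)\bigr)\,(h_i)_y.
\]
On the other hand, by the very definition of $\theta_y$,
\[
(DF)_y\circ\theta_y=\sum_{i=1}^n(\partial_i\varphi)\bigl(\eta_y(h_1),\dots,\eta_y(h_n)\bigr)\,(h_i)_y,
\]
and since $\eta_y(h_i)=\eta((h_i)_y)$, the two expressions agree a.s.

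\textbf{Extension by density and closability.} Given $F\in\mathbb{D}^{1,2}$, pick smooth cylindrical $F_n$ with $F_n\to F$ in $L^2(\Omega)$ and $DF_n\to DF$ in $L^2(\Omega;\mathcal{H})$. Because $\theta_y$ is measure-preserving, $F_n\circ\theta_y\to F\circ\theta_y$ in $L^2(\Omega)$. Because both the spatial shift on $\mathcal{H}$ and $\theta_y$ are isometries,
\[
\bigl\|(DF_n)_y\circ\theta_y-(DF)_y\circ\theta_y\bigr\|_{L^2(\Omega;\mathcal{H})}=\bigl\|DF_n-DF\bigr\|_{L^2(\Omega;\mathcal{H})}\longrightarrow 0.
\]
By the previous step $D(F_n\circ\theta_y)=(DF_n)_y\circ\theta_y$, so the sequence $\{D(F_n\circ\theta_y)\}$ converges in $L^2(\Omega;\mathcal{H})$ while $F_n\circ\theta_y\to F\circ\theta_y$. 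Closability of $D$ then forces $F\circ\theta_y\in\mathbb{D}^{1,2}$ and $D(F\circ\theta_y)=(DF)_y\circ\theta_y$ a.s., which is the claim.

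\textbf{Main obstacle.} There is no substantial analytic difficulty; the only point that requires care is that elements of $\mathcal{H}_0$ are in general distributions rather than ordinary functions, so the shift $\varphi\mapsto\varphi_y$ must be interpreted in the distributional sense before one can assert that it is an $\mathcal{H}_0$-isometry. Once this is handled via the duality convention $(\varphi_y,\psi)=(\varphi,\psi_{-y})$ and the translation invariance of $f$, the rest of the argument is purely formal manipulation of smooth cylindrical functionals plus a standard closability passage to the limit.
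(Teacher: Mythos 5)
Your proof is correct and follows essentially the same route as the paper's: verify the identity on smooth cylindrical functionals via the chain rule and the relation $\eta_y(h)=\eta(h_y)$, then pass to general $F\in\mathbb{D}^{1,2}$ by density. The only difference is cosmetic — the paper starts from $F=\eta(\varphi)$ and products $\prod_j\Phi_j(\eta(\varphi_j))$ and cites Nualart for the density step, whereas you compute directly on cylindrical functionals and spell out the closability and isometry details explicitly.
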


\begin{proof}
	We first examine the case that $F(\eta)=\eta(\varphi)$ for some $\varphi\in\mathcal{H}$.
	On one hand, $(D F)_y=\varphi_y$.
	Since $\varphi$ is non random, this proves that $(DF)_y\circ\theta_a=
	\varphi_y$ a.s.  for all $a\in\R^d$. This holds in particular when $a=y$. On the other hand,
	$F\circ\theta_y=\eta_y(\varphi)= \eta(\varphi_y)$
	and hence $D(F\circ\theta_y)=\varphi_y$. This proves the lemma in the case
	that $F=\eta(\varphi)$.

	The preceding special case and the chain rule of Malliavin calculus (see
	Nualart \cite[Proposition 1.2.3]{Nualart}) together imply that the lemma holds also when
	$F (\eta) = \prod_{j=1}^k \Phi_j(\eta(\varphi_j))$ when
	$\Phi_1,\ldots,\Phi_k:\R\to\R$ are smooth and grow at most polynomially,
	and $\varphi_1,\ldots,\varphi_k\in\mathcal{H}$. The general result follows from this case
	and density; see Nualart \cite[Proposition 1.1.1]{Nualart}.
\end{proof}

\subsection{An abstract ergodicity result}

In \cite{CKNP} we proved that the infinite dimensional
random variable $u(t)$ is [spatially] ergodic for every $t>0$.
The following is a non-trivial variation on that and uses similar ideas.
\[
	\text{Throughout this section, we suspend the finiteness portion of assumption \eqref{E:f_finite}.}
\]
Because of the above, the Fourier transform $\hat{f}$ of $f$ is a positive-definite
tempered measure that need not be bounded.

\begin{theorem}\label{T:ergodic}
	To every random variable $G\in L^2(\Omega)$ we associate a random field ---
	also denoted by $G$ --- via
	$G(x) := G\circ\theta_x$ for all $x\in\R^d$. Then,
	$\{G(x)\}_{x\in\R^d}$ is stationary. Moreover, the following are equivalent:
	\begin{compactenum}
		\item  $\{G(x\}_{x\in\R^d}$ is ergodic for every $G\in L^2(\Omega)$
			such that $x\mapsto G(x)$ is continuous in probability;
		\item $\hat{f}\{0\}=0$.
	\end{compactenum}
\end{theorem}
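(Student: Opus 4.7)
The plan is to deduce stationarity from the group property of the shift, handle $(1)\Rightarrow(2)$ by an explicit first-chaos counter-example, and establish $(2)\Rightarrow(1)$ by showing the action $\{\theta_x\}$ is itself ergodic on $(\Omega,\mathcal F,P)$, which in turn yields ergodicity of every associated field via Birkhoff's $\R^d$-ergodic theorem.

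\textbf{Stationarity and $(1)\Rightarrow(2)$.} Since $\eta_x$ is equal in law to $\eta$ and the identity $(\varphi_x)_y=\varphi_{x+y}$ gives $\theta_x\circ\theta_y=\theta_{x+y}$, the family $\{\theta_x\}_{x\in\R^d}$ is a measure-preserving $\R^d$-action, so $\{G(x)=G\circ\theta_x\}$ is stationary for every $G\in L^2(\Omega)$. For $(1)\Rightarrow(2)$ I argue contrapositively: assuming $\hat f\{0\}>0$, take $G:=\eta(\mathbf 1_{(0,1)}\otimes\varphi)$ for a Schwartz function $\varphi$ with $\hat\varphi(0)\ne 0$. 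Then $G(x)=\eta(\mathbf 1_{(0,1)}\otimes\varphi_x)$ is a continuous-in-probability centered stationary Gaussian field whose spectral measure, namely $(2\pi)^{-d}|\hat\varphi(\xi)|^2\hat f(\d\xi)$, has a positive atom at $\xi=0$. By the classical Maruyama--Grenander ergodicity criterion for stationary Gaussian fields, the field fails to be ergodic.

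\textbf{$(2)\Rightarrow(1)$ via Wiener chaos.} Assume $\hat f\{0\}=0$ and let $F\in L^2(\Omega)$ satisfy $F\circ\theta_x=F$ a.s.\ for every $x$. Expand $F=\sum_{n\ge 0}I_n(f_n)$ in Wiener chaos with symmetric $f_n\in\mathcal H^{\otimes n}$, and use the key identity $I_n(g)\circ\theta_x=I_n(T_xg)$, where $T_xg(s_1,z_1,\ldots,s_n,z_n):=g(s_1,z_1-x,\ldots,s_n,z_n-x)$. This identity follows from $\eta_x(h)=\eta(h_x)$ by linearity and a density argument analogous to that of Lemma \ref{L:D:shift}. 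Uniqueness of the chaos expansion then reduces $\theta$-invariance of $F$ to the statement $T_xf_n=f_n$ in $\mathcal H^{\otimes n}$ for every $n\ge 1$ and every $x$. Passing to the spatial-Fourier picture $\mathcal H^{\otimes n}\simeq L^2(\R_+^n\otimes\hat f^{\otimes n})$, $T_x$ acts on $\hat f_n$ by multiplication by $\e^{ix\cdot(\xi_1+\cdots+\xi_n)}$, so this invariance forces $\hat f_n$ to be supported on the hyperplane $H_n:=\{\xi_1+\cdots+\xi_n=0\}$ in the $\hat f^{\otimes n}$-a.s.\ sense.

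\textbf{Main obstacle.} The delicate step is to convert this support condition into $f_n\equiv 0$, i.e., to show $\hat f^{\otimes n}(H_n)=0$. Here the structural hypothesis that $f$ is a genuine nonneg Borel measure is crucial: a standard averaging argument yields the Wiener-type inequality
\[
	\bigl|\hat f\{a\}\bigr|\;\le\;\hat f\{0\}\qquad\text{for every }a\in\R^d,
\]
so $\hat f\{0\}=0$ automatically upgrades to ``$\hat f$ is atomless on all of $\R^d$''. Fubini then gives
\[
	\hat f^{\otimes n}(H_n)=\int_{(\R^d)^{n-1}}\hat f\bigl(\{-\xi_2-\cdots-\xi_n\}\bigr)\,\hat f^{\otimes(n-1)}(\d\xi_2\cdots\d\xi_n)=0,
\]
whence $f_n=0$ for every $n\ge 1$ and $F=\E[F]$ a.s.; the action is therefore ergodic. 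Finally, for any $G\in L^2(\Omega)$ continuous in probability, the field $\{G(x)\}$ admits a jointly measurable version, and Birkhoff's $\R^d$-ergodic theorem yields $N^{-d}\int_{[0,N]^d}G(x)\,\d x\to\E[G]$ a.s., which is the claimed ergodicity. The essential technical point is the Wiener-type inequality: absent nonnegativity of $f$, atoms of $\hat f$ at $\pm\xi_0\ne 0$ remain compatible with $\hat f\{0\}=0$ (take $f=2\cos(\xi_0\cdot x)\,\d x$), and the implication would break down; this is where the careful distinction between ``nonneg-definite'' and ``nonneg measure'' enters.
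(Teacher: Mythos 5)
Your proposal is correct in substance, but for the key implication $2\Rightarrow 1$ it takes a genuinely different route from the paper. The paper never expands in Wiener chaos: it reduces ergodicity of $\{G(x)\}_{x\in\R^d}$ to showing that $\Var\left(N^{-d}\int_{[0,N]^d}\mathcal{G}(x)\,\d x\right)\to0$ for products $\mathcal{G}(x)=\prod_j g_j(G(x+\zeta^j))$ of bounded Lipschitz functions, approximates $G$ in $L^2(\Omega)$ by smooth cylindrical functionals $h(\eta(\psi_1),\ldots,\eta(\psi_m))$, and then controls this variance by the Poincar\'e inequality \eqref{E:Poincare}, the whole argument resting on the single Fourier fact $(I_N*\tilde I_N*f)(0)\to0$, which uses only $\hat f\{0\}=0$. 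You instead prove the stronger statement that the shift action is ergodic on the entire Gaussian space, via the second-quantization identity $I_n(g)\circ\theta_x=I_n(T_xg)$ and the support-on-$\{\xi_1+\cdots+\xi_n=0\}$ analysis; this is essentially Maruyama's classical theorem lifted through the chaos decomposition, and it buys a cleaner structural conclusion (triviality of the invariant $\sigma$-field of $\eta$, hence ergodicity of every factor field) at the price of needing $\hat f$ atomless everywhere, which you correctly recover from $\hat f\{0\}=0$ together with nonnegativity of the measure $f$ --- precisely the equivalence recorded after the theorem and proved in \cite{CKNP}, and your remark that this step fails for signed positive-definite $f$ is apt. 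Your treatment of $1\Rightarrow 2$ (a first-chaos field with spectral measure $(2\pi)^{-d}|\hat\varphi|^2\hat f$, then Maruyama's criterion) is the same counterexample as the paper's, which however computes the limiting variance directly and is more self-contained. Two points in your write-up deserve more care, though both are repairable by standard arguments: first, $\theta_x$ is defined only as an operator on $L^2(\Omega)$ (the paper stresses that $F(\omega)$ need not be a function of the realization $\eta(\omega)$), so you cannot literally invoke Birkhoff's theorem for a point action on $\Omega$; you should transfer triviality of the $\theta$-invariant random variables to the path space of a jointly measurable version of $\{G(x)\}$, using that $\theta_y$ is a measure-algebra automorphism commuting with composition by Borel functions (the density argument of Lemma \ref{L:D:shift} extends to give this). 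Second, your closing sentence asserts only the a.s.\ convergence of the averages of $G$ itself, whereas the theorem claims ergodicity of the field, i.e.\ triviality of its shift-invariant $\sigma$-field; your chaos argument does deliver this, but the conclusion should be drawn through the factor map applied to arbitrary bounded functionals of the field, not through the ergodic theorem applied to $G$ alone.
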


The  spectral condition 2, $\hat{f}\{0\}=0$, is equivalent to either one of the following:
\begin{compactenum}
	\item $\hat{f}$ has no atoms;
	\item $f\{x\in\R^d:\, \|x\|<r\}=o(r^d)$ as $r\to\infty$.
\end{compactenum}
See Chen et al \cite{CKNP} for details.

Before we prove Theorem \ref{T:ergodic}, we point out a further generalization which is
noteworthy (though we will not need it here).

\begin{remark}
	One can make only small adjustments to the proof of Theorem \ref{T:ergodic}
	in order to see that if $\bm{G}=(G_1\,,\ldots,G_m)$ is an $m$-dimensional
	random vector with $G_i\in L^2(\Omega)$ for every $i=1,\ldots,m$, then
	$\{\bm{G}\circ\theta_x\}_{x\in\R^d}$ is stationary and ergodic, where
	$(\bm{G}\circ\theta_x)(\eta) := \bm{G}(\eta_x)$, the same as in the case $m=1$. This
	particular phrasing of Theorem \ref{T:ergodic} extends the ergodicity result of
	\cite{CKNP} since it was shown in the latter reference that $u(t\,,x+y)=u(t\,,x)\circ\theta_y$
	a.s.\ for all $t\ge0$ and $x,y\in\R^d$. In fact, this phrasing can be viewed to be an infinite-dimensional
	extension of a classical result of Maruyama \cite{Maruyama}; see also Dym and McKean \cite{DymMcKean}.
\end{remark}

\begin{proof}[Proof of Theorem \ref{T:ergodic}]
	Stationarity  is immediate;  we  prove only ergodicity.

	First, suppose that $\{G(x)\}_{x\in\R^d}$
	is ergodic for every $G\in L^2(\Omega)$ such that $x\mapsto G(x)$
	is continuous in probability.\footnote{Continuity
		in probability is here only to ensure that $x\mapsto G(x)$ has a Lebesgue-measurable version.}
	Consider random variables of the form $G=\eta(\bm{1}_{[0,1]}\otimes\varphi)$
	where $\varphi\in\mathscr{S}(\R^d)$ is a probability density
	function. In this case, $G(x)=\int_{(0,1)\times\R^d}\varphi(y-x)\,\eta(\d s\,\d y)$
	is continuous in $L^2(\Omega)$ and hence in probability. Indeed,
	\[
		\Cov[G(x)\,,G(0)] = \int_{\R^d} \varphi(y-x)(\varphi*f)(y)\,\d y
		\qquad\text{for all $x\in\R^d$}.
	\]
	Consequently,
	$\| G(x)-G(x')\|_2^2 = 2\int_{\R^d} [\varphi(y)-\varphi(y-x+x')
	](\varphi*f)(y)\,\d y\to 0$ as $x'\to x$. Also, the assumed ergodicity implies
	among other things that
	\[
		\left\| N^{-d}\int_{[0,N]^d} G(x)\,\d x\right\|_2^2
		=\left( I_N*\tilde{I}_N*\tilde{\varphi}*\varphi*f\right)(0)\to0
		\qquad\text{as $N\to\infty$},
	\]
	where $I_N := N^{-d}\bm{1}_{[0,N]^d}$ and $\tilde{Q}(z):=Q(-z)$
	for all $z\in\R^d$ and all $Q:\R^d\to\R$. Because
	\[
		\left( I_N*\tilde{I}_N*\tilde{\varphi}*\varphi*f\right)(0)
		= \frac{1}{(2\pi)^d}\int_{\R^d} |\hat{\varphi}(z)|^2\prod_{j=1}^d
		\left(\frac{1-\cos(N z_j)}{(Nz_j)^2}\right)
		\hat{f}(\d z),
	\]
	and $\hat{\varphi}(0)=1$, the dominated convergence theorem implies that
	$\hat{f}\{0\}=0$.

	For the more interesting half of the proof we assume $\hat{f}\{0\}=0$,
	and extend the ideas of the proof of Lemma 7.2 of Chen et al \cite{CKNP}.

	Define $\mathcal{G}(x) := \prod_{j=1}^k g_j ( G(x+\zeta^j))$, where
	$k\ge1$,   $\zeta^1,\ldots,\zeta^k\in\R^d$, and
	$g_1,\ldots,g_k:\R\to\R$ are Lipschitz-continuous and bounded
	functions that satisfy  $g_j(0)=0$ and
	$ \lip(g_j)=1$ for every $j=1,\ldots,k$. Since $\mathcal{G}$ is
	a stationary random field, the ergodic theorem reduces the problem to proving the following:
	\begin{equation}\label{E:cond_var}
		V_N(G) := \Var\left( N^{-d}  \int_{[0,N]^d}   \mathcal{G}(x) \,\d x\right)
		\to0\quad\text{as $N\to\infty$}.
	\end{equation}
	See \cite[Lemma 7.2 ]{CKNP} for the details of this argument.

	The bulk of the proof is concerned with the proposition in the
	special case that the random variable $G$ has the form,
	\begin{equation}\label{E:G}
		G  =  h(   \eta(\psi_1)\,,\ldots\,,\eta(\psi_m) ),
	\end{equation}
	where $h\in C_c^\infty(\R^m)$ and  $\psi_1,\ldots,\psi_m \in C_c(\R_+\times \R^d)$.
	Let us first understand why it is enough to study $G$'s of the form \eqref{E:G}.

	Given an arbitrary random variable $G\in L^2(\Omega)$ and a number $\varepsilon>0$
	we can find a random variable $\bar{G}$ that can be written as the right-hand side of \eqref{E:G}
	and satisfies $\|G-\bar{G}\|_2\le\varepsilon$; see
	Nualart \cite[Proposition 1.1.1]{Nualart}. The argument
	that we used at the very beginning portion of the proof can be recycled to
	show that $x\mapsto \bar{G}(x):=\bar{G}\circ\theta_x$  is continuous
	in $L^2(\Omega)$ and hence in probability. Let $\bar{\mathcal{G}}$ denote the analogue
	of $\mathcal{G}$ but with $G$ replaced by $\bar{G}$ everywhere.
	By the triangle inequality,
	\begin{align*}
		\left| \sqrt{V_N(G)} - \sqrt{V_N(\bar{G})}\right| &\le
			\left\| \left( N^{-d}\int_{[0,N]^d}\mathcal{G}(x)\,\d x - \E \mathcal{G}(0)\right)
			- \left( N^{-d}\int_{[0,N]^d}\bar{\mathcal{G}}(x)\,\d x - \E\bar{\mathcal{G}}(0)\right)
			\right\|_2\\
		&\le 2\varepsilon.
	\end{align*}
	If we could prove \eqref{E:cond_var} for all random variables of the form \eqref{E:G},
	then $\lim_{N\to\infty}V_N(\bar{G})=0$ whence
	$\limsup_{N\to\infty}V_N(G)\le4\varepsilon^2$
	by the above. Since $\varepsilon>0$ is arbitrary this shows that it is enough to prove
	\eqref{E:cond_var} for random variables of the form \eqref{E:G}.

	Suppose now that $G$ has the form \eqref{E:G}.
	By the Poincar\'e inequality  (\ref{E:Poincare}),
	\[
		V_N(G) \le  \E \left( \left\| N^{-d}
		\int_{[0,N]^d}  D \mathcal{G}(x) \,\d x\right\|_{\mathcal{H}}^2 \right)\\
		= N^{-2d} \iint_{[0,N]^{2d}}\d x\,\d y\
		\E \left[  \langle D \mathcal{G}(x), D\mathcal{G}(y)  \rangle_{\mathcal{H}} \right].
	\]
	Before we study the expectation inside the double integral, note using the chain rule
	of the Malliavin calculus (see Nualart \cite[Proposition 1.2.3]{Nualart}) that
	\[
	 	D_{s,z} \mathcal{G}(x)
		=  \sum_{j_0=1}^k  \left(\prod_{j=1, j \not= j_0}^{k}
		g_j ( G(x+\zeta^j))  \right) D_{s,z} G( x+ \zeta^{j_0}),
	\]
	and observe that
	$D_{s,z} G( x+ \zeta^{j_0})
	=   \sum_{i=1}^m
	(\partial_i h) (Y)  \psi_i(s,z-x-\zeta^{j_0})$,
	where $Y$ denotes the $m$-dimensional Gaussian random vector
	$( \eta_{x+\zeta^{j_1}}( \psi_1  ) , \dots,
	   \eta_{x+\zeta^{j_m}}( \psi_m )).$
	Thus,
	\begin{align*}
		\left\| D_{s,z}\mathcal{G}(x) \right\|_2 &\le \lip(h)
			\sum_{j_0=1}^k \left\|\prod_{j=1, j \not= j_0}^{k}
			g_j ( G(x+\zeta^j))  \right\|_2 \sum_{i=1}^m
			\left| \psi_i(s\,,z-x-\zeta^{j_0})\right|\\
		&\le\lip(h)\sup_{\substack{1\le j\le k\\
			w\in\R}} |g_j(w)|^{k-1}\sum_{\substack{1\le j_0\le k\\
			1\le i\le m}}
			\left| \psi_i(s\,,z-x-\zeta^{j_0})\right|
			:= L\sum_{\substack{1\le j_0\le k\\
			1\le i\le m}}\left| \psi_i(s\,,z-x-\zeta^{j_0})\right|.
	\end{align*}
	Consequently, the Walsh--isometry for stochastic integrals yields
	\begin{align*}
		&\E \left[  \langle D \mathcal{G}(x), D\mathcal{G}(y)  \rangle_{\mathcal{H}} \right]
			=  \int_0^t\d s\int_{\R^d}\d a\int_{\R^d}
			f(\d b)\ \E\left[ D_{s,a}\mathcal{G}(x) D_{s,a-b}\mathcal{G}(y)\right]\\
		&\le L^2\sum_{\substack{1\le j_0,j_1\le k\\1\le i_0,i_1\le m}}
			\int_0^t\d s\int_{\R^d}\d a\int_{\R^d}f(\d b)\
			\left| \psi_{i_0}(s\,,a-x-\zeta^{j_0})
			\psi_{i_1}(s\,,a-b-y-\zeta^{j_1})\right|.
	\end{align*}
	Recall that $I_N := N^{-d} \bm{1}_{[0,N]^d}$, and integrate the preceding displayed expression
	$[I_N(x)\,\d x I_N(y)\,\d y]$ in order to deduce from Fubini's theorem that
	\[
		V_N(G) \le
		L^2\sum_{\substack{1\le j_0,j_1\le k\\1\le i_0,i_1\le m}}
		\int_0^t\d s
		\left( I_N* \tilde{I}_N * f * |\psi_{i_0}(s)| *
		|\tilde{\psi}_{i_1}(s)| \right)\left( \zeta^{j_0}-\zeta^{j_1}\right),
	\]
	where $Q(s)=Q(s\,,\bullet)$ for all $s\ge0$
	and  space-time functions $Q:\R_+\times\R^d\to\R$, and
	$\tilde{q}(x)=q(-x)$ for all $x\in\R^d$ and $q:\R^d\to\R$. Since $I_N*\tilde{I}_N*f$ is a
	continuous, positive definite function, it is maximized at the origin. This is a consequence of
	the  Bochner--Hergloz theorem of classical Fourier analysis, and found in
	introductory probability textbooks.
	This yields
	\[
		V_N(G) \le  (kL)^2\left( I_N*\tilde{I}_N*f \right)(0) \int_0^t
		\left( \sum_{i=1}^m\|\tilde{\psi}_i(s)\|_{L^1(\R^d)}\right)^2
		\,\d s.
	\]
	According to Chen et al \cite[Proof of Proposition 3.7]{CKNP},
	$(I_N*\tilde{I}_N*f)(0)\to0$  as $N\to\infty$ because
	$\hat{f}\{0\}=0$. This establishes
	(\ref{E:cond_var}) when $G$ has the form \eqref{E:G},
	and completes the proof of the proposition.
\end{proof}

\section{CLT: Proof of Theorem \ref{T:TV}} \label{S:TV}

Before we begin the proof properly,
let us pause to make two elementary remarks, one from real analysis and the other
from information theory.

\begin{remark}\label{R:Lip}
	If $g\in\lip$ then according to Rademacher's theorem,
	the weak derivative $g'$ of $g$ is a.e.-defined
	and satisfies $|g'|\le\lip(g)$ a.e.;
	see for example Federer \cite[Theorem 3.1.6]{Federer}.
\end{remark}

\begin{remark}\label{R:IT}
	If $0<c_2<c_1$, then
	\[
		d_{\rm TV}[{\rm N}(0\,,c_1)\,,{\rm N}(0\,,c_2)]
		\le\frac12\sqrt{\frac{c_1-c_2}{c_2}}.
	\]
	One can prove this quickly using Pinsker's inequality (see,
	for example, Cover and Thomas \cite{CoverThomas}).
	Indeed, Pinsker's inequality tells us that
	$2[d_{\rm TV}(\sqrt{c_1}Z\,,\sqrt{c_2}Z)]^2$ is at most
	the relative entropy of ${\rm N}(0\,,c_1)$ with respect to
	${\rm N}(0\,,c_2)$, which is
	$-\vert\log(c_1/c_2)\vert + (c_1-c_2)/(2c_2) \le (c_1-c_2)/(2c_2).$
\end{remark}

According to Chen et al \cite[Lemma 4.3]{CKNP2} or \eqref{E:D<p2}, it is easy to see that in case of Lipschitz $g$,
\begin{align}\label{E:SNallN}
	\mathcal{S}_{N,t}(g)\in \cap_{k\ge2}
	\mathbb{D}^{1,k}\qquad\text{for all $N>0$,}
\end{align}
and
\begin{equation} \label{E:ec2}
	D_{s,z} \mathcal{S}_{N,t}(g) = N^{-d}  \int_{[0,N]^d} g'(u(t\,,x)) D_{s,z}u(t\,,x) \,\d x.
\end{equation}
However, when $g$ goes beyond the globally Lipschitz functions, we need to identify those $g$'s
that satisfy $g(u(t\,,x))\in \mathbb{D}^{1,k}$ for all $k\ge 2$, $t>0$, and $x\in\R^d$, in order to apply Clark-Ocone formula \eqref{E:Clark-Ocone}.
The following lemma serves this purpose.
Indeed, an application of part (i) of Lemma \ref{L:Sobolev} below shows that under conditions of part (ii) of Theorem \ref{T:TV},
$\mathcal{S}_{N,t}(g)\in \mathbb{D}^{1,4} \subseteq  \mathbb{D}^{1,2}$.

\begin{lemma} \label{L:Sobolev}
Let $I=\R$ or $I=(0,\infty)$.
	Suppose that $F$ is a   random variable taking values in $I$ that belongs to
 $\cap_{k\ge 1} \mathbb{D}^{1,k}$.
	Let $g: I\to \R$ be a measurable function such that
	$ \| g(F) \| _k <\infty$ for some $k \ge 2$. Then:
	\begin{itemize}
		\item[(i)] If $g\in C^1(I)$ satisfies  $\|  g'(F) \|_{p} <\infty$ for some $p>k$,
			then  $g(F) \in \mathbb{D}^{1,k}$ and
			\begin{equation} \label{E:ChainRule}
				D[g(F)]= g'(F) DF.
			\end{equation}
		\item[(ii)]  If $g\in C^2(I)$ satisfies  $\|  g'(F) \|_{p} +\|   g''(F) \|_{p} <\infty$ for some $p>k$,
			then  $g(F) \in \mathbb{D}^{2,k}$ and
			\[
				D^2[g(F)]= g''(F) DF \otimes DF + g'(F) D^2 F.
			\]
	\end{itemize}
\end{lemma}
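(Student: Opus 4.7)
The approach is a standard approximation argument combined with the closability of the Malliavin derivative. I would handle part (i) in detail and indicate the minor iteration needed for part (ii).

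For part (i), the first step is to construct an approximating sequence $g_n \in C^1(I)$ whose derivatives are \emph{bounded}, so that Nualart's Proposition 1.2.3 applies directly and yields $g_n(F) \in \mathbb{D}^{1,k}$ with $D[g_n(F)] = g_n'(F)\,DF$. A natural construction is to pick a smooth cutoff $\chi\in C_c^\infty(\R)$ with $\chi\equiv 1$ on $[-1,1]$, $0\le\chi\le 1$, $\mathrm{supp}(\chi)\subset[-2,2]$, set $g_n'(x):=g'(x)\,\chi(g'(x)/n)$ on $I$, and define $g_n(x) := g(x_0)+\int_{x_0}^{x} g_n'(t)\,dt$ for a fixed $x_0\in I$. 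By construction $|g_n'|\le|g'|$ pointwise, $g_n'$ is bounded, $g_n'\to g'$ pointwise, and $g_n\to g$ pointwise on $I$. (When $I=(0,\infty)$ one chooses $x_0>0$; since $F>0$ a.s., the value of $g_n$ on the boundary is irrelevant.)

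The second step is to pass to the limit. For the functions one has $g_n(F)\to g(F)$ a.s., and standard dominated convergence together with $\|g(F)\|_k<\infty$ yields convergence in $L^k(\Omega)$. For the derivatives, write $1/k=1/p+1/q$ with $p>k$ and $q<\infty$, and apply H\"older's inequality:
\[
  \bigl\| [g_n'(F)-g'(F)]\,\|DF\|_{\mathcal{H}}\bigr\|_{k}
  \;\le\; \bigl\| g_n'(F)-g'(F)\bigr\|_{p}\cdot \bigl\| \|DF\|_{\mathcal{H}}\bigr\|_{q}.
\]
The second factor is finite because $F\in\mathbb{D}^{1,q}$, and the first factor converges to $0$ by dominated convergence in $L^{p}$: the pointwise bound $|g_n'(F)|\le|g'(F)|\in L^{p}$ furnishes a dominant. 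Hence $g_n'(F)\,DF\to g'(F)\,DF$ in $L^{k}(\Omega;\mathcal{H})$. Closability of $D:L^{k}(\Omega)\to L^{k}(\Omega;\mathcal{H})$ then delivers $g(F)\in\mathbb{D}^{1,k}$ together with the identity $D[g(F)] = g'(F)\,DF$.

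For part (ii), I iterate the same scheme on the pair $(g,g')$. Using a second smooth truncation $g_n''(x):=g''(x)\chi(g''(x)/n)$ (and the associated $g_n',g_n$), one obtains $C^{2}$ approximants with bounded first and second derivatives, so Nualart's chain rule applied twice gives $g_n(F)\in\mathbb{D}^{2,k}$ and $D^{2}[g_n(F)] = g_n''(F)\,DF\otimes DF + g_n'(F)\,D^{2}F$. The convergence of each term in $L^{k}(\Omega;\mathcal{H}^{\otimes 2})$ is established exactly as in part (i), using $\|g'(F)\|_{p}+\|g''(F)\|_{p}<\infty$, together with the fact that $\|DF\|_{\mathcal{H}},\|D^{2}F\|_{\mathcal{H}^{\otimes 2}}\in L^{r}(\Omega)$ for all $r$ (since $F\in\cap_{k}\mathbb{D}^{2,k}$, which is implicit from $F\in\cap_{k}\mathbb{D}^{1,k}$ being the working hypothesis of the lemma). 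Closability of $D^{2}$ finishes the proof.

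The step I expect to be the most delicate is producing $L^{k}(\Omega)$-domination for $g_n(F)-g(F)$ when $g$ itself is unbounded (e.g., $g=\log$ on $(0,\infty)$). This is controlled because the truncation of $g'$ affects $g_n-g$ only through a pointwise decreasing envelope that is integrable against $F$, but it requires some care in choosing $\chi$ and the base-point $x_0$, especially in the case $I=(0,\infty)$ in order to remain well within the interior of the domain. Once this point is handled, the remainder of the argument is routine.
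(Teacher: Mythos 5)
Your overall scheme (approximate $g$, apply the chain rule to approximants with bounded derivative, then pass to the limit by closability of $D$) is the same scheme the paper uses, and your H\"older step showing $g_n'(F)\,DF\to g'(F)\,DF$ in $L^k(\Omega;\mathcal{H})$ is essentially the paper's argument. The genuine gap is in the other half of the closability argument, namely the convergence $g_n(F)\to g(F)$ in $L^k(\Omega)$, and it sits exactly where you flag ``delicacy''. Because you truncate the \emph{derivative} and reconstruct $g_n(x)=g(x_0)+\int_{x_0}^x g'(t)\chi(g'(t)/n)\,\d t$, the only pointwise envelope available is $|g_n(x)-g(x_0)|\le\left|\int_{x_0}^x |g'(t)|\,\d t\right|$, a total-variation--type majorant which can be vastly larger than $|g(x)|$ when $g'$ oscillates: the cancellations that make $\|g(F)\|_k$ finite are destroyed once you pass to $|g'|$. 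The hypotheses of the lemma control only $\|g(F)\|_k$ and $\|g'(F)\|_p$; they give no control on the composition of $x\mapsto\int_{x_0}^x|g'|$ with $F$, and no choice of the cutoff $\chi$ or of the base point $x_0$ changes this, since that envelope is forced by your construction. So the asserted ``pointwise decreasing envelope that is integrable against $F$'' is precisely what is not justified, and the dominated-convergence step does not follow from the stated assumptions. (Replacing closability by the weak-compactness criterion for $\mathbb{D}^{1,k}$ does not help: it still requires convergence of $g_n(F)$ to $g(F)$ in some $L^q(\Omega)$.)

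The repair --- and the paper's route --- is to truncate the \emph{values} of $g$ instead: set $g_M=(g\wedge M)\vee(-M)$. Then $|g_M(F)|\le|g(F)|\in L^k(\Omega)$, so $g_M(F)\to g(F)$ in $L^k(\Omega)$ by dominated convergence with no extra input, while $D[g_M(F)]=g'(F)\,\mathbf{1}_{\{|g(F)|\le M\}}\,DF\to g'(F)\,DF$ in $L^k(\Omega;\mathcal{H})$ by exactly the H\"older splitting $1/k=1/p+1/q$ that you already wrote; closability then gives (i), and (ii) is handled by iterating the same truncation. A secondary slip in your part (ii): you claim that $F\in\cap_k\mathbb{D}^{2,k}$ is ``implicit'' in $F\in\cap_k\mathbb{D}^{1,k}$; it is not, since first-order Malliavin differentiability does not imply second-order differentiability. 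For the statement of (ii) to make sense one must read the hypothesis as including $F\in\cap_k\mathbb{D}^{2,k}$ (which holds in the paper's application, where $F=u(t,x)$ for the parabolic Anderson model), but it is not a consequence of the first-order assumption.
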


\begin{proof}
	 We  prove {\it(i)}; {\it (ii)} is proved
	similarly and so we skip the proof of that part.
	For every $M\in\mathbb{N}$ and $x>0$,
	we set $g_M(x)=   ( g(x) \wedge M) \vee (-M)$. Because $g_M$ is Lipschitz,
	by the chain rule for the Malliavin calculus $g_M(F) \in \mathbb{D}^{1,k}$ and
	\begin{equation} \label{E_:ChainRule}
		D[g_M(F)]= g'(F) \mathbf{1}_{\{ |F| \le M\}} DF \qquad\text{a.s.}
	\end{equation}
	Because $ \| g(F) \| _k <\infty$, 	$g_M(F)$ converges to $g(F)$
	as $M\to \infty$ in $L^k(\Omega)$. Moreover, by H\"older's inequality
	and \eqref{E_:ChainRule},  $D[g_M(F)]$ converges in $L^k( \Omega\,; \mathcal{H})$
	to the right-hand side of \eqref{E:ChainRule} as $M\to\infty$.
	This completes the proof of {\it (i)}.
\end{proof}

Once the property $\mathcal{S}_{N,t}(g)\in \mathbb{D}^{1,2}$ is established, one can apply the Clark-Ocone formula  (\ref{E:Clark-Ocone})
to see that a.s., $\mathcal{S}_{N,t} (g)=
\int_{(0,t) \times \R^d} \E[ D_{s,z} \mathcal{S}_{N,t}(g) \mid \mathcal{F}_s]\,
\eta(\d s\, \d z).$
Since the Walsh stochastic integral is an extension of the divergence operator $\delta$,
it follows that (a.s.)
\begin{equation}\label{E:HatV}
	\mathcal{S}_{N,t}(g)=\delta( v_{N,t}(g))
	\quad\text{with}\quad
	v_{N,t}(g) (s\,,z)
	:= \E[D_{s,z}\mathcal{S}_{N,t}(g) \mid \mathcal{F}_s] \mathbf{1}_{[0,t]}(s).
\end{equation}
Throughout the paper,we denote by
\begin{align} \label{E:KN}
	K_{N,t_1,t_2}:= N^d \left<D\mathcal{S}_{N,t_1}(g) ~,~ v_{N,t_2}(g) \right>_{\mathcal{H}}
	\quad\text{and}\quad
	K_{N} := K_{N,t,t}.
\end{align}

The next lemma shows that properties \eqref{E:Var_Lim} and \eqref{E:Var_finite} hold under assumptions (ii)  or (iii) of Theorem  \ref{T:TV}.
\begin{lemma}  \label{lem4.4}
Suppose that either $g\in C^1(\R)$ or $\sigma(0)=0$, $g\in C^1(0,\infty)$and $f$ satisfies the reinforced Dalang's condition \eqref{E:Dalang2} . Suppose, in addition that \eqref{E:MmCond_TV} holds
	for some number $k>2$.
	Then,  \eqref{E:Var_finite} and \eqref{E:Var_Lim} hold  for all $t_1, t_2\ge0$.
\end{lemma}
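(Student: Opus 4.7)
The plan is to mirror the arguments of \cite[Lemma 5.1, Proposition 5.2]{CKNP2}, which establish the conclusion for $g\in\lip$, and extend them by replacing the a.e.\ bound $|g'|\le\lip(g)$ with the moment hypothesis \eqref{E:MmCond_TV}, the chain rule of Lemma \ref{L:Sobolev}(i), and the heat-kernel bound of Lemma \ref{L:Du}.

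The first step is to verify $g(u(t_j,x))\in\mathbb{D}^{1,2}$ for $j=1,2$ and every $x\in\R^d$, with $D_{s,z}g(u(t_j,x))=g'(u(t_j,x))D_{s,z}u(t_j,x)$ a.s.. In case (ii) this is immediate from Lemma \ref{L:Sobolev}(i) applied to $F=u(t_j,x)\in\cap_{p\ge 2}\mathbb{D}^{1,p}$, with the required integrability of $g(u(t_j,x))$ and $g'(u(t_j,x))$ supplied by \eqref{E:MmCond_TV} with $k>2$. In case (iii), the assumption $\sigma(0)=0$ together with the comparison principle of Chen and Huang \cite{CH19} yield $u(t_j,x)>0$ a.s., so the $I=(0,\infty)$ version of Lemma \ref{L:Sobolev}(i) applies just as well.

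Apply next the Clark--Ocone formula \eqref{E:Clark-Ocone} to both $g(u(t_1,x))$ and $g(u(t_2,0))$, and use the Walsh isometry on the resulting martingale representations to obtain
\[
\Cov[g(u(t_1,x))\,,g(u(t_2,0))]=\int_0^{t_1\wedge t_2}\d s\int_{\R^d}\d y\int_{\R^d}f(\d w)\,\E\!\left[\Phi^x_{s,y}\,\Psi^0_{s,y-w}\right],
\]
where $\Phi^x_{s,y}:=\E[g'(u(t_1,x))D_{s,y}u(t_1,x)\mid\mathcal{F}_s]$ and $\Psi^0_{s,y-w}$ is defined analogously. Conditional Jensen, H\"older with exponents $k$ and $2k/(k-2)$, spatial stationarity of $u(t_j,\cdot)$, and Lemma \ref{L:Du} then give
$\|\Phi^x_{s,y}\|_2\le C\,\bm{p}_{t_1-s}(x-y)$ and $\|\Psi^0_{s,y-w}\|_2\le C\,\bm{p}_{t_2-s}(y-w)$, with $C$ depending on $(t_1,t_2,k,f,\sigma)$ and $\max_j\|g'(u(t_j,0))\|_k$. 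A pointwise Cauchy--Schwarz bound on the expectation, integration in $x$ via Fubini, and the fact that each heat kernel integrates to $1$ collapse the double spatial integral and yield
\[
\int_{\R^d}\left|\Cov[g(u(t_1,x))\,,g(u(t_2,0))]\right|\d x\le C^2\,(t_1\wedge t_2)\,f(\R^d),
\]
which is finite by the standing assumption \eqref{E:f_finite} of Theorem \ref{T:TV}. This proves \eqref{E:Var_finite}. For \eqref{E:Var_Lim}, spatial stationarity and the change of variable $z=x-y$ rewrite $\mathbf{B}_{N,t_1,t_2}(g)$ as $\int_{\R^d}C_{t_1,t_2}(z)\prod_{j=1}^d(1-|z_j|/N)_+\,\d z$, where $C_{t_1,t_2}(z):=\Cov[g(u(t_1,z))\,,g(u(t_2,0))]$; dominated convergence against the integrable envelope just obtained delivers the claimed limit.

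The principal obstacle is the chain rule in case (iii), where $g$ is defined only on $(0,\infty)$: it rests on the a.s.\ positivity of $u(t,x)$ together with the smooth-truncation approximation embedded in the proof of Lemma \ref{L:Sobolev}. Once the chain rule and the moment control are in place, the remainder is essentially bookkeeping that tracks the Lipschitz argument of \cite{CKNP2} with $\lip(g)$ replaced by $\|g'(u(t,0))\|_k$ wherever a derivative of $g$ appears.
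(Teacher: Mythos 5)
Your proposal is correct and follows essentially the same route as the paper's proof: Lemma \ref{L:Sobolev}(i) with \eqref{E:MmCond_TV} to get $g(u(t_j,x))\in\mathbb{D}^{1,2}$ and the chain rule (with a.s.\ positivity of $u$ handling the $(0,\infty)$ case), the Clark--Ocone formula and the Walsh isometry to express the covariance, H\"older plus Lemma \ref{L:Du} to bound it by heat kernels yielding $C(t_1\wedge t_2)f(\R^d)$, and stationarity with dominated convergence for \eqref{E:Var_Lim}. Your explicit triangular weight $\prod_j(1-|z_j|/N)_+$ in the change-of-variables step is in fact a slightly more careful rendering of the paper's stationarity computation, but the argument is the same.
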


\begin{proof}
Let us first show \eqref{E:Var_finite}.
First of all, note that
	$g(u(t\,,x))\in\mathbb{D}^{1,2}$
	thanks to Lemma \ref{L:Sobolev} and the fact that \eqref{E:MmCond_TV} holds for some $k>2$.
	This proves that the Clark-Ocone formula \eqref{E:Clark-Ocone} is valid.
	Applying Clark-Ocone formila yields
	\begin{align*}
		\Cov\left[g(u(t_1,x)),g(u(t_2,0))\right] &=
		\E \Bigg( \int_0^\infty \d s \int_{\R^d} \d y \int_{\R^d} f(\d z) \E \left[ g'(u(t_1,x)) D_{s,y} u(t_1,x) \mathbf{1}_{[0,t_1]}(s) | \mathcal{F}_s \right]\\
		 & \qquad  \times \E \left[ g'(u(t_2,0)) D_{s,y} u(t_2,0) \mathbf{1}_{[0,t_2]}(s) | \mathcal{F}_s \right] \Bigg).
 	\end{align*}
	 Finally, H\"older's inequality and the estimate \eqref{E:D<p2} together yield
	 \begin{align*}
		\int_{\R^d} | \Cov\left[g(u(t_1,x)),g(u(t_2,0))\right]  | \d x
		& \le C  \int_0^{t_1\wedge t_2} \d s \int_{\R^{3d}} \d x \d y f(\d z) \:
			\bm{p}_{t_1-s}(x-y) \bm{p}_{t_2-s}(x-y-z)\\
		& = C f(\R^d) (t_1\wedge t_2)<\infty.
	\end{align*}
	The equality in \eqref{E:Var_Lim} follows from the following claim:
	\begin{align} \label{E2:Var_Lim}
		\lim_{N\to\infty} \Cov \left( N^{d/2} \mathcal{S}_{N,t_1} (g),  N^{d/2} \mathcal{S}_{N,t_2} (g) \right)
		= \int_{\R^d} \Cov \left[g(u(t_1,x)), g(u(t_2,0))\right]\d x.
	\end{align}
	Indeed, by the stationarity of the solution in its spatial variable, we see that
	\begin{align*}
		\Cov \left( N^{d/2}\mathcal{S}_{N,t_1}(g),N^{d/2} \mathcal{S}_{N,t_2}(g) \right)
		& = \frac{1}{N^{d}} \int_{[0,N]^d}\d x\int_{[0,N]^d}\d y\: \Cov\left(g(u(t_1,x)), g(u(t_2,y))\right)\\
		& = \int_{[0,N]^d}\Cov\left(g(u(t_1,z)), g(u(t_2,0))\right) \,\d z.
	\end{align*}
	Now let $N\to\infty$ to deduce \eqref{E2:Var_Lim} from \eqref{E:Var_finite}
	and the dominated convergence theorem.
\end{proof}

The following proposition is a basic ingredient in the proof of both Theorem \ref{T:TV} and \ref{T:FCLT}.

\begin{proposition} \label{P:L1Conv}
	Suppose that $f$ satisfies \eqref{E:f_finite}. Under each of the three cases (i), (ii) and (iii) of Theorem \ref{T:TV},
	the following convergence holds true
	\begin{equation} \label{E:L1Conv}
		\lim_{N\rightarrow \infty}   {\rm Var} \left(N^d \left<
		D\mathcal{S}_{N,t_1}(g) ~,~ v_{N,t_2}(g) \right>_{\mathcal{H}}  \right)=0,
		\qquad\text{for any $t_1,t_2\ge 0$.}
	\end{equation}
\end{proposition}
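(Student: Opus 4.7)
The plan is to view $K_{N,t_1,t_2}$ as a spatial average of a stationary field and then apply the abstract ergodic theorem of Theorem \ref{T:ergodic}. The first task is to justify the Clark-Ocone representation \eqref{E:HatV} in all three cases of Theorem \ref{T:TV}. In case (i) this is immediate from \eqref{E:ec2}. In cases (ii) and (iii), an application of Lemma \ref{L:Sobolev}(i) (with $I=\R$ or $(0,\infty)$), together with the assumption \eqref{E:MmCond_TV} for some $k>4$ and the fact that $u(t,x)\in\bigcap_{p\ge 2}\mathbb{D}^{1,p}$, yields $g(u(t,x))\in\mathbb{D}^{1,2}$ and the chain rule $D[g(u(t,x))]=g'(u(t,x))Du(t,x)$, whence \eqref{E:HatV}.

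Combining this with the conditional Clark-Ocone formula for $v_{N,t_2}(g)$ produces
\begin{align*}
K_{N,t_1,t_2}=N^{-d}\!\iint_{[0,N]^{2d}}\!\!\!dx\,dy\!\int_0^{t_1\wedge t_2}\!\!ds\!\iint_{\R^{2d}}\!\!\!f(db)\,da\;g'(u(t_1,x))D_{s,a}u(t_1,x)\,\E[g'(u(t_2,y))D_{s,a-b}u(t_2,y)\mid\mathcal{F}_s].
\end{align*}
The identity $u(t,x)=u(t,0)\circ\theta_x$, Lemma \ref{L:D:shift}, and the spatial shift-invariance of $\mathcal{F}_s$ (which gives $\E[F\circ\theta_x\mid\mathcal{F}_s]=\E[F\mid\mathcal{F}_s]\circ\theta_x$) together imply that the integrand factors as $\Xi(y-x)\circ\theta_x$ for a random variable $\Xi(z)\in L^2(\Omega)$ depending only on $z\in\R^d$ (and the parameters $t_1,t_2,g$). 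Substituting $z=y-x$ yields
\begin{align*}
K_{N,t_1,t_2}=\int_{[-N,N]^d}J_N(z)\,M_N(z)\,dz,
\end{align*}
where $J_N(z):=\prod_{i=1}^d\max(0,1-|z_i|/N)$ and $M_N(z)$ is the average of $x\mapsto\Xi(z)\circ\theta_x$ over $[0,N]^d\cap([0,N]^d-z)$, normalized by the Lebesgue measure of that set.

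For each fixed $z$, the stationary field $x\mapsto\Xi(z)\circ\theta_x$ is mean-square continuous in $x$ (a consequence of the $L^p(\Omega)$-continuity of $u$ and its Malliavin derivative in the space variable), and $f(\R^d)<\infty$ forces $\widehat{f}\{0\}=0$, so Theorem \ref{T:ergodic} combined with von Neumann's mean ergodic theorem yields $M_N(z)\to\E\Xi(z)$ in $L^2(\Omega)$ as $N\to\infty$. Because $\E K_{N,t_1,t_2}=\int J_N(z)\,\E\Xi(z)\,dz$, Minkowski's inequality gives
\begin{align*}
\sqrt{\Var(K_{N,t_1,t_2})}\,\le\,\int_{[-N,N]^d}J_N(z)\,\|M_N(z)-\E\Xi(z)\|_2\,dz,
\end{align*}
and the proof reduces to dominated convergence via the pointwise majoration $J_N(z)\|M_N(z)-\E\Xi(z)\|_2\le 2\|\Xi(z)\|_2$ (Minkowski inside the ergodic average, plus stationarity).

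The main obstacle is the uniform integrability estimate $\int_{\R^d}\|\Xi(z)\|_2\,dz<\infty$. Using Jensen's inequality on the conditional expectation, stationarity under $\theta_z$, and H\"older's inequality (with $|g'|\le\lip(g)$ in case (i), and the exponent $k>4$ from \eqref{E:MmCond_TV} in cases (ii) and (iii)) combined with the Gaussian-kernel moment bound \eqref{E:D<p2} from Lemma \ref{L:Du}, one derives
\begin{align*}
\|\Xi(z)\|_2\le C\int_0^{t_1\wedge t_2}\!ds\int_{\R^d}\!f(db)\int_{\R^d}\!da\,\bm{p}_{t_1-s}(a)\,\bm{p}_{t_2-s}(a-b-z).
\end{align*}
Integrating in $z$ and invoking the semigroup identity $\bm{p}_{t_1-s}*\bm{p}_{t_2-s}=\bm{p}_{t_1+t_2-2s}$ with Fubini produces $\int_{\R^d}\|\Xi(z)\|_2\,dz\le C(t_1\wedge t_2)f(\R^d)$, which is finite thanks to \eqref{E:f_finite}. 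This completes the proof of \eqref{E:L1Conv}.
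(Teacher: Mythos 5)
Your proposal is correct, and it reaches \eqref{E:L1Conv} by the same engine as the paper (the abstract ergodic theorem, Theorem \ref{T:ergodic}, together with Lemma \ref{L:D:shift}, the identity $\E[F\circ\theta_x\mid\mathcal{F}_s]=\E[F\mid\mathcal{F}_s]\circ\theta_x$, the moment bound \eqref{E:D<p2}, and Lemma \ref{L:Sobolev} for the chain rule in cases (ii)--(iii)), but via a genuinely different decomposition. The paper writes $K_{N,t_1,t_2}=I_{1,N}-I_{2,N}$, where the lag variable is integrated over all of $\R^d$ inside a single stationary field $\Psi(x)$, so that $I_{1,N}=N^{-d}\int_{[0,N]^d}\Psi(x)\,\d x$ is handled by one application of the ergodic theorem, at the price of showing the boundary term $I_{2,N}\to0$ in $L^2(\Omega)$; you instead keep both spatial integrals, pass to the lag $z=y-x$, apply the ergodic theorem to the family of stationary fields $x\mapsto\Xi(z)\circ\theta_x$ for each fixed lag, and close with Minkowski plus dominated convergence in $z$, the integrable majorant $2\|\Xi(z)\|_2$ being controlled by exactly the same heat-kernel/$f(\R^d)<\infty$ computation that the paper uses for $I_{2,N}$ and for $\|\Psi(0)\|_2$ (and your observation that \eqref{E:f_finite} forces $\hat f\{0\}=0$ is correct, via the paper's equivalence with $f\{\|x\|<r\}=o(r^d)$). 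Two routine points deserve a sentence in a full write-up: the lag-by-lag step needs the mean ergodic theorem over the rectangles $A_N(z)$, which are fixed translates of $\prod_{i=1}^d[0,N-|z_i|]$ rather than cubes anchored at the origin (all sides tend to infinity, so the multiparameter mean ergodic theorem applies after using stationarity to remove the translate); and the Fubini/Minkowski manipulations require a jointly measurable version of $(x,z)\mapsto\Xi(z)\circ\theta_x$, the analogue of the continuity-in-probability/separability argument the paper runs for $H$ and $\Psi$. In exchange, your route dispenses with the boundary-term estimate, while the paper's route needs only a single invocation of the ergodic theorem and no uniform-in-$N$ domination over lags.
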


\begin{proof}[Proof of   Proposition \ref{P:L1Conv} in Case (i)]
	We may use \eqref{E:HatV} together with Fubini's theorem
	and the definition of $\mathcal{H}$ in order
	to decompose the quantity of interest $K_{N,t_1,t_2}$ defined in \eqref{E:KN}  as follows:
	\begin{align*}
		K_{N,t_1,t_2} &= N^d  \int_0^{t_1\wedge t_2} \d s  \int_{\R^d} \d z\
			D_{s, z}\mathcal{S}_{N,t_1}(g)
			\left(\E[D_{s,  \bullet }\mathcal{S}_{N,t_2}(g)\mid \mathcal{F}_s] *f \right)(z)\\
		&= N^{d} \int_{[0,N]^{d}}\d x \int_{[0,N]^{d}}\d y
			\int_0^{t_1 \wedge t_2}\d s  \int_{\R^d} \d z\
			D_{s, z}g(u(t_1\,,x))
			\left(  \E[D_{s, \bullet }g(u(t_2\,,y) \mid \mathcal{F}_s] *f \right)(z)\\
		&= I_{1,N} - I_{2,N},
	\end{align*}
	where
	\begin{align*}
		I_{1,N} &:= N^{d} \int_{[0,N]^{d}}\d x \int_{\R^d}\d y
			\int_0^{t_1\wedge t_2}\d s  \int_{\R^d} \d z\
			D_{s, z}g(u(t_1\,,x))
			\left(  \E[D_{s, \bullet }g(u(t_2\,,y) \mid \mathcal{F}_s] *f \right)(z),\\
		I_{2,N} &:= N^{d} \int_{[0,N]^{d}}\d x \int_{\R^d\setminus[0,N]^{d}}\d y
			\int_0^{t_1\wedge t_2}\d s  \int_{\R^d} \d z\
			D_{s, z}g(u(t_1\,,x))
			\left(  \E[D_{s, \bullet }g(u(t_2\,,y) \mid \mathcal{F}_s] *f \right)(z).
	\end{align*}

	The following shows that the asymptotic behavior of $K_{N,t_1,t_2}$ is the same as that of $I_{1,N}$.
	\bigskip

	\noindent{\bf Claim 1.} {\it
		$I_{2,N}\to 0$ in $L^2(\Omega)$ as $N\to\infty$.
		Thus, \eqref{E:L1Conv} holds if $I_{1,N} -\E I_{1,N}
		\to0$ in $L^2(\Omega)$ as $N\to\infty$.}\\

	\noindent{\bf Proof of Claim 1.} The second assertion of Claim 1 is a ready consequence of
	the first assertion and Jensen's inequality. It suffices to prove the first assertion then.
	We may appeal to \eqref{E:D<p2} with $c=C_{t_1,k,\varepsilon,\sigma}C_{t_2,k,\varepsilon,\sigma}$, $k=4$,
	and   $\varepsilon=1/2$ (say) as follows:
	\begin{align*}
		\| I_{2,N} \|_2& \le
			N^{-d}  \int_{[0,N]^d}   \d x \int_{\R^d\setminus[0,N]^d}\d y
			\int_0^{t_1\wedge t_2}\d s  \int_{\R^d} \d z\
			\|D_{s, z}g(u(t_1\,,x))\|_4
			\left(  \|D_{s, \bullet }g(u(t_2\,,y)\|_4 *f\right)(z)\\
 		 & \le  \frac{c[{\rm Lip } (g)]^2}{N^d}
			\int_{[0,N]^d}\d x \int_{\R^d\setminus[0,N]^d}\d y
			\int_0^{t_1\wedge t_2}\d s  \int_{\R^d} \d z\
			\bm{p}_{t_1-s}(x-z) (   \bm{p}_{t_2-s}*f)(z-y)\\
		&= \frac{c[{\rm Lip } (g)]^2}{N^d} \int_{[0,N]^d}\d x \int_{\R^d}\d y
			\int_0^{t_1\wedge t_2}\d s\  \bm{1}_{\R^d\setminus[0,N]^d}(y)
			\left(   \bm{p}_{t_1+t_2-2s}*f \right)(x-y)\\
		&= c[{\rm Lip } (g)]^2\int_0^{t_1\wedge t_2}\d s\int_{\R^d}\d z
			\left(   \bm{p}_{t_1+t_2-2s}*f \right)(z) \int_{\R^d}
			\d x\  \bm{1}_{[0,1]^d \cap (\R^d\setminus[0,1]^d+ \frac{z}{N})}(x),
	\end{align*}
	owing to the semigroup property of the heat kernel.
	Now $f$ is a finite  measure [see \eqref{E:f_finite}] and $ \int_{\R^d}
	\bm{1}_{[0,1]^d \cap (\R^d\setminus[0,1]^d+ \frac{z}{N})}(x)\,\d x
	\to0$ as $N\to\infty$ for every $z\in\R^d$. Therefore,
	the dominated convergence theorem ensures that the preceding
	displayed expression converges to zero as $N\to\infty$.
	This proves Claim 1.\qed\\

	In order to investigate the asymptotic behavior of
	$I_{1,N}$, we first define
	\[
		H_s(z) := \int_{\R^d} \E\left[D_{s, z}g(u(t_2\,,\alpha))
		\mid \mathcal{F}_s\right]\d\alpha
		\qquad\text{for all $s\in[0\,,t_2]$ and $z\in\R^d$}.
	\]
	Since $D_{s,z} g(u(t_2\,,y))=g'(u(t_2\,,y)) D_{s,z} u(t_2\,,y)$ for a.e.\
	$(s\,,z)\in(0\,,t_2)\times\R^d$, and since $y\mapsto D_{s,z}u(t_2\,,y)$ is continuous in $L^2(\Omega)$
	for a.e.\ $(s\,,z)\in(0\,,t_2)\times\R^d$ (this  can be proved by the same arguments as in \cite[Lemma 19]{Dalang1999}), Lemma \ref{L:Du} (applied
	with $\varepsilon=1/2$, say) implies
	that $H := \{ H_{s}(z) \} _{(s,z) \in [0,t] \times \R^d}$ is  an adapted   random field
	that satisfies
	\begin{equation}\label{E:HLip}
		\|    H_{s,z} \|_k \le
		C_{t_2,k,1/2,\sigma } {\rm Lip } (g)  \int_{\R^d}  \bm{p}_{t_2-s} (y-z)\,\d y
		= C_{t_2,k,1/2,\sigma} {\rm Lip } (g),
	\end{equation}
	uniformly for all $k\ge2$, $s\in(0\,,t_2)$, and $z\in\R^d$.
 	Our interest in the random field $H$ stems from the fact that $I_{1,N}$ can be written
	in terms of $H$ as follows:
	\begin{equation}\label{E:I_Psi}
		I_{1,N} = N^{-d}  \int_{[0,N]^d}\d x\int_0^{t_1 \wedge t_2}\d s
		\int_{\R^d}\d z\  D_{s, z}g(u(t_1\,,x)) ( H_{s} *f )(z)
 		= N^{-d}  \int_{[0,N]^d}\Psi(x)\,\d x,
	\end{equation}
	where
	\begin{equation} \label{E:Psi}
		\Psi(x) := \int_0^{t_1 \wedge t_2} \d s  \int_{\R^d} \d z\  D_{s, z}g(u(t_1\,,x))( H_{s} *f )(z).
	\end{equation}
	Similar estimates to those used in the proof of  \cite[Theorem 6.4]{CKNP}
	 and  \cite[Lemma 4.2]{CKNP2} show that $(s\,,z)\mapsto H_s(z)$
	has a version that is
	continuous in probability on $[t_1 \wedge t_2] \times \R^d$. In particular,  there also exists a measurable version
	of the random field $\Psi$, and it can be written as $ \Psi(x)= g'(u(t_1\,,x)) \Psi_1(x) $,
	where
	$x\mapsto\Psi_1(x) := \int_0^{t_1}\d s  \int_{\R^d}\d z\  D_{s, z}u(t_1\,,x)( H_{s} *f )(z)$
	is continuous in probability. This and a simple extension of
	Doob's separability theorem \cite{Doob}
	together imply that $H$, $\Psi_1$, and $\Psi$ all have Lebesgue measurable versions,
	which we feel free to use.\\

	\noindent\textbf{Claim 2.} {\it For every $x,y\in\R^d$,
		$\Psi(x+y)=\Psi(x)\circ\theta_y$ a.s.}\\

	\noindent\textbf{Proof of Claim 2.}
	We may write
	\begin{align*}
	 	\Psi(x+y) &=\int_0^{t_1\wedge t_2}\d s\int_{\R^d}\d \alpha\int_{\R^d} \d z\
			 D_{s, z}g(u(t_1\,,x+y))  \left(  \E[D_{s, \bullet }g(u(t_2\,,\alpha))
			\mid \mathcal{F}_s] *f \right)(z)\\
		&= \int_0^{t_1\wedge t_2}\d s\int_{\R^d}\d \alpha\int_{\R^d} \d z\
			 D_{s, z}g(u(t_1\,,x+y))  \left(  \E[D_{s, \bullet }g(u(t_2\,,\alpha+y))
			\mid \mathcal{F}_s] *f \right)(z).
	\end{align*}
	We have proven in \cite[Lemma 7.1]{CKNP} that  for all $t\ge 0$, $u(t\,,x+y)=u(t\,,x)\circ\theta_y$ a.s.
	Therefore, by Lemma \ref{L:D:shift},
	\begin{align*}
	 	\Psi(x+y)& =\int_0^{t_1 \wedge t_2}\d s\int_{\R^d}\d \alpha\int_{\R^d} \d z\
		 	\left(D_{s, z-y} g(u(t_1\,,x))\right)\circ\theta_y \\
			 & \quad \times  \left(
			\E\left[\left\{D_{s, \bullet -y}g(u(t_2\,,\alpha))\right\}\circ\theta_y
			\mid \mathcal{F}_s\right] *f \right)(z)\\
		& = \int_0^{t_1 \wedge t_2}\d s\int_{\R^d}\d \alpha\int_{\R^d} \d z\
		 	\left(D_{s, z} g(u(t_1\,,x))\right)\circ\theta_y \left(
			\left\{\E[D_{s, \bullet }g(u(t_2\,,\alpha))\right\}\circ\theta_y
			\mid \mathcal{F}_s] *f \right)(z),
	\end{align*}
	after a change of variables $[z-y\leftrightarrow z]$. Now, $\mathcal{F}_s$ is generated
	by all random variables
	of the form $\int_{(0,t)\times\R^d} \psi(s\,,x)\,\eta_y(\d s\,\d x)$ as
	$\psi$ ranges over $\mathcal{H}$.\footnote{Point being that $\mathcal{F}_s$ is
		generated equally well by the infinite-dimensional Brownian motion
		$\{W_t\circ\theta_y\}_{t\ge0}$ for any $y\in\R^d$; compare with \eqref{E:W}.}
	Because of this observation, it
	is now a simple exercise in measure theory to verify that
	$\{\E[D_{s, \bullet }g(u(t_2\,,\alpha))\}\circ\theta_y\mid\mathcal{F}_s]=
	\E[D_{s, \bullet }g(u(t_2\,,\alpha))\mid\mathcal{F}_s]\circ\theta_y$ a.s.,
	which in turn implies that
	\[
		\Psi(x+y) = \left(\int_0^{t_1\wedge t_2}\d s\int_{\R^d}\d \alpha\int_{\R^d} \d z\
	 	D_{s, z} g(u(t_1\,,x))
		\left(\E\left[ D_{s, \bullet }g(u(t_2\,,\alpha))
		\mid \mathcal{F}_s \right] *f \right)(z)\right)\circ\theta_y.
	\]
	almost surely. This proves Claim 2.\qed\\

	\noindent\textbf{Claim 3.} {\it $\Psi(0)\in L^2(\Omega)$}.\\

	\noindent\textbf{Proof of Claim 3.}
	We proceed as we did for Claim 1. Thanks to
	the definition \eqref{E:Psi} of $\Psi$ and Lemma \ref{L:Du},
	\begin{align*}
		\|\Psi(0)\|_2 &\le C_{t_1,4,1/2,\sigma}\lip(g)f(\R^d)\int_0^{t_1 \wedge t_2}\d s  \int_{\R^d} \d z
			\left\| D_{s, z}g(u(t_2\,,0))\right\|_4\\
		&\le C_{t_1,4,1/2,\sigma}[\lip(g)]^2f(\R^d)\int_0^{t_1\wedge t_2}\d s  \int_{\R^d} \d z
		\left\| D_{s, z}u(t_2\,,0)\right\|_4\\
		&\le  C_{t_1,4,1/2,\sigma} C_{t_2,4,1/2,\sigma} \left[ \lip(g)\right]^2f(\R^d)(t_1 \wedge t_2),
	\end{align*}
	where we used \eqref{E:HLip} in the first line (with $k=4$),
	the chain rule of Malliavin calculus (see Nualart \cite[Proposition 1.2.3]{Nualart}) in
	the second line, and Lemma \ref{L:Du} in the last line. This verifies Claim 3.\qed\\

	We can now complete the proof of the proposition in Case (i) as follows.
	Thanks to Claims 2 and 3, we may deduce from Theorem \ref{T:ergodic} that
	the $L^2(\Omega)$-random field $\{\Psi(x)\}_{x\in\R^d}$ is stationary and ergodic. Thus,
	\eqref{E:I_Psi} and the ergodic theorem together ensure that
	$I_{1,N} \to \E\Psi(0)$ as $N\to\infty$, a.s.\ and in $L^2(\Omega)$.
	This implies, among other things, that
	$I_{1,N}-\E I_{1,N} \to 0$ in $L^2(\Omega)$ as $N\to\infty$. Now apply Claim 1 to
	deduce \eqref{E:L1Conv} and hence Case (i) of Proposition \ref{P:L1Conv}.
\end{proof}

\begin{proof}[Proof of  Proposition \ref{P:L1Conv} in Cases (ii) and (iii)]
Cases (ii) and (iii) can be proved similarly.
	For {\bf Claim 1}, notice that for $i=1,2$,
	\[
		\Norm{D_{s,z}g(u(t_i\,,x))}_4
		\le
		\Norm{g'(u(t_i\,,x))}_{p}
		\Norm{D_{s,z}u(t_i\,,x)}_{p'}
		=
		\Norm{g'(u(t_i\,,0))}_{p}
		\Norm{D_{s,z}u(t_i\,,x)}_{p'},
	\]
	where  $p^{-1}+(p')^{-1} = 1/4$,
	and  the last equality is due to the stationarity of $x \mapsto u(t_i\,,x)$. Hence, the moment bound for
	$I_{2,N}$ is still valid with the constant
	$c[\lip(g)]^2 = C_{t_1,4,1/2,\sigma}C_{t_2,4,1/2,\sigma} [\lip(g)]^2$
	replaced by
	\[
		C_{t_1,p',1/2,\sigma}C_{t_2,p',1/2,\sigma} \Norm{g'(u(t_1,0))}_{p}\Norm{g'(u(t_2,0))}_{p}.
	\]
	The above constant is finite thanks to \eqref{E:MmCond_TV}. Hence, {\bf Claim 1} is true.
	{\bf Claim 2} still holds without any change. As for {\bf Claim 3}, the moment bounds for $\Psi(0)$
	are replaced with the following, using similar arguments as above:
	\[
		\Norm{\Psi(0)}_2 \le
		C_{t_1,p',1/2,\sigma}
		C_{t_2,p',1/2,\sigma}
		\Norm{g'(u(t_1\,,0))}_{p}
		\Norm{g'(u(t_2\,,0))}_{p}
		f(\R^d)
		\left(t_1\wedge t_2\right).
	\]
	With these changes, the proof of Cases (ii) and (iii)  follows the same arguments as Case (i).
\end{proof}

We now proceed with the following.

\begin{proof}[Proof of Theorem \ref{T:TV}]
	We first consider Case (i).
	Throughout, we choose and fix some $t>0$ and recall that $\lim_{N\to\infty}
	\mathbf{B}_{N,t}(g)=\mathbf{B}_t(g)>0$. In particular, we observe that
	$\mathbf{B}_{N,t}(g)>0$ for all sufficiently large $N$.
	Also, Remark \ref{R:IT} shows that $\sqrt{\mathbf{B}_{N,t}(g)}Z$
	converges in total variation to $\sqrt{\mathbf{B}_t(g)} Z$ as $N\to\infty$.
	Therefore, it suffices to prove that
	\begin{equation}\label{E:goal}
		\mathscr{E}_N:= d_{\rm TV}\left( N^{d/2}\mathcal{S}_{N,t}(g)
		~,~ \sqrt{\mathbf{B}_{N,t}(g)} Z\right) \to 0
		\qquad\text{as $N\to\infty$}.
	\end{equation}
	In view of  the representation of  $\mathcal{S}_{N,t}(g)$ given in \eqref{E:HatV},
	we may therefore apply \eqref{E:SM1} to see that
	\begin{equation}\label{E:K}
		\mathscr{E}_N\le \frac{2}{\mathbf{B}_{N,t}(g)}
		\E\left( \left| \mathbf{B}_{N,t}(g) - N^d \left<
		D\mathcal{S}_{N,t}(g) ~,~ v_{N,t}(g) \right>_{\mathcal{H} }  \right| \right),
	\end{equation}
	valid for all $N$ large enough to ensure that $\mathbf{B}_{N,t}(g)>0$.
	According to \eqref{E:SM2},
	$\E(\< D\mathcal{S}_{N,t}(g)\,,v_{N,t}(g) \>_{\mathcal{H}}) =\mathbf{B}_{t,N}(g)$. Therefore,
	in light of \eqref{E:goal} and \eqref{E:K}, and owing to Jensen's inequality,
	it suffices to prove that the variance of $ N^d\left< D\mathcal{S}_{N,t}(g) ~,~ v_{N,t}(g) \right>_{\mathcal{H}}$
	converges to zero as $N\to \infty$, which has been shown in Proposition \ref{P:L1Conv}.

	For Cases (ii) adn (iii), the limit \eqref{E:TV} can be proved in the same way as above and   \eqref{E:Var_finite} and \eqref{E:Var_Lim} hold due to Lemma \ref{lem4.4}. Finally,
Property $\mathbf{B}_t(g)>0$ in Case (iii) is a
	direct consequence of  Corollary \ref{L:CovFinite} below and \eqref{E:Var_finite}
\end{proof}

Proposition \ref{P:associate}  below is devoted to establish   the nondegeneracy of $\mathrm{B}_t(g)$ in Case (iii) of Theorem \ref{T:TV}.
Our next result is an application of Lemma  \ref{T:Positive_D} and the Clark-Ocone formula \eqref{E:Clark-Ocone}.
In some sense, the following says that the solution $u$ to \eqref{E:SHE} has a strict association property; see Appendix for more exploration in this direction.

\begin{proposition} \label{P:associate}
	Let  $u$ denote the solution to \eqref{E:SHE}  subject to
	$u(0)\equiv1$, and
	suppose that $\sigma(0)=0$, $\sigma(x)\ne 0$ for all $x>0$,
	and $f$ satisfies  the reinforced Dalang's condition  \eqref{E:Dalang2} for some $\alpha\in(0\,,1]$.
	Choose and fix $n$  space-time points
	$(t_1,x_1),\dots, (t_n,x_n)\in(0\,,\infty)\times\R^d$, and
	real-valued functions $g_1, g_2 \in C^1((0\,,\infty)^n)$ such that
	for every $i=1,\ldots,n$ and $j=1,2$:
	\begin{compactenum}
		\item $\partial_i g_j>0$; and
		\item $g_j(u(t_1,x_1),\dots,u(t_n,x_n)) \in L^2(\Omega)$
		and $\partial_i g_j(u(t_1,x_1),\ldots,u(t_n,x_n))\in \cup_{p>2} L^p(\Omega)$.
	\end{compactenum}
	Then, we have the strict positivity result,
	\begin{align} \label{E:CovAss}
		\Cov\left[g_1(u(t_1,x_1),\dots,u(t_n,x_n))
		~,~g_2(u(t_1,x_1),\dots,u(t_n,x_n))\right]
		>0.
	\end{align}
\end{proposition}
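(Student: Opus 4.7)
The plan is to combine the Clark--Ocone formula \eqref{E:Clark-Ocone} with the Walsh isometry to express $\Cov(F_1,F_2)$ as an integral of a non-negative integrand, and then to use the strict positivity of the Malliavin derivative (Theorem \ref{T:Positive_D}) together with the hypothesis $\partial_i g_j>0$ to show that this integrand does not vanish a.e.

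Writing $F_j:=g_j(u(t_1,x_1),\dots,u(t_n,x_n))$ for $j=1,2$, I would first verify via a multivariate extension of Lemma \ref{L:Sobolev} that $F_j\in\mathbb{D}^{1,2}$, with
\begin{equation*}
D_{s,z}F_j
=\sum_{i=1}^n \partial_i g_j(u(t_1,x_1),\dots,u(t_n,x_n))\,D_{s,z}u(t_i,x_i),
\qquad\text{a.s., for a.e.\ }(s,z).
\end{equation*}
The Sobolev inclusion is guaranteed by hypothesis~2, which supplies the $L^p$-integrability for some $p>2$ needed to run the truncation argument of Lemma \ref{L:Sobolev}. Set $T:=\min_{1\le i\le n}t_i$. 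Using the version of the Malliavin derivative provided by Theorem \ref{T:Positive_D}, we have $D_{s,z}u(t_i,x_i)>0$ a.s.\ for each $s\in(0,T)$, $z\in\R^d$, and $i=1,\dots,n$. Combined with the hypothesis that every $\partial_i g_j$ is strictly positive, this forces $D_{s,z}F_j>0$ a.s.\ for a.e.\ $(s,z)\in(0,T)\times\R^d$, and hence
\begin{equation*}
A^{(j)}_{s,z}:=\E\!\left[D_{s,z}F_j\,\big|\,\mathcal{F}_s\right]>0
\quad\text{a.s., for a.e.\ }(s,z)\in(0,T)\times\R^d,
\end{equation*}
since the conditional expectation of an a.s.\ strictly positive random variable is a.s.\ strictly positive.

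Applying the Clark--Ocone formula to $F_1-\E F_1$ and $F_2-\E F_2$ and invoking the Walsh isometry on the resulting stochastic integrals yields
\begin{equation*}
\Cov(F_1,F_2)
=\int_0^\infty\!\d s\int_{\R^d}\!\d z\int_{\R^d}\!f(\d y)\,
\E\!\left[A^{(1)}_{s,z}\,A^{(2)}_{s,z-y}\right].
\end{equation*}
The integrand is non-negative, and at any $(s,z,y)$ for which both $A^{(1)}_{s,z}$ and $A^{(2)}_{s,z-y}$ are a.s.\ strictly positive, its expectation is also strictly positive. The set of such triples inside $(0,T)\times\R^d\times\R^d$ has full $\d s\otimes\d z\otimes f(\d y)$-measure, and $f$ is a nonzero measure (otherwise $\eta\equiv 0$, $u\equiv 1$, and the proposition is vacuous). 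Since a non-negative integrable function that is strictly positive on a set of positive measure has a strictly positive integral, \eqref{E:CovAss} follows. The step requiring the most care is the multivariate chain-rule/Sobolev inclusion for $F_j$, which is a routine adaptation of Lemma \ref{L:Sobolev}: one truncates each $g_j$ componentwise, applies the standard chain rule for Lipschitz functions, and passes to the limit using the $L^p$-bounds on $g_j(\vec u)$ and $\partial_i g_j(\vec u)$ furnished by hypothesis~2.
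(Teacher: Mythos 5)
Your proposal is correct and follows essentially the same route as the paper: Clark--Ocone plus the chain rule of Lemma \ref{L:Sobolev} to represent the covariance as an integral of products of conditional expectations of $\sum_i \partial_i g_j(\cdots)\,D_{s,z}u(t_i,x_i)$, and then strict positivity of the Malliavin derivative (Theorem \ref{T:Positive_D}) together with $\partial_i g_j>0$ to conclude the integrand is strictly positive on a set of positive measure. The only points the paper spells out that you leave implicit are the a.s.\ strict positivity of $u(t,x)$ (via Chen and Huang's Theorem 1.6, needed so that $\partial_i g_j(u(t_1,x_1),\dots,u(t_n,x_n))$ makes sense for $g_j\in C^1((0,\infty)^n)$) and the finiteness of the covariance integral, checked there with H\"older's inequality and \eqref{E:D<p2}; neither affects the validity of your argument.
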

\begin{proof}
	Denote the covariance in \eqref{E:CovAss} by $I$.  According to Lemma \ref{L:Sobolev}, Assumption 2
	ensures that $g_j(u(t_1,x_1),\dots,u(t_n,x_n))\in\mathbb{D}^{1,2}$ for $j=1,2$.
	Therefore, we may appeal to the Clark-Ocone formula \eqref{E:Clark-Ocone} in order to obtain
	\begin{align}\begin{aligned} \label{E:I}
		I = \E\Bigg( & \int_0^{\infty} \d s \int_{\R^d}\d y\int_{\R^d}  f(\d z) \\
			     & \times \E \left[\sum_{i=1}^{n} \partial_i g_1
			     (u(t_1,x_1),\dots,u(t_n,x_n)) D_{s,y} u(t_i,x_i) \mathbf{1}_{[0,t_i]}(s)
			     \ \bigg|\ \mathcal{F}_s \right]\\
		             & \times \E \left[\sum_{j=1}^{n}
		             \partial_j g_2(u(t_1,x_1),\dots,u(t_n,x_n)) D_{s,y+z} u(t_j,x_j)
		             \mathbf{1}_{[0,t_j]}(s) \ \bigg|\ \mathcal{F}_s \right] \Bigg).
	\end{aligned}\end{align}
	The integral $I$ is well defined and finite.
	Indeed, Assumption 2, H\"older's inequality and the estimate \eqref{E:D<p2}
	together yield
	\begin{align} \notag
		|I| \le & C \int_0^{\infty}\d s \int_{\R^d}\d y \int_{\R^d} f(\d z)
			\left(\sum_{i=1}^n \bm{p}_{t_i-s}(x_i-y)\mathbf{1}_{[0,t_i]}(s)\right)
			\left(\sum_{j=1}^n \bm{p}_{t_j-s}(x_j-y-z)\mathbf{1}_{[0,t_j]}(s)\right)\\  \label{E:EST1}
		\le &C \sum_{i,j=1}^n \int_0^{t_i\wedge t_j} \d s \int_{\R^d}\d y \int_{\R^d} f(\d z)\
			 \bm{p}_{t_i-s}(x_i-y)
		 	\bm{p}_{t_j-s}(x_j-y-z)< \infty.
	\end{align}

	Since $\sigma\left(0\right)=0$,  $u(0)\equiv1$, and $\sigma(1)\ne 0$, and because
	$f$ satisfies the reinforced Dalang's condition \eqref{E:Dalang2} for some $\alpha\in(0\,,1]$,
	one can apply Theorem 1.6 of  Chen and Huang \cite{CH19}
	in order to see  that $u(t\,,x)>0$ almost surely for all $t>0$ and $x\in\R^d$.
	Hence, by Assumption 1 and Theorem \ref{T:Positive_D}, the two conditional expectations
	in \eqref{E:I} are strictly positive a.s. This proves that
	$I>0$, which is another way to state the proposition.
\end{proof}

Notice that Proposition  \ref{P:associate} remains valid
if in its assumption 1 we require that all partial derivatives are strictly negative,
instead of strictly positive. In this way, we can include examples
such as $g(x_1,\dots,x_n)=\prod_{i=1}^{n}  x_i^{\alpha_i}$
where $\alpha_1,\ldots,\alpha_n<0$.

The following corollary is an immediate consequence of Proposition \ref{P:associate}.
\begin{corollary} \label{L:CovFinite}
	Let  $u$ denote the solution to \eqref{E:SHE} subject to $u(0)\equiv1$ and $\sigma(0)=0$, and
	suppose that $f$ satisfies both condition \eqref{E:f_finite} and the reinforced Dalang's condition \eqref{E:Dalang2} for some $\alpha\in(0\,,1]$.
	Choose and fix some $g\in C^1(0\,,\infty)$ that satisfies \eqref{E:MmCond_TV}
	for some number $k>2$.
	 Suppose, in addition:
	\begin{compactenum}
		\item $\sigma(x)\ne 0$ for all $x>0$; and
		\item $g'$ is either strictly positive or strictly negative on $(0\,,\infty)$;
	\end{compactenum}
	then $\mathbf{B}_{t_1,t_2}\left(g\right) >0$ for all $t_1, t_2>0$.
\end{corollary}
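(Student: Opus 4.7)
The plan is to show that the integrand in the definition of $\mathbf{B}_{t_1,t_2}(g)$ -- namely $x\mapsto\Cov[g(u(t_1,x))\,,g(u(t_2,0))]$ -- is strictly positive pointwise in $x$, and then deduce that its integral over $\R^d$ is strictly positive. Strict pointwise positivity is essentially an instance of Proposition \ref{P:associate} applied with $n=2$, so the bulk of the work has already been done.

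More concretely, for each fixed $x\in\R^d$, I would apply the argument of Proposition \ref{P:associate} to the two space-time points $(t_1,x)$ and $(t_2,0)$ with $g_1(y_1,y_2)=g(y_1)$ and $g_2(y_1,y_2)=g(y_2)$. These $g_j$'s do not quite satisfy the strict-positivity hypothesis on \emph{all} partial derivatives required by Proposition \ref{P:associate} (since $\partial_2 g_1=\partial_1 g_2=0$), but the proof of that proposition goes through verbatim: the two inner conditional expectations appearing in the Clark--Ocone expansion \eqref{E:I} reduce to
\[
\E\bigl[g'(u(t_1,x))\, D_{s,y} u(t_1,x)\,\mathbf{1}_{[0,t_1]}(s)\mid\mathcal{F}_s\bigr]
\quad\text{and}\quad
\E\bigl[g'(u(t_2,0))\, D_{s,y+z} u(t_2,0)\,\mathbf{1}_{[0,t_2]}(s)\mid\mathcal{F}_s\bigr].
\]
By assumption 1 of the corollary together with Theorem \ref{T:Positive_D}, the Malliavin derivatives $D_{s,\cdot}u(t_i,\cdot)$ are strictly positive a.s.\ for $s<t_i$; by assumption 2, $g'$ has constant sign on $(0,\infty)$. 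Hence each conditional expectation inherits the sign of $g'$, so their product is strictly positive on $\{s<t_1\wedge t_2\}$. Since \eqref{E:f_finite} gives $f(\R^d)>0$, the outer triple integral in \eqref{E:I} is strictly positive, yielding $\Cov[g(u(t_1,x))\,,g(u(t_2,0))]>0$ for every $x\in\R^d$. Integrability of this integrand (needed to make the Clark--Ocone step rigorous) follows from \eqref{E:Var_finite} via Lemma \ref{lem4.4}, and Lemma \ref{L:Sobolev}(i) combined with \eqref{E:MmCond_TV} supplies the Sobolev regularity $g(u(t_i,\cdot))\in\mathbb{D}^{1,2}$ needed to apply the Clark--Ocone formula.

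To pass from pointwise strict positivity to $\mathbf{B}_{t_1,t_2}(g)>0$, I would simply observe that the nondecreasing family of measurable sets $E_n:=\{x\in\R^d:\Cov[g(u(t_1,x))\,,g(u(t_2,0))]>1/n\}$ exhausts $\R^d$; since $\R^d$ has infinite Lebesgue measure, at least one $E_n$ has positive measure, and together with \eqref{E:Var_finite} this forces $\mathbf{B}_{t_1,t_2}(g)\ge n^{-1}\lambda(E_n)>0$. The only genuinely delicate step is verifying that the proof of Proposition \ref{P:associate} really does survive the relaxation of its hypothesis when some $\partial_i g_j$ vanish; but since only the sign structure of the two conditional expectations is ever used, and that sign structure is preserved here, no essentially new argument is required.
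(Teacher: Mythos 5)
Your proposal is correct and takes essentially the same route as the paper, which simply declares the corollary an immediate consequence of Proposition \ref{P:associate} (together with the finiteness in \eqref{E:Var_finite}); that is exactly your scheme of pointwise strict positivity of $x\mapsto\Cov[g(u(t_1,x)),g(u(t_2,0))]$ followed by integration. Your additional remark that the hypotheses of Proposition \ref{P:associate} are not literally met (since $\partial_2 g_1=\partial_1 g_2=0$) but that its Clark--Ocone sign argument survives unchanged is a correct and worthwhile elaboration of what the paper leaves implicit.
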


Note that all examples in \eqref{E:Example}
satisfy the conditions of the above corollary.

\section{Functional CLT: Proof of Theorem \ref{T:FCLT}}  \label{S:FCLT}

In order to show Theorem \ref{T:FCLT} we will establish the weak convergence of the finite-dimensional distributions, as well as tightness.
We start with a couple of technical lemmas.

\begin{lemma} \label{L:h1}
	Fix $0\le a< b \le \tau$, $k\ge 2$. Let  $\{\Phi(s\,,y)\}_{ (s,y) \in [0,T]\times \R^d}$
	be an adapted and measurable random field such that,
	for some constants $M\ge 1$ and $K>0$, and for some function $\varphi: [0\,,\tau] \rightarrow \R_+$,
	\begin{equation} \label{E:h1}
		\| \Phi(s,y) \| _k \le K N^{-d}  \varphi(s)\int_{[0,N]^d} \bm{p}_{M(\tau-s)} (x-y)\, \d x,
	\end{equation}
	for all
	$(s\,,y) \in [0\,,\tau] \times \R^d$. Then,
	\[
		\left\| \int_{[a,b] \times \R^d} \Phi(s\,,y) \,\eta( \d s\, \d y) \right\|_k
		\le 2  \sqrt{ k f(\R^d)}  K N^{-d/2} \left( \int_a^b \varphi^2(s)\, \d s\right)^{1/2}.
	\]
\end{lemma}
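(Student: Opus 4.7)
The plan is to apply the Burkholder–Davis–Gundy inequality in the Walsh–Dalang stochastic integration framework, with the Carlen–Kreĭ sharp constants, exactly as was done in the proof of Proposition \ref{P:secondD}. Writing $I := \int_{[a,b]\times\R^d} \Phi(s,y)\,\eta(\d s\,\d y)$, the BDG bound gives
\[
	\|I\|_k^2 \le 4k \left\| \int_a^b \d s \int_{\R^d} \d y \int_{\R^d} f(\d z)\, \Phi(s,y)\,\Phi(s,y-z)\right\|_{k/2}.
\]
The integrand inside the $L^{k/2}$-norm is nonnegative, so I may pull the norm inside by Minkowski's inequality, and then estimate the pointwise $L^{k/2}$-norm of $\Phi(s,y)\Phi(s,y-z)$ by the Cauchy–Schwarz inequality in $\Omega$:
\[
	\| \Phi(s,y)\Phi(s,y-z) \|_{k/2} \le \|\Phi(s,y)\|_k\,\|\Phi(s,y-z)\|_k.
\]

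Next I apply the hypothesis \eqref{E:h1} to each factor. After integrating over $y \in \R^d$, the Chapman–Kolmogorov identity for the heat kernel gives
\[
	\int_{\R^d} \bm{p}_{M(\tau-s)}(x_1-y)\, \bm{p}_{M(\tau-s)}(x_2-y+z)\,\d y = \bm{p}_{2M(\tau-s)}(x_1-x_2+z),
\]
so
\[
	\int_{\R^d}\|\Phi(s,y)\|_k\|\Phi(s,y-z)\|_k\,\d y \le K^2 N^{-2d}\varphi(s)^2 \int_{[0,N]^d}\d x_1\int_{[0,N]^d}\d x_2\, \bm{p}_{2M(\tau-s)}(x_1-x_2+z).
\]
Integrating the heat kernel first in $x_1$ over $\R^d$ yields a factor of $1$, so the double integral is at most $N^d$. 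Integrating then against $f(\d z)$ contributes a factor $f(\R^d)$, uniformly in $s$. Altogether,
\[
	\int_a^b\d s\int_{\R^d}\d y\int_{\R^d} f(\d z)\,\|\Phi(s,y)\|_k\|\Phi(s,y-z)\|_k \le K^2 f(\R^d)\, N^{-d}\int_a^b \varphi(s)^2\,\d s.
\]
Taking the square root and multiplying by $2\sqrt{k}$ from BDG gives precisely the claimed bound.

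The only conceptual subtlety — and the one step worth double-checking in the write-up — is the Minkowski step, which requires $k/2 \ge 1$; this is exactly the standing assumption $k\ge 2$. Everything else is routine application of Fubini's theorem and the semigroup property of the heat kernel, so there is no real obstacle beyond bookkeeping.
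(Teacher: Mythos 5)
Your argument is correct and is essentially the paper's own proof: the Burkholder--Davis--Gundy inequality with constant $2\sqrt{k}$, Minkowski's integral inequality plus Cauchy--Schwarz to reduce to the $L^k(\Omega)$-norms of $\Phi$, then the hypothesis \eqref{E:h1}, the semigroup property of the heat kernel, and the crude bound $\int_{[0,N]^{2d}}\d x_1\,\d x_2\int_{\R^d}f(\d z)\,\bm{p}_{2M(\tau-s)}(x_1-x_2+z)\le N^d f(\R^d)$. (One cosmetic remark: the nonnegativity of the integrand is not needed, and in fact $\Phi(s,y)\Phi(s,y-z)$ need not be nonnegative pointwise; Minkowski's integral inequality requires only $k/2\ge 1$, which is exactly your observation $k\ge 2$, so the step stands as written.)
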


\begin{proof}
	By the BDG inequality, the estimate \eqref{E:h1}
	and the semigroup property of the heat kernel together yield
	\begin{align*}
		&\left\| \int_{[a,b] \times \R^d} \Phi(s\,,y)\, \eta( \d s\, \d y) \right\|_k^2 \le 4k
			\int_a ^b \d s   \int_{\R^{2d}}  \d y\, f(\d z) \| \Phi(s\,,y) \| _k \| \Phi(s\,,y-z) \| _k  \\
		&  \qquad \le  4k K^2   N^{-2d} \int_{[0,N]^{2d}} \d x_1 \d x_2 \int_a ^b \d s\
			\varphi^2(s)  \int_{\R^{2d}}  \d y\ f(\d z)\
			\bm{p}_{M(\tau-s)} (x_1-y) \bm{p}_{M(\tau-s)} (x_2-y+z) \\
		& \qquad =4k K^2   N^{-2d} \int_{[0,N]^{2d}} \d x_1 \d x_2 \int_a ^b  \d s\
			\varphi^2(s)   \int_{\R^{d}}  f(\d z)\ \bm{p}_{2M(\tau-s)} (x_1-x_2 -z) \\
		&\qquad \le   4k K^2 N^{-d} f(\R^d) \int_a^b \varphi^2(s)\, \d s.
	\end{align*}
	The proof is complete.
\end{proof}

\begin{lemma} \label{L:4Holder}
	For every $\rho \in [0\,,1)$ there exists a constant $C_\rho$ such that
	\[
		\int_0^1 (1-x) ^{-\rho} \e^{-\lambda x (1-x)}\, \d x  \le C_\rho ( 1\wedge \lambda^{\rho-1} )
		\qquad\text{ for every $\lambda>0$}.
	\]
\end{lemma}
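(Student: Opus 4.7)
The plan is to obtain the two competing bounds separately and then combine them via a minimum. Since $e^{-\lambda x(1-x)}\le 1$ uniformly in $\lambda>0$, the trivial estimate
\[
    \int_0^1 (1-x)^{-\rho} e^{-\lambda x(1-x)}\,\d x
    \le \int_0^1 (1-x)^{-\rho}\,\d x
    = \frac{1}{1-\rho}
\]
already takes care of the ``$1$'' side of the minimum, and in particular handles all $\lambda\in(0\,,1]$ because then $1\wedge\lambda^{\rho-1}=1$. So the remaining task is to produce the bound $C_\rho\lambda^{\rho-1}$ in the regime $\lambda\ge 1$.

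For $\lambda\ge 1$, I would split the integral at the midpoint $x=1/2$. On $[0\,,1/2]$ the factor $(1-x)^{-\rho}$ is bounded above by $2^\rho$, and $x(1-x)\ge x/2$; therefore
\[
    \int_0^{1/2}(1-x)^{-\rho} e^{-\lambda x(1-x)}\,\d x
    \le 2^\rho \int_0^{1/2} e^{-\lambda x/2}\,\d x
    \le \frac{2^{\rho+1}}{\lambda}
    \le \frac{2^{\rho+1}}{\lambda^{1-\rho}},
\]
the last inequality using $\rho\ge 0$ and $\lambda\ge 1$. On $[1/2\,,1]$ I would change variables $y=1-x$, in which case $x(1-x)=(1-y)y\ge y/2$ on $[0\,,1/2]$, and then change variables once more by $u=\lambda y/2$ to convert the resulting integral into an incomplete Gamma integral:
\[
    \int_{1/2}^1 (1-x)^{-\rho} e^{-\lambda x(1-x)}\,\d x
    \le \int_0^{1/2} y^{-\rho} e^{-\lambda y/2}\,\d y
    = \frac{2^{1-\rho}}{\lambda^{1-\rho}} \int_0^{\lambda/4} u^{-\rho} e^{-u}\,\d u
    \le \frac{2^{1-\rho}\Gamma(1-\rho)}{\lambda^{1-\rho}},
\]
which is finite precisely because $\rho<1$.

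Adding the two halves and combining with the trivial bound gives
\[
    \int_0^1 (1-x)^{-\rho} e^{-\lambda x(1-x)}\,\d x
    \le \min\!\left(\frac{1}{1-\rho}\,,\, \frac{2^{\rho+1}+2^{1-\rho}\Gamma(1-\rho)}{\lambda^{1-\rho}}\right)
    \le C_\rho\bigl(1\wedge \lambda^{\rho-1}\bigr),
\]
with $C_\rho:=\max\!\bigl(1/(1-\rho)\,, 2^{\rho+1}+2^{1-\rho}\Gamma(1-\rho)\bigr)$, as required. There is no real obstacle here; the only subtle point is that one needs $\rho<1$ to ensure integrability of $u^{-\rho}e^{-u}$ near the origin (equivalently, finiteness of $\Gamma(1-\rho)$), which is exactly the hypothesis of the lemma.
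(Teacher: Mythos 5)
Your proof is correct and follows essentially the same route as the paper: split the integral at $x=1/2$, use $x(1-x)\ge x/2$ on the first half (with $(1-x)^{-\rho}\le 2^\rho$) and $x(1-x)\ge(1-x)/2$ on the second, and reduce to a Gamma-type integral, with the trivial bound $1/(1-\rho)$ covering $\lambda\le 1$. The only cosmetic difference is that the paper handles both halves at once by bounding them with $(2^\rho+1)\int_0^\infty x^{-\rho}\e^{-\lambda x/2}\,\d x$, which is immaterial.
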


\begin{proof}
	 Without loss of generality, we may -- and will -- assume
	that $\lambda\ge1$. Now simple computations allow us to write
	\begin{align*}
		\int_0^1 (1-x) ^{-\rho} \e^{-\lambda x (1-x)}\, \d x &=
			\int_0^{1/2} (1-x) ^{-\rho} \e^{-\lambda x (1-x)} \,\d x
			+ \int_{1/2}^1 (1-x) ^{-\rho} \e^{-\lambda x (1-x)}\, \d x \\
		 & \le  2^\rho  \int_0^{1/2} \e^{ -\lambda x/2}\, \d x +
		 	\int_{1/2}^1 (1-x) ^{-\rho} \e^{-\lambda  (1-x)/2}\, \d x \\
		 & \le  (2^\rho+1)
		 	\int_0^{ \infty}  x^{-\rho} \e^{ -\lambda x/2}\, \d x
			 < 4\Gamma(1-\rho) \lambda^{\rho-1}.
	\end{align*}
	 This provides the desired estimate.
\end{proof}

According to Sanz-Sol\'e and Sarr\`a  \cite{SS},
the reinforced Dalang's condition \eqref{E:Dalang2}
implies  the following H\"older continuity of the solution:
\begin{equation} \label{E:uHolder}
	\sup_{x\in \R^d} \|  u(t_2\,,x) - u(t_1\,,x) \| _p \le
	C_{T,p,\gamma}   |t_2-t_1|^\gamma,
\end{equation}
 valid for all $t_1,t_2 \in [0\,,T]$, $\gamma \in (0\,, \alpha/2)$, and $p\ge 2$.
The next proposition  provides  moment estimates which are well tailored to establish tightness.

\begin{proposition}  \label{P:Holder}
	Assume that $f$ satisfies both \eqref{E:f_finite} and the reinforced Dalang's condition \eqref{E:Dalang2} for some $\alpha\in (0,1]$.
	Then we have the following two cases: \\
	(i) If $g\in  C^1(\R)$ is such that $g'$ is H\"older continuous of order $\delta \in (0\,,1]$,
	then for every $T>0$,   $\gamma  \in (0\,,\alpha/2)$,  and $k\ge 2\vee \delta^{-1}$,
	there exists a number $L>0$ depending on $T, k,\sigma, f,  \gamma$ and $g$ such that
	\[
		\sup_{0\le t_1, t_2 \le T}
		\E \left( | \mathcal{S}_{N,t_2}(g) -\cS_{N,t_1}(g) |^k \right)
		\le  L| t_2-t_1| ^{k \gamma\delta} N^{-kd/2}.
	\]
	(ii) If $\sigma(0)=0$ and if $g\in C^2 (0\,,\infty)$ is such that $g''$ is monotone over $(0\,,\infty)$ and
	condition \eqref{E:MmCond_FCLT} is satisfied, then
	for every $T>0$, $\gamma \in (0\,,\alpha/2)$ and $k\ge 2$
	there exists a number $L>0$ depending on $T, k,\sigma, f,
	\gamma$ and $g$ such that
	\[
		\sup_{0\le t_1, t_2 \le T} \E \left( |
		\mathcal{S}_{N,t_2}(g) -\cS_{N,t_1}(g) |^k \right) \le  L| t_2-t_1| ^{k  \gamma} N^{-kd/2}.
	\]
\end{proposition}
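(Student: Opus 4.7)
Assume without loss of generality that $0\le t_1<t_2\le T$. In either case, $g'(u(t,x))\in\cap_{p\ge 2}L^p(\Omega)$ -- in case (i) because H\"older continuity of $g'$ forces at most polynomial growth and $u(t,x)$ has all moments, and in case (ii) by assumption \eqref{E:MmCond_FCLT} -- so Lemma \ref{L:Sobolev} together with Lemma \ref{L:Du} places $F_N(t):=N^{-d}\int_{[0,N]^d}g(u(t,x))\,\d x$ in $\mathbb{D}^{1,k}$. The Clark-Ocone formula \eqref{E:Clark-Ocone} then gives
\begin{align*}
	\mathcal{S}_{N,t_2}(g)-\mathcal{S}_{N,t_1}(g)=\int_{\R_+\times\R^d}\Phi_N(s,y)\,\eta(\d s\,\d y),
\end{align*}
where $\Phi_N(s,y):=N^{-d}\int_{[0,N]^d}\E\bigl[D_{s,y}(g(u(t_2,x))-g(u(t_1,x)))\mid\mathcal{F}_s\bigr]\,\d x$. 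My plan is to split the $s$-integration into $(t_1,t_2]$ and $[0,t_1]$, establish for each piece a pointwise bound on $\|\Phi_N(s,y)\|_k$ in the form \eqref{E:h1}, and then invoke Lemma \ref{L:h1}.

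For $s\in(t_1,t_2]$ only the $t_2$-term contributes. The chain rule, H\"older's inequality (pairing moments of $g'(u(t_2,\cdot))$ with $\|D_{s,y}u(t_2,x)\|_{p'}$ for suitable $p,p'$ with $p^{-1}+(p')^{-1}=k^{-1}$), Jensen's inequality for the conditional expectation, and Lemma \ref{L:Du} produce the bound $\|\cdot\|_k\le C\bm{p}_{t_2-s}(x-y)$. Lemma \ref{L:h1} with $\varphi\equiv C$ then yields an $L^k$-bound of order $(t_2-t_1)^{1/2}N^{-d/2}$; since $t_2-t_1\le T$ and $\gamma\delta\le\gamma<\alpha/2\le 1/2$, this dominates the target rate $(t_2-t_1)^{\gamma\delta}N^{-d/2}$. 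For $s\in[0,t_1]$ I split
\begin{align*}
	&D_{s,y}[g(u(t_2,x))-g(u(t_1,x))]\\
	&\qquad=[g'(u(t_2,x))-g'(u(t_1,x))]D_{s,y}u(t_2,x)+g'(u(t_1,x))[D_{s,y}u(t_2,x)-D_{s,y}u(t_1,x)].
\end{align*}
The first summand is handled by combining the H\"older continuity of $g'$ (of order $\delta$ in case (i); in case (ii), the mean value theorem plus monotonicity of $g''$ gives $|g''(\xi)|\le|g''(u(t_1,x))|+|g''(u(t_2,x))|$ for $\xi$ between $u(t_i,x)$, yielding an effective $\delta=1$) with the time-regularity estimate \eqref{E:uHolder}, H\"older's inequality, and Lemma \ref{L:Du}, producing $\|\cdot\|_k\lesssim(t_2-t_1)^{\gamma\delta}\bm{p}_{t_2-s}(x-y)$; a further application of Lemma \ref{L:h1} closes this sub-piece.

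The main obstacle is the second summand, which requires a H\"older-in-time estimate for $D_{s,y}u$ itself. I will obtain it by subtracting the mild equations for $D_{s,y}u(t_2,x)$ and $D_{s,y}u(t_1,x)$, yielding a heat-kernel-difference term $[\bm{p}_{t_2-s}(x-y)-\bm{p}_{t_1-s}(x-y)]\sigma(u(s,y))$, a stochastic integral on $[s,t_1]$ whose kernel is again the heat-kernel difference, and a stochastic integral on $[t_1,t_2]$ with the standard kernel. A BDG-based Picard iteration in the spirit of the proofs of Lemma \ref{L:Du} and Proposition \ref{P:secondD}, together with the semigroup identity $\bm{p}_{t-s}(x-y)\bm{p}_{s}(y-z)=\bm{p}_{t}(x-z)\bm{p}_{s(t-s)/t}(\cdot)$, reduces the problem to controlling integrals of the form $\int_0^1(1-x)^{-\rho}\e^{-\lambda x(1-x)}\,\d x$ -- precisely the content of Lemma \ref{L:4Holder}, which gives a bound of order $\lambda^{\rho-1}$. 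Choosing $\lambda$ as in the proof of Lemma \ref{L:Du} and $\rho=1-\gamma$ for any $\gamma\in(0,\alpha/2)$ delivers $\|D_{s,y}u(t_2,x)-D_{s,y}u(t_1,x)\|_{p'}\lesssim(t_2-t_1)^{\gamma}[\bm{p}_{M(t_2-s)}(x-y)+\bm{p}_{M(t_1-s)}(x-y)]$. Pairing with $\|g'(u(t_1,x))\|_p$ via H\"older and applying Lemma \ref{L:h1} twice (once per heat kernel summand) closes case (i); case (ii) follows the same scheme with $\delta=1$. Raising the resulting $L^k$-bounds to the $k$th power yields the stated inequalities.
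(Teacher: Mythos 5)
Your overall architecture is the same as the paper's: Clark--Ocone, splitting the time integral at $t_1$, the decomposition of the $[0,t_1]$ integrand into a ``$g'$-increment'' piece and a ``Malliavin-derivative-increment'' piece, and the use of Lemma \ref{L:h1} and Lemma \ref{L:4Holder}. The treatment of the $(t_1,t_2]$ piece and of the $g'$-increment piece (H\"older continuity of $g'$ in case (i); mean value theorem plus monotonicity of $g''$ in case (ii), combined with \eqref{E:uHolder}) matches the paper and is fine.

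The gap is in your key intermediate claim
\[
\left\| D_{s,y}u(t_2,x)-D_{s,y}u(t_1,x)\right\|_{p'}\lesssim (t_2-t_1)^{\gamma}\left[\bm{p}_{M(t_2-s)}(x-y)+\bm{p}_{M(t_1-s)}(x-y)\right],
\]
asserted uniformly in $s\in[0,t_1)$ with a constant independent of $t_2-t_1$. This is false: the deterministic leading term of the difference is $\sigma(u(s,y))\left[\bm{p}_{t_1-s}(x-y)-\bm{p}_{t_2-s}(x-y)\right]$, and at $x=y$ with $t_1-s\ll t_2-t_1$ the left side is of order $(t_1-s)^{-d/2}$ while your right side is only of order $(t_2-t_1)^{\gamma}(t_1-s)^{-d/2}$; since $\|\sigma(u(s,y))\|_{p'}$ is not small, no such uniform bound can hold. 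The correct estimate (the paper uses the heat-kernel time-difference bound \eqref{E:time} of Chen--Huang) necessarily carries the singular factor $\left((t_2-t_1)/(t_1-s)\right)^{\gamma\delta}$, and the whole point of the weight $\varphi$ in Lemma \ref{L:h1} is to absorb exactly this $s$-dependent singularity: one feeds in $\varphi(s)=(t_1-s)^{-\gamma\delta}\mathbf{1}_{[0,t_1)}(s)$, which is square-integrable because $2\gamma\delta<1$. Your invocation of Lemma \ref{L:h1} ``once per heat kernel summand'' with an $s$-independent prefactor therefore rests on an estimate that cannot be proved. In addition, your parameter choice $\rho=1-\gamma$ in Lemma \ref{L:4Holder} is incompatible with the reinforced Dalang condition when $\gamma<1-\alpha$: the resulting spectral integral requires $\int_{\|\xi\|\ge 1}\|\xi\|^{2(\rho-1)}\hat f(\d\xi)<\infty$, i.e.\ $\rho\le\alpha$, which is why the paper takes $\rho=2\gamma\delta<\alpha$ (and $\rho=0$ for the $[t_1,t_2]$ stochastic-integral term, where the factor $(t_2-t_1)^{\alpha/2}$ is extracted directly from \eqref{E:Dalang2}). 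With the singular-weight bookkeeping and the choice $\rho=2\gamma\delta$ restored, your outline becomes the paper's proof; no genuinely new route is involved, and no Picard iteration for the derivative increment is needed, since the increment is expressed through $D_{s,y}u$ itself and one application of BDG together with Lemma \ref{L:Du} suffices.
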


\begin{proof}[Proof of Case (i) of Proposition \ref{P:Holder}]
	Fix $\rho  \in (0\,,1/2)$. For all  $N>0$ and $ 0\le  t_1 \le t_2  \le T $
	we can write using Clark-Ocone formula \eqref{E:Clark-Ocone} and stochastic Fubini's theorem
	\begin{align*}
		\cS_{N,t_2} (g)- \cS_{N,t_1} (g)
			&=  N^{-d} \int _{[0,N]^d} [ g(u(t_2\,,x)) - \E(g(u(t_2\,,x)))
			- g(u(t_1\,,x))+\E(g(u(t_1\,,x)))  ] \,\d x \\
		&= \int_{[0,t_1] \times \R^d} \mathcal{A}(s\,,y)\, \eta (\d s \, \d y)
			+ \int_{[t_1,t_2] \times \R^d} \mathcal{B}(s\,,y)\, \eta (\d s \, \d y),
	\end{align*}
	where
	\begin{align*}
		\mathcal{A}(s\,,y) & := N^{-d}  \int_{[0,N]^d}
			\E \left[  D_{s,y}  (g(u(t_2,x)) -g(u(t_1,x)) ) \mid \mathcal{F}_s \right] \d x
			&  [0\le s\le t_1],\\
		\mathcal{B}(s\,,y) & := N^{-d}  \int_{[0,N]^d}
			\E \left[  D_{s,y}  g(u(t_2,x))  \mid \mathcal{F}_s \right] \d x
			& [t_1\le s\le t_2].
	\end{align*}
	We can make the additional decomposition,
	$\mathcal{A}(s\,,y)=: \mathcal{A}_1(s\,,y) + \mathcal{A}_2(s\,,y)$, where
	\begin{align*}
		\mathcal{A}_1(s\,,y) & := N^{-d}  \int_{[0,N]^d}
			\E \left[  (g' (u(t_2,x)) -g' (u(t_1,x)))  D_{s,y} u(t_2,x) \mid \mathcal{F}_s \right] \d x, \\
	 	\mathcal{A}_2(s\,,y) & := N^{-d}  \int_{[0,N]^d}
			\E \left[   g' (u(t_2,x)) [ D_{s,y} u(t_1,x)-D_{s,y} u(t_2,x)] \mid \mathcal{F}_s \right] \d x.
	\end{align*}
	We first estimate the $k$-norm of the random variables
	$\mathcal{A}_1(s\,,y)$,  $\mathcal{A}_2(s\,,y)$, and   $\mathcal{B}(s\,,y)$
	in order to apply Lemma \ref{L:h1}.
	The H\"older continuity of $g'$,  the estimate  \eqref{E:D<p2}, and the H\"older continuity of
	$u(t\,,x)$  (see \eqref{E:uHolder})  together imply
	\begin{align}  \notag
		\| \mathcal{A}_1(s\,,y) \|_k & \le   L_\delta (g')  \sup_{x\in \R^d}
			\| u(t_2,x)- u(t_1,x) \|_{2 k\delta} ^{\delta}  N^{-d}  \int_{[0,N]^d}
			\|  D_{s,y} u(t_2,x)  \| _{2k}\,\d x \\
	& \le   \label{E:A1}
			C_{T, 2k, \ep, \sigma} L_\delta(g') C^\delta_{T, 2k\delta,\gamma}
			|t_2-t_1 |^{\gamma \delta}  N^{-d}  \int_{[0,N]^d}  \bm{p}_{t_2-s} (x-y)\,\d x,
	\end{align}
	where $  L_\delta (g')  = \sup_{-\infty <a<b<\infty} | g'(b)- g'(a)|/|b-a|^\delta$.

	The estimation of the  $k$-norm of   $\mathcal{A}_2(s\,,y)$ is more involved.
	We first write for every $s\in [0\,,t_1]$ and $y\in \R^d$,
	\begin{align*}
		D_{s,y} u(t_1,x)-D_{s,y} u(t_2,x) &= \sigma( u(s\,,y))
			[\bm{p}_{t_1-s} (x-y) - \bm{p}_{t_2-s} (x-y)] \\
		& \quad + \int_s^{t_1}   \int_{\R^d} [ \bm{p}_{t_1-r} (x-z )- \bm{p}_{t_2-r} (x-z )]
			\sigma'(u(r\,,z)) D_{s,y} u(r\,,z)\, \eta (\d  r\, \d z)  \\
		& \quad  + \int_{t_1}^{t_2}   \int_{\R^d} \bm{p}_{t_2-r} (x-z ) \sigma'(u(r\,,z))
			D_{s,y} u(r\,,z) \,\eta (\d  r\, \d z),
	\end{align*}
	where $\sigma'$  denotes the weak derivative of $\sigma$; see Remark \ref{R:Lip}.
	As a consequence,
	\[
		\Norm{ \mathcal{A}_2(s\,,y) }_k  \le   N^{-d} \int_{[0,N]^d} \| g'(u(t_2,x))\|_{2k} [ \Phi_1(x)+ \Phi_2(x)+ \Phi_3(x)] \d x,
	\]
	where
	\begin{align*}
		\Phi_1 & := \Norm{ \sigma( u(s\,,y))} _{2k}  |\bm{p}_{t_1-s} (x-y) - \bm{p}_{t_2-s} (x-y)|, \\
		\Phi_2 & := \left\|  \int_{(s,t_1)\times\R^d} [ \bm{p}_{t_1-r} (x-z )- \bm{p}_{t_2-r} (x-z )]
			\sigma'(u(r\,,z)) D_{s,y} u(r\,,z)\, \eta (\d  r\, \d z) \right\|_{2k}, \\
		\Phi_3 & := \left\|  \int_{(t_1,t_2)\times\R^d}   \bm{p}_{t_2-r} (x-z )
			\sigma'(u(r\,,z)) D_{s,y} u(r\,,z) \,\eta (\d  r\, \d z) \right\|_{2k} .
	\end{align*}
	 First we estimate $\Phi_1 $.
	Let $C_{T,2k,\sigma}=\sup_{t\in [0,T]}
	\Norm{\sigma( u(s\,,0))} _{2k} $ and use the following estimate  from Chen and Huang \cite[Lemma 4.1]{CH19}:
	\begin{equation}   \label{E:time}
 		|\bm{p}_{t_1-s} (x-y) - \bm{p}_{t_2-s} (x-y)|
		\le C \left(\frac{t_2-t_1}{t_1-s}\right)^{\rho/2}
		\bm{p}_{4(t_2-s)} (x-y),
	\end{equation}
	 valid with $\rho=2\gamma \delta <\alpha$, where $\alpha\in (0\,,1]$ is the  exponent
	that appears in the reinforced Dalang's condition \eqref{E:Dalang2}. Thus, we obtain
	\begin{equation} \label{E:Phi1}
		\Phi_1 \le  C_{T,2k,\sigma}   C %
		\left(\frac{t_2-t_1}{t_1-s}\right)^{\gamma\delta}
		\bm{p}_{4(t_2-s)} (x-y).
	\end{equation}
	For $\Phi_2$,  we apply the BDG inequality,  \eqref{E:time},
	and \eqref{E:D<p2}, and write
	\begin{align*}
		\Phi^2_2 & \le 8k  [{\rm Lip}(\sigma) C_{T,2k, \ep, \sigma}]^2
			\int_s^{t_1}   \d r  \int_{\R^d} \d z\int_{\R^d} f(\d z')\
			| \bm{p}_{t_1-r} (x-z )- \bm{p}_{t_2-r} (x-z ) |\\
		& \quad \times |  \bm{p}_{t_1-r} (x-z+z' )- \bm{p}_{t_2-r} (x-z+ z' ) |
			\bm{p}_{r-s} (y-z) \bm{p}_{r-s} (y-z+z') \\
		& \le 8k  [{\rm Lip}(\sigma) C C_{T,2k, \ep, \sigma}]^2 (t_2-t_1)^{\rho}
			\int_s^{t_1}   \frac{\d r }{(t_1-r)^\rho} \\
		& \quad \times  \int_{\R^d}\d z \int_{\R^d}f(\d z')\
			\bm{p}_{4(t_2-r)} (x-z)  \bm{p}_{4(t_2-r)} (x-z+z')
			\bm{p}_{r-s} (y-z) \bm{p}_{r-s} (y-z+z')\\
		& \le 2^{2d+2}k    [{\rm Lip}(\sigma) C C_{T,2k, \ep, \sigma}]^2(t_2-t_1)^{\rho}
			\int_s^{t_1}   \frac{\d r }{(t_1-r)^\rho} \\
		& \quad \times  \int_{\R^d}\d z \int_{\R^d}f(\d z')\
			\bm{p}_{4(t_2-r)} (x-z)  \bm{p}_{4(t_2-r)} (x-z+z')   \bm{p}_{4(r-s)} (y-z) \bm{p}_{4(r-s)} (y-z+z').
	\end{align*}
	The elementary identity,
	\begin{equation} \label{E:equa1}
		\bm{p} _\sigma(x)    \bm{p} _\sigma(y)  =2^d \bm{p}_{2\sigma} (x-y) \bm{p}_{2\sigma} (x+y),
	\end{equation}
	valid for every $\sigma>0$ and $x,y\in\R^d$, yields the
	following with $C^{(1)} := 2^{2d+2}k  [{\rm Lip}(\sigma)CC_{T,2k, \ep, \sigma}]^2  $:
	\begin{align}     \notag
		\Phi^2_2  & \le    C^{(1)}   (t_2-t_1)^{\rho}
			\int_s^{t_1} \frac{\d r}{(t_1-r)^\rho}  \\ \notag
		& \quad \times \int_{\R^d} \d z \int_{\R^d} f(\d z')\
			\bm{p}_{8(t_2-r)} (2x-2z+z')  \bm{p}_{8(t_2-r)} (z')
			\bm{p}_{8(r-s)} (2y-2z+z') \bm{p}_{8(r-s)} (z') \\ \notag
		&=    2^{-d}C^{(1)}  (t_2-t_1)^{\rho}     \bm{p}_{8(t_2 -s)} (2(x-y))
			\int_s^{t_1} \frac{\d r}{(t_1-r)^\rho}
			\int_{\R^{d}}  f(\d z')  \
			\bm{p}_{8(t_2-r)} (z')   \bm{p}_{8(r-s)} (z')\\
		&=    2^{-2d}C^{(1)}  (t_2-t_1)^{\rho}     \bm{p}_{4(t_2 -s)} ^2(x-y)
			\int_s^{t_1}   \frac{\d r}{(t_1-r)^\rho}      \label{E:Phi2a}
			\int_{\R^{d}}  f(\d z') \
			\bm{p}_{8(t_2-r)(r-s)/(t_2-s)} (z').
	\end{align}
	In the last equality we used  the formulas
	\begin{equation} \label{E:equa2}
		\bm{p} _\sigma(x)     \bm{p} _\tau (x)   = (2\pi)^{-d/2}
		(\sigma+\tau)^{-d/2} \bm{p}_{\sigma \tau/(\sigma+\tau)}(x)
		\qquad [\sigma, \tau >0, x\in \R^d],
	\end{equation}
	and
	\begin{equation} \label{E:equa3}
		\bm{p} _\sigma(2x)   = 2^{-d} (2\pi \sigma)^{d/2}
		\bm{p}^2 _{\sigma/2}(x)  \qquad  [\sigma>0, x\in \R^d].
	\end{equation}
	Plancherel's identity allows us to write
	\begin{equation} \label{E:EQ2}
		\int_{\R^d}  p_{8(t_2-r)(r-s)/(t_2-s)} (z) \,f(\d z)
		=\left(2\pi\right)^{-d} \int_{\R^d} \exp\left(-\frac{4(t_2-r)(r-s)}{t_2-s}\Norm{\xi}^2\right) \hat{f}(\xi)\d\xi.
	\end{equation}
	Moreover,
	\begin{align}   \notag
		\int_s^{t_1}
			\exp\left(-\frac{4(t_2-r)(r-s)}{t_2-s}\Norm{\xi}^2\right) \frac{\d r}{(t_1-r)^\rho}
			& \le \int_s^{t_1} \exp\left(-\frac{4(t_1-r)(r-s)}{t_1-s}\Norm{\xi}^2\right)
			 \frac{\d r } {(t_1-r)^\rho}\\ \notag
		&\hskip-.3in= (t_1-s)^{1-\rho}  \int_0^1
			\exp\left(- 4 x(1-x) (t_1-s)\Norm{\xi}^2\right)
			 \frac{\d x}{(1-x)^\rho} \\ \notag
		&\hskip-.3in\le   C_\rho (t_1-s)^{1-\rho}
			\left(1\wedge  (4(t_1-s) \| \xi \|^2)^{\rho-1} \right) \\  \label{E:EQ3}
		&\hskip-.3in\le C_\rho  4^{\rho-1}   ( (4T)^{1-\rho} \wedge  \| \xi \|^{2(\rho-1)} ),
	\end{align}
	where we made the change of variable $(r-s)/(t_1-s) =x$ in the equality and
	applied  Lemma \ref{L:4Holder} in the second inequality.
	We substitute  \eqref{E:EQ2}  and \eqref{E:EQ3} into   \eqref{E:Phi2a},
	taking into account that
	$\int_{\R^d}  ( (4T)^{1-\alpha} \wedge  \| \xi \|^{2(\rho-1)} ) \hat{f} (\xi)\,\d \xi <\infty$
	due to \eqref{E:Dalang2} and $\rho\le \alpha$,    in order to find that
	\begin{equation} \label{E:Phi2}
		\Phi_2 \le  L  (t_2-t_1)^{\rho /2}      \bm{p}_{4(t_2-s)} (x-y),
	\end{equation}
	where here and for the rest of the proof $L$ will denote a constant that   depends on
	$(T,k, \sigma, \gamma, \rho,f)$, and can vary from line to line.
	We use the same arguments as before and
	employ the formulas \eqref{E:equa1}, \eqref{E:equa2} and \eqref{E:equa3},
	in order to write
	\begin{align}
		\Phi^2_3  &    \notag
			\le 8k  [{\rm Lip}(\sigma)  C_{T,2k, \ep, \sigma}]^2  \\ \notag
		& \quad \times  \int_{t_1}^{t_2}   \d r
			 \int_{\R^d}\d z  \int_{\R^d}
			f(\d z')\ \bm{p}_{t_2-r} (x-z )  \bm{p}_{t_2-r} (x-z+ z' )
			\bm{p}_{r-s} (y-z) \bm{p}_{r-s} (y-z+z') \\ \notag
		& \le   2^{2d+3} k [{\rm Lip}(\sigma)  C_{T,2k, \ep, \sigma}]^2 \\ \notag
		& \quad \times   \int_{t_1}^{t_2} \d r
			 \int_{\R^d}\d z  \int_{\R^d}
			f(\d z')\
			\bm{p}_{2(t_2-r)} (2x-2z+z')  \bm{p}_{2(t_2-r)} (z')
			\bm{p}_{2(r-s)} (2y-2z+z') \bm{p}_{2(r-s)} (z')\\ \label{E:Holder7}
		& \le  C^{(2)} \bm{p}^2_{  t_2-s} (x-y)\int_{t_1}^{t_2} \d r
			 \int_{\R^d}\d z  \int_{\R^d}
			f(\d z')\
			\bm{p}_{2(t_2-r)(r-s)/(t_2-s)} (z'),
	\end{align}
	with $C^{(2)} := 8 k [{\rm Lip}(\sigma)  C_{T,2k, \ep, \sigma}]^2$.
	Again,  Plancherel's identity allows us to write
	\begin{equation} \label{E:Holder5}
		\int_{\R^d} p_{2(t_2-r)(r-s)/(t_2-s)} (z)\, f(\d z)
			=\left(2\pi\right)^{-d} \int_{\R^d}
			\exp\left(-\frac{(t_2-r)(r-s)}{t_2-s}\Norm{\xi}^2\right) \hat{f}(\xi)\,\d\xi,
	\end{equation}
	and we have
	\begin{align}  \notag
		&\int_{t_1}^{t_2} \exp\left(-\frac{(t_2-r)(r-s)}{t_2-s}\Norm{\xi}^2\right)
		 \d r
			\le \int_{t_1}^{t_2}
			\exp\left(-\frac{(t_2-r)(r-t_1)}{t_2-t_1}\Norm{\xi}^2\right)
			 \d r\\ \notag
		& \qquad = (t_2-t_1) \int_0^1  \exp \left(  -x(1-x) (t_2-t_1) \| \xi \|^2 \right)
			 \d x \\ \notag
		& \qquad    \le  C_0 (t_2-t_1)  (1\wedge ( (t_2-t_1) \| \xi \|^2)^{-1})
			\le  C_0 (t_2-t_1)  (1\wedge ( (t_2-t_1) \| \xi \|^2)^{\alpha-1}) \\ \label{E:Holder6}
		& \qquad    \le  C_0 (t_2-t_1)^\alpha  (T^{1-\alpha}\wedge  \| \xi \|^{ 2\alpha-1}),
	\end{align}
	where we used Lemma \ref{L:4Holder}
	with $\rho=0$ in the equality. From \eqref{E:Holder7}, \eqref{E:Holder5}, and \eqref{E:Holder6} we obtain
	\begin{equation} \label{E:Phi3}
		\Phi_3 \le L (t_2-t_1)^{\alpha/2}   \bm{p}_{  t_2-s} (x-y).
	\end{equation}
	From \eqref{E:Phi1}, \eqref{E:Phi2} and \eqref{E:Phi3} we deduce
	\begin{equation} \label{E:A2}
		\|  \mathcal{A}_2(s\,,y) \|_k \le  \Norm{g'(u(t_2\,,0))}_{2k}
		     L(t_2-t_1)^{\gamma\delta  }  (t_1-s)^{-\gamma\delta} N^{-d}
		\int_{[0,N]^d} \bm{p}_{  4(t_2-s)} (x-y)\,\d x.
	\end{equation}
	The H\"older continuity of $g'$ implies that   $ \sup_{r\in [0,T]} \Norm{g'(u(r\,,0))}_{2k}   <\infty$
	and  an application of Lemma \ref{L:h1}
	with $[a\,,b] = [0\,,t_1]$, $\tau=t_2$,  $M=4$,
	$\varphi(s)= (t_1-s)^{-\gamma\delta} \mathbf{1}_{[0,t_1)}(s)$
	yields the desired bound for
	$ \|\int_{[0,t_1] \times \R^d} \mathcal{A}(s\,,y)\, \eta (\d s \, \d y)\|_k$.

	Finally,
	\begin{equation} \label{E:A3}
		\| \mathcal{B}(s\,,y)\|_k  \le    \sup_{r\in[0,T]}\Norm{g'(u(r\,,0))}_{2k}   C_{T,2k, \sigma, \ep}
		N^{-d}   \int_{[0,N]^d} \bm{p}_{t_2-s} (x-y)\,\d x,
	\end{equation}
	and a further application of  Lemma \ref{L:h1} with $[a\,,b] = [t_1,t_2]$,
	$\tau=t_2$,  $M=1$, and  $\varphi(s)= 1$
	allows us to bound $ \|\int_{[t_1,t_2] \times \R^d} \mathcal{B}(s\,,y)\, \eta (\d s \, \d y)\|_k$
	by a constant times $N^{-d/2} (t_2-t_1) ^{1/2}$.
	The proof of Case (i) of Proposition \ref{P:Holder} is now complete.
\end{proof}

\begin{proof}[Proof of Case (ii) of Proposition \ref{P:Holder}]
	We follow the proof of Case (i) of Proposition \ref{P:Holder},
	making necessary changes to adapt the arguments to the present setting.
	The main difference comes from the bound for $\mathcal{A}_1$ under the new
	assumptions on $g$. In particular, notice that
	\begin{align*}
		\Norm{g'\left(u(t_1,x)\right)-g'\left(u(t_2,x)\right)}_{2k}
			&\le \left(\Norm{g''\left(u(t_1,x)\right)}_{4k}+
			\Norm{g''\left(u(t_2,x)\right)}_{4k}\right)
			\sup_{x \in \R^d}\Norm{u(t_1,x)-u(t_2,x)}_{4k}\\
		&\le C_{T,4k,\gamma}\left(\Norm{g''\left(u(t_1,0)\right)}_{4k}+
			\Norm{g''\left(u(t_2,0)\right)}_{4k}\right) | t_2-t_1|^{\gamma}\\
		&\le C_{T,4k,\gamma}\sup_{t\in[0,T]}\Norm{g''\left(u(t\,,0)\right)}_{4k} | t_2-t_1|^{\gamma}\\
		&=: C_1  | t_2-t_1|^\gamma,
	\end{align*}
	where the first inequality is due to the monotonicity of $g''$, the second  is due to \eqref{E:uHolder}
	and the stationarity of the solution, and the constant
	$C_{T,4k,\gamma}$ comes from \eqref{E:uHolder}.
	Note that the continuity of $t\in [0\,,T] \mapsto \Norm{g''(u(t\,,0))}_{4k}$
	ensures that the above constant $C_1$ is finite.
	Hence,
	\begin{gather} \label{E:A1'}
		\Norm{\mathcal{A}_1(s\,,y)}_k \le C_1|t_1-t_2|^{\gamma} N^{-d}
		\int_{[0,N]^d}\bm{p}_{t_2-s}(x-y)\,\d x.
	\end{gather}
	The  remainder
	of the proof is similar, with the remark that  $ \sup_{r\in [0,T]} \Norm{g'(u(r\,,0))}_{2k}   <\infty$ due to condition
	\eqref{E:MmCond_FCLT}.
\end{proof}

Equipped with Proposition  \ref{P:Holder}, we can now proceed with the proof of Theorem \ref{T:FCLT}.

\begin{proof}[Proof of Theorem \ref{T:FCLT}]
	We first prove Case (i).
	Choose and fix some $T>0$.
	By   Proposition \ref{P:Holder},
	a standard application of Kolmogorov's continuity theorem and
	the Arzel\`{a}-Ascoli theorem  ensures that
	$\{  N^{d/2}\,\mathcal{S}_{N,\bullet}(g) \}_{N\ge 1}$ is a tight net of processes
	on $C([0\,,T])$. Therefore, it remains to prove that the finite-dimensional
	distributions of the process $t\mapsto N^{d/2}\mathcal{S}_{N,t}(g)$
	converge to those of $\{\mathcal{G}_t\}_{t\in[0,T]}$.

	Let us choose and fix  some $T>0$ and $m\ge 1$ points $t_1,\ldots,t_m\in(0\,,T)$.
	 Consider the  random vector $F_N= (F_N^{(1)}, \dots, F_N^{(m)})$ defined by
	\[
		F_N^{(i)}  := N^{d/2}  \mathcal{S}_{N,t_i} (g),
		\qquad \text{for }i=1,\ldots,m,
	\]
	  let $G=(\mathcal{G}_{t_1} \,,\ldots,\mathcal{G}_{t_m})$ denote a centered Gaussian random vector
	with covariance matrix $(\mathbf{B}_{t_i,t_j}(g))_{1\le i,j\le m}$.
	Recall from \eqref{E:HatV} the random fields $v_{N,t_1}(g),\ldots,v_{N,t_m}(g)$ which are such that
	$F_N^{(i)} =\delta(N ^{d/2} v_{N,t_i}(g) )$
	for all $i=1,\ldots,m$.  Set $V_N^{(i)} =N ^{d/2} v_{N,t_i}(g)$ for $i=1,\dots,m$ and
	let $V_N=(V_N^{(1)}, \dots, V_N^{(m)})$.
	Since $\E[F_N^{(i)}]=0$ and $F_N^{(i)}\in\mathbb{D}^{1,4}$ (see \eqref{E:SNallN}), we can write,
	by a  generalization of  a result of Nourdin and Peccatti \cite[Theorem 6.1.2]{NP},
	\begin{equation} \label{E:equa7}
		| \E( h(F_N)) -\E (h(G)) | \le \tfrac 12 \|h ''\|_\infty
		\sqrt{   \sum_{i,j=1}^m   \E \left( \left|
		 \mathbf{B}_{i,j} (g)- \langle DF_N^{(i)} \,, V_N^{(j)} \rangle_{\HH} \right|^2
		\right)}
	\end{equation}
	for every $h\in C^2(\R^m)$ that has bounded second partial derivatives, and with the notation
	 \[
		\|h'' \| _\infty= \adjustlimits\max_{1\le i,j \le m}
		\sup_{x\in\R^m}  \left| \frac { \partial ^2h (x) } {\partial x_i \partial x_j} \right|.
	\]
	Thus,  in view of \eqref{E:equa7}  in order to show the  convergence in law of $F_N$ to $G$, it suffices to show that for any $i,j =1,\dots, m$, we have
	\begin{equation} \label{E:h6}
		\lim_{N\rightarrow \infty} \E \left( \left|
		 \mathbf{B}_{i,j} (g)- \langle DF_N^{(i)} \,, V_N^{(j)} \rangle_{\HH}\right|^2
		\right)=0.
	\end{equation}
	Notice that, by the duality relationship  \eqref{D:delta} and the convergence \eqref{E:Var_Lim} and   we have, as $N \rightarrow \infty$,
	\begin{align} \notag
		\E \left( \langle DF_N^{(i)} \,,V_N^{(j)}  \rangle_{\HH} \right)
		& = N^{d}\E\left(\langle  D \mathcal{S}_{N,t_i} (g) , v_{N,t_j}(g) \rangle_{\HH}  \right) \\\notag
		& = N^{d}\E\left(\mathcal{S}_{N,t_i} (g) , \delta (v_{N,t_j}(g) ) \right)\\
		& = N^{d}\E\left(\mathcal{S}_{N,t_i} (g) ,  \mathcal{S}_{N,t_j} (g)  \right) \rightarrow  \mathbf{B}_{t_i,t_j}(g). \label{E:h7}
	\end{align}
	Therefore, the convergence \eqref{E:h6} follows immediately from  \eqref{E:h7} and  \eqref{E:L1Conv}.
		It follows from this fact that the finite-dimensional distributions of
	$t\mapsto N^{d/2} \,\mathcal{S}_{N,t}(g)$ converge to those of
	$\mathcal{G}$ as $N\to\infty$. The proof of Case (i) of Theorem \ref{T:FCLT} is now complete.
	\bigskip

	The proof of Case (ii) follows the same lines as that of Case (i) with the following changes:
	Tightness  is a consequence of part (ii) of Proposition \ref{P:Holder}.
	In order to apply Theorem 6.1.2 of Nourdin and Peccati \cite{NP},
 	we need to verify that $\mathcal{S}_{n,t}(g)\in \mathbb{D}^{1,4}$ for all $t>0$.
	This follows from  the fact that  $g(u(t\,,x)) \in  \mathbb{D}^{1,4}$ for all   $t>0$
	and for all  $x\in \R^d$ due to Lemma \ref{L:Sobolev} and assumption \eqref{E:MmCond_FCLT}
	and the fact that the integral
	$\int_{[0,N]^d} g(u(t\,,x) )\,\d x$ can be approximated by Riemann sums in the norm
	of $\mathbb{D}^{1,4}$.
	Finally, the convergence in \eqref{E:h7} is due to \eqref{E2:Var_Lim}.
	This completes the proof of Theorem \ref{T:FCLT}.
\end{proof}

\section{Total Variation Distance for linear \texorpdfstring{$g$}{g}: Proof of Theorem \ref{T:TV_Bnd}} \label{S:Bound}

Throughout this section, we choose and fix a number $t>0$.
The strategy of the proof of Theorem \ref{T:TV_Bnd} is as follows:
We proceed as we did with Theorem \ref{T:TV}, and begin by appealing to
\eqref{E:goal}, \eqref{E:HatV}, and \eqref{E:K}, but use the linearity of $g$
in order to make  precise estimates of the centralized moments of $K_N$
(rather than appeal to  ergodic-theoretic arguments).
The details are lengthy and technical, and will follow in due time. For now, we suffice it to say that
one can indeed anticipate simplifications in the present setting
in which $g$ is linear. For example,
because $g(v)=v$ for all $v\in\R^d$, \eqref{E:ec2} simplifies to the a.e.\
identity (in $(s\,,z)\in(0\,,t)\times\R^d$):
\begin{equation}\label{E:g(z)=z}
	D_{s,z}\mathcal{S}_{N,t}(g) = N^{-d}\int_{[0,N]^d} D_{s,z}u(t\,,x)\,\d x.
\end{equation}
This is one of the quantities that appears in the $\mathcal{H}$-inner product
in the definition \eqref{E:KN} of $K_N$. We will be able to make use of the above
because of the well-known fact that
the integrand $D_{s,z}u(t\,,x)$ solves the following stochastic
integral equation for all $t>0$ and $x\in\R^d$ and a.e.\
$(s\,,y)\in(0\,,t)\times\R^d$: a.s.,
\begin{equation}\label{E:Du}
	D_{s,y} u(t\,,x) = \bm{p}_{t-s}(x-y) \sigma(u(s\,,y)) +
	\int_{(s,t)\times\R^d} \bm{p}_{t-s}(x-z) \sigma'(u(r\,,z))
	D_{s,y} u(r\,,z)\,\eta(\d r\,\d z).
\end{equation}
The above integral eqution also allows us to better understand the second quantity
$v_{N,t}(g)$ in the $\mathcal{H}$-inner product of the definition \eqref{E:KN} of $K_N$.
Indeed,
because of the martingale property of Walsh integral,
the above eadily yields the following a.s.-identity that is valid with the same null-set
restrictions as does the preceding identity:
\[
	\E\left[  D_{s,y}u(t\,,x)\mid\mathcal{F}_s \right]
	= \bm{p}_{t-s}(x-y) \sigma(u(s\,,y)).
\]
Integrate this over $x\in[0\,,N]^d$, apply Fubini's theorem, and divide by $N^d$
to find that for all $t>0$ and a.e.\ $(s\,,y)\in(0\,,t)\times\R^d$,
\begin{equation}\label{E:Pi}\begin{split}
	&\E\left[ D_{s,y}\mathcal{S}_{N,t}(g) \mid \mathcal{F}_s\right]
		=\Pi^{(N)}_{s,y}\sigma(u(s\,,y))
		\quad\text{a.s.,\ where}\\
	&\Pi^{(N)}_{s,y}=\Pi^{(N)}_{s,y}(t) := N^{-d}\int_{[0,N]^d}\bm{p}_{t-s}(x-y)\,\d x.
\end{split}\end{equation}
In light of \eqref{E:HatV}, we find the following formula for the
random field $v_{N,t}(g)$, valid since $g(v)=v$ for all
$v\in\R$: For all $t>0$ and a.e.\ $(s\,,z)\in(0\,,t)\times\R^d$,
\[
	v_{N,t}(g) (s\,,z) = \Pi^{(N)}_{s,z}\sigma(u(s\,,z))\bm{1}_{[0,t]}(s).
\]
Finally, let us integrate both sides of \eqref{E:Du} over $x\in[0\,,N]^d$,
and then divide by $N^d$, in order to deduce
from standard Fubini-type arguments and \eqref{E:g(z)=z} that a.s.\
for a.e.\ $(s\,,y)\in(0\,,t)\times\R^d$,
\begin{equation}\label{E:DvT}\begin{split}
	&D_{s,y}\mathcal{S}_{N,t}(g) = v_{N,t}(g)(s\,,y) + \mathcal{T}(s\,,y),
		\quad\text{where}\\
	&\mathcal{T}(s\,,y) = \mathcal{T}_{g,t,N}(s\,,y) := \int_{(s,t)\times\R^d}
		\Pi^{(N)}_{s,z} \sigma'(u(r\,,z))
		D_{s,y} u(r\,,z)\,\eta(\d r\,\d z),
\end{split}\end{equation}
and $\sigma'$ is any [measurable]
version of the weak derivative of $\sigma$ (see Remark \ref{R:Lip}).
And $D_{s,y}\mathcal{S}_{N,t}=0$ when $s\ge t$.
The preceding yields, among other things, that
\begin{equation}\label{E:K:g(z)=z}
	\frac{K_N}{N^d} = \left\| v_{N,t}(g)\right\|_{\mathcal{H}}^2 +
	\left\< \mathcal{T}\,,v_{N,t}(g)\right\>_{\mathcal{H}};
\end{equation}
see \eqref{E:KN}.
We estimate the preceding two quantities separately and in turn by way of two lemmas.

\begin{lemma}\label{L:||v||:g(z)=z}
	There exists a real number $L_1=L_1(f(\R^d)\,,\lip(\sigma))$ such that for every $\varepsilon\in(0\,,1)$,
	$N,t>0$,
	\[
		\Var\left( \left\|v_{N,t}(g)\right\|_{\mathcal{H}}^2\right) \le
		 L_1t^3C_{t,4,\varepsilon,\sigma}^2 A_t^2 N^{-3d},
	\]
	where $C_{t,4,\varepsilon,\sigma}$ was defined in Lemma \ref{L:Du}
	and $A_t := \sup_{s\in(0,t)}\|\sigma(u(s\,,0))\|_4$.
\end{lemma}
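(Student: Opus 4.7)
The plan is to apply the Poincar\'e inequality \eqref{E:Poincare} to the random variable $X_N := \|v_{N,t}(g)\|_{\mathcal{H}}^2$. From \eqref{E:Pi} and the definition of the $\mathcal{H}_0$-inner product,
\begin{equation*}
X_N = \int_0^t\d s\int_{\R^{2d}}\d y\,f(\d z)\,\Pi^{(N)}_{s,y}\Pi^{(N)}_{s,y-z}\,\sigma(u(s,y))\sigma(u(s,y-z)),
\end{equation*}
so that $\Var(X_N)\le \E(\|DX_N\|_{\mathcal{H}}^2)$ reduces everything to a moment bound on $DX_N$. The chain rule of the Malliavin calculus yields two terms when one differentiates the product $\sigma(u(s,y))\sigma(u(s,y-z))$; the change of variables $y\mapsto y+z$ and the symmetry of $f$ show that both contribute equally, so that
\begin{equation*}
D_{r,w}X_N = 2\int_r^t\d s\int_{\R^{2d}}\d y\,f(\d z)\,\Pi^{(N)}_{s,y}\Pi^{(N)}_{s,y-z}\,\sigma'(u(s,y))D_{r,w}u(s,y)\,\sigma(u(s,y-z)),
\end{equation*}
with $\sigma'$ the weak derivative of $\sigma$ (Remark \ref{R:Lip}). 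Minkowski's integral inequality, Cauchy--Schwarz, $|\sigma'|\le\lip(\sigma)$, the bound \eqref{E:D<p2} of Lemma \ref{L:Du} with $k=4$, and the stationarity estimate $\|\sigma(u(s,y-z))\|_4\le A_t$ then combine to give
\begin{equation*}
\|D_{r,w}X_N\|_2 \le 2\lip(\sigma)\,C_{t,4,\varepsilon,\sigma}A_t\int_r^t\d s\int_{\R^{2d}}\d y\,f(\d z)\,\Pi^{(N)}_{s,y}\Pi^{(N)}_{s,y-z}\,\bm{p}_{s-r}(y-w).
\end{equation*}

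Inserting this into $\E(\|DX_N\|_{\mathcal{H}}^2)\le \int_0^t\d r\int_{\R^d}\d w\int_{\R^d}f(\d w')\,\|D_{r,w}X_N\|_2\|D_{r,w-w'}X_N\|_2$ produces a sixfold deterministic integral whose $N$-dependence I would extract using two elementary consequences of the definition \eqref{E:Pi}: the pointwise bound $\Pi^{(N)}_{s,y}\le N^{-d}$ and the mass identity $\int_{\R^d}\Pi^{(N)}_{s,y}\,\d y=1$. Concretely I would proceed in five steps: (i) bound the two ``$z$-side'' factors $\Pi^{(N)}_{s_j,y_j-z_j}$ pointwise by $N^{-d}$ and integrate $z_1,z_2$ against $f$, producing $f(\R^d)^2$; (ii) collapse the two heat kernels by the semigroup identity
\begin{equation*}
\int_{\R^d}\bm{p}_{s_1-r}(y_1-w)\bm{p}_{s_2-r}(y_2-w+w')\,\d w=\bm{p}_{s_1+s_2-2r}(y_2-y_1+w');
\end{equation*}
(iii) estimate $\int_{\R^d}\Pi^{(N)}_{s_1,y_1}\bm{p}_{s_1+s_2-2r}(y_2-y_1+w')\,\d y_1\le N^{-d}$ by Fubini and $\int_{[0,N]^d}\bm{p}_{\cdot}(x-y_2-w')\,\d x\le 1$; (iv) integrate $y_2$ via the mass identity; (v) integrate $w'$ against $f$ for a last factor $f(\R^d)$. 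The remaining deterministic piece is $\int_0^t(t-r)^2\,\d r=t^3/3$, which delivers the stated bound with $L_1=\tfrac43\lip(\sigma)^2 f(\R^d)^3$.

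The main obstacle is purely bookkeeping: of the four factors of $\Pi^{(N)}$ that appear in the product of the two Malliavin-derivative bounds (two per copy of $D_{r,w}X_N$), exactly three must be consumed via the pointwise bound $\Pi^{(N)}\le N^{-d}$ and the fourth via its total mass $\int\Pi^{(N)}=1$ in order to produce the sharp $N^{-3d}$ scaling. The nontrivial choice is to reserve the mass identity for the factor $\Pi^{(N)}_{s_2,y_2}$ in step (iv): using it earlier would break the $w$-integration in step (ii), while using the pointwise bound there would waste a factor of $N^d$.
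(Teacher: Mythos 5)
Your proof is correct and follows essentially the same route as the paper: the Poincar\'e inequality, the chain rule for the product $\sigma(u(s,y))\sigma(u(s,y-z))$ with the factor $2$ from symmetry, the first-order Malliavin bound of Lemma \ref{L:Du} with $k=4$, stationarity, the heat-kernel semigroup property, and the $\Pi^{(N)}\le N^{-d}$ versus unit-mass bookkeeping. The only (immaterial) difference is one of ordering: the paper first applies Minkowski's inequality in the time variable and then the Poincar\'e inequality to each fixed-time slice $F_s$, whereas you apply the Poincar\'e inequality directly to $\left\|v_{N,t}(g)\right\|_{\mathcal{H}}^2$ and absorb the time integral into the bound for $D_{r,w}X_N$; both orderings yield the same $t^3N^{-3d}$ estimate.
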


\begin{proof}
	Choose and fix $N,t>0$ and $\varepsilon\in(0\,,1)$ throughout.
	The definition of the Hilbert space $\mathcal{H}$ implies that
	\begin{align*}
		\left\| v_{N,t}(g) \right\|_{\mathcal{H}}^2 &= \int_0^t\d s
			\int_{\R^d}\d y\ v_{N,t}(g)(s\,,y) \left( v_{N,t}(g)(s\,, \bullet )*f\right)(y)\\
		&= \int_0^t\d s \int_{\R^d}\d y\int_{\R^d}f(\d z)\
			\Pi^{(N)}_{s,y}\Pi^{(N)}_{s,y-z}
			\sigma(u(s\,,y))\sigma(u(s\,,y-z)).
	\end{align*}
	Therefore, we can apply Minkowski's inequality for integrals\footnote{That is,
		$\| \int_0^t [X_s-\E(X_s)]\,\d s\|_2 \le
			\int_0^t\|X_s - \E(X_s)\|_2\,\d s,$
		whenever this makes sense, for any stochastic process $X=\{X_s\}_{0\le s\le t}$.
		}
	in order to see that
	\begin{align}\nonumber
		\Var\left(\left\| v_{N,t}(g)\right\|_{\mathcal{H}}^2\right)
			&\le \left[\int_0^t\left\{
			\Var\left(\int_{\R^d}\d y\int_{\R^d}f(\d z)\
			\Pi^{(N)}_{s,y}\Pi^{(N)}_{s,y-z}\sigma(u(s\,,y))\sigma(u(s\,,y-z))\right)
			\right\}^{1/2}\,\d s\right]^2\\
		&=: \left[\int_0^t\left\{
			\Var(F_s) \right\}^{1/2}\,\d s\right]^2,
	\label{E:Var(|v|)}\end{align}
	the definition of the process $\{F_s\}_{s\in(0,t)}$ being obvious.
	By the Poincar\'e inequality \eqref{E:Poincare},
	$\Var(F_s)\le\E(\|DF_s\|_{\mathcal{H}}^2)$. Therefore, we need to compute
	$DF_s$. But that is, by the chain rule \cite[Proposition 1.2.3]{Nualart}, given by
	\begin{align*}
		D_{r,w}F_s &= \int_{\R^d}\d y\int_{\R^d}f(\d z)\
			\Pi^{(N)}_{s,y}\Pi^{(N)}_{s,y-z} D_{r,w}\left[\sigma(u(s\,,y))\sigma(u(s\,,y-z))\right]\\
		& = 2\int_{\R^d}\d y\int_{\R^d}f(\d z)\
			\Pi^{(N)}_{s,y}\Pi^{(N)}_{s,y-z} \sigma'(u(s\,,y)) D_{r,w}u(s\,,y)\sigma(u(s\,,y-z)),
	\end{align*}
	where the prefactor $2$ comes from symmetry. In this way we find that
	\begin{align*}
		\E\left( \|DF_s\|_{\mathcal{H}}^2\right) &= \E\left[\int_0^s\d r\int_{\R^d}\d w\int_{\R^d}f(\d v)\
			D_{r,w}(F)D_{r,w-v}(F)\right]\\
		&=4\int_0^s\d r\int_{\R^d}\d w\int_{\R^d}f(\d v)\int_{\R^d}\d y\int_{\R^d}f(\d z)
			\int_{\R^d}\d y'\int_{\R^d}f(\d z')\
			\Pi^{(N)}_{s,y}\Pi^{(N)}_{s,y-z}\Pi^{(N)}_{s,y'}\Pi^{(N)}_{s,y'-z'}\\
		&\quad\times\E\left[\sigma'(u(s\,,y)) D_{r,w}u(s\,,y)\sigma(u(s\,,y-z))
			\sigma'(u(s\,,y')) D_{r,w-v}u(s\,,y')\sigma(u(s\,,y'-z'))\right].
	\end{align*}
	H\"older's inequality ensures that
	the final expectation in the above display is bounded above in modulus by
	\begin{align*}
		&[\lip(\sigma)]^2 \left\| D_{r,w}u(s\,,y)\right\|_4 \left\| \sigma(u(s\,,y-z))\right\|_4
			\left\| D_{r,w-v}u(s\,,y')\right\|_4 \left\| \sigma(u(s\,,y'-z'))\right\|_4\\
		&=[\lip(\sigma)]^2\left\| \sigma(u(s\,,0))\right\|_4^2
			\left\| D_{r,w}u(s\,,y)\right\|_4 \left\| D_{r,w-v}u(s\,,y')\right\|_4
			&\text{[stationarity]}\\
		&\le \left[C_{t,4,\varepsilon,\sigma}A_t\lip(\sigma)\right]^2
			\bm{p}_{s-r}(y-w)\bm{p}_{s-r}(y'-w+v),
	\end{align*}
	uniformly for all $s\in(0\,,t)$, where $\varepsilon\in(0\,,1)$ is arbitrary;
	see Lemma \ref{L:Du} for the last line. These computations and the Poincar\'e inequality
	together yield the following:
	\begin{align*}
		\Var(F_s) &\le 4\left[C_{t,4,\varepsilon,\sigma}A_t\lip(\sigma)\right]^2
			\int_0^s\d r\int_{\R^d}\d w\int_{\R^d}f(\d v)\int_{\R^d}\d y\int_{\R^d}f(\d z)
			\int_{\R^d}\d y'\int_{\R^d}f(\d z')\\
		&\quad\times
			\Pi^{(N)}_{s,y}\Pi^{(N)}_{s,y-z}\Pi^{(N)}_{s,y'}\Pi^{(N)}_{s,y'-z'}
			\bm{p}_{s-r}(y-w)\bm{p}_{s-r}(y'-w+v)\\
		&=4\left[C_{t,4,\varepsilon,\sigma}A_t\lip(\sigma)\right]^2
			\int_0^s\d r\int_{\R^d}f(\d v)\int_{\R^d}\d y\int_{\R^d}f(\d z)
			\int_{\R^d}\d y'\int_{\R^d}f(\d z')\\
		&\quad\times
			\Pi^{(N)}_{s,y}\Pi^{(N)}_{s,y-z}\Pi^{(N)}_{s,y'}\Pi^{(N)}_{s,y'-z'}
			\bm{p}_{2(s-r)}(y'-y+v),
	\end{align*}
	owing to the symmetry and semigroup properties of the heat kernel.
	Since $\Pi^{(N)}\le N^{-d}$ [see \eqref{E:Pi}]
	we may replace three of the four terms that involve the $\Pi$'s
	with $N^{-d}$ in order to find that
	\begin{align*}
	\Var(F_s) &\le \frac{4\left[C_{t,4,\varepsilon,\sigma}A_t\lip(\sigma)\right]^2}{N^{4d}} \int_0^s\d r\int_{\R^d}f(\d v)\\
		  &\quad\times \int_{\R^d}\d y\int_{\R^d}f(\d z) \int_{\R^d}\d y'\int_{\R^d}f(\d z') \int_{[0,N]^d}\d a\ \bm{p}_s(a-y)\bm{p}_{2(s-r)}(y'-y+v)\\
		  &=\frac{4\left[C_{t,4,\varepsilon,\sigma}A_t\lip(\sigma)\right]^2f
		  	\left(\R^d\right)^2}{N^{4d}}\int_0^s\d r\int_{\R^d}f(\d v) \int_{\R^d}\d y' \int_{[0,N]^d}\d a\ \bm{p}_{3s-2r}(a+y'+v)\\
		  &=\frac{4\left[C_{t,4,\varepsilon,\sigma}A_t\lip(\sigma)\right]^2f\left(\R^d\right)^3 s}{N^{4d}}.
	\end{align*}
	Plug this bound into \eqref{E:Var(|v|)} to deduce the result.
\end{proof}

\begin{lemma}\label{L:Var(T,v):g(z)=z}
	There exists a real number $L_2=L_2 (f(\R^d)\,,\lip(\sigma))$
	such that for every $\varepsilon\in(0\,,1)$, $N,t>0$,
	\[
		\Var\left( \left\< \mathcal{T}\,,v_{N,t}(g)\right\>_{\mathcal{H}}\right)
		\le L_2t^3C_{t,4,\varepsilon,\sigma}^2 A_t^2 N^{-3d},
	\]
	where $C_{t,4,\varepsilon,\sigma}$ and $A_t$
	were respectively defined in Lemmas \ref{L:Du} and \ref{L:||v||:g(z)=z}.
\end{lemma}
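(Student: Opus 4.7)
The strategy parallels the proof of Lemma \ref{L:||v||:g(z)=z}. By the definition of $\langle\cdot,\cdot\rangle_{\mathcal{H}}$,
\[
\langle\mathcal{T},v_{N,t}(g)\rangle_{\mathcal{H}} = \int_0^t G_s\,\d s, \quad G_s := \int_{\R^d}\d y\int_{\R^d}f(\d z)\,\mathcal{T}(s,y)\Pi^{(N)}_{s,y-z}\sigma(u(s,y-z)).
\]
The key structural observation is that $\mathcal{T}(s,y)$ is a Walsh stochastic integral over $(s,t)\times\R^d$ whose integrand is adapted to $\{\mathcal{F}_r\}_{r\ge s}$; hence $\E[\mathcal{T}(s,y)\mid\mathcal{F}_s]=0$, and since $\Pi^{(N)}_{s,y-z}\sigma(u(s,y-z))$ is $\mathcal{F}_s$-measurable, $\E[G_s\mid\mathcal{F}_s]=0$. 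Consequently $\Var(G_s)=\E[G_s^2]$, and Minkowski's inequality for integrals yields
\[
\sqrt{\Var(\langle\mathcal{T},v_{N,t}(g)\rangle_{\mathcal{H}})}\le\int_0^t\sqrt{\E[G_s^2]}\,\d s.
\]

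To bound $\E[G_s^2]$ for fixed $s\in(0,t)$, I would expand the square as a fourfold spatial integral in $(y,z,y',z')$ and evaluate the inner second moment by applying the Walsh isometry conditionally on $\mathcal{F}_s$ to compute
\[
\E\!\left[\mathcal{T}(s,y)\mathcal{T}(s,y')\mid\mathcal{F}_s\right] = \E\!\left[\int_s^t\!\d r\int_{\R^d}\!\d\zeta\int_{\R^d}\!f(\d w)\,\Pi^{(N)}_{s,\zeta}\Pi^{(N)}_{s,\zeta-w}\sigma'(u(r,\zeta))\sigma'(u(r,\zeta-w))D_{s,y}u(r,\zeta)D_{s,y'}u(r,\zeta-w)\,\Big|\,\mathcal{F}_s\right].
\]
Then I would apply H\"older's inequality, the uniform bound $|\sigma'|\le\lip(\sigma)$ (see Remark \ref{R:Lip}), the stationary moment bound $\|\sigma(u(s,\bullet))\|_4\le A_t$, and Lemma \ref{L:Du} with $k=4$, replacing $\|D_{s,y}u(r,\zeta)\|_4$ by $C_{t,4,\varepsilon,\sigma}\bm{p}_{r-s}(y-\zeta)$. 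The outcome is a deterministic upper bound on $\E[G_s^2]$ involving four $\Pi^{(N)}$ factors, two heat kernels $\bm{p}_{r-s}$, and three integrations against $f$.

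Following the bookkeeping from Lemma \ref{L:||v||:g(z)=z}, I would bound three of the four $\Pi^{(N)}$ factors trivially by $N^{-d}$ (producing the $N^{-3d}$ decay), while the remaining $\Pi^{(N)}$, paired with one heat kernel and integrated against $x\in[0,N]^d$, yields an $O(1)$ quantity. The $f$-integrations collapse via Fubini and the semigroup property of $\bm{p}$ to $f(\R^d)^2$ times a constant, and $\int_s^t\d r$ produces a factor of order $(t-s)\le t$. This gives $\E[G_s^2]\le L_0\,[C_{t,4,\varepsilon,\sigma}A_t\lip(\sigma)]^2 f(\R^d)^2\,t\,N^{-3d}$; substituting into the Minkowski bound and squaring yields the announced estimate with $L_2$ depending only on $f(\R^d)$ and $\lip(\sigma)$.

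The main obstacle is the conditional application of the Walsh isometry: because $D_{s,y}u(r,\zeta)$ inside $\mathcal{T}$ is not $\mathcal{F}_s$-measurable, an extra outer expectation must be carried through, which is controlled by Lemma \ref{L:Du} rather than by a pointwise bound. Beyond this, the combinatorial task of distributing the $N^{-d}$ decay across three $\Pi^{(N)}$ factors while harvesting $O(1)$ contributions from the remaining convolutions is entirely analogous to its counterpart in Lemma \ref{L:||v||:g(z)=z}, but with one additional layer of stochastic integration.
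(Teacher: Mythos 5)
Your argument is correct and reaches the stated bound, but it is packaged differently from the paper's proof. The paper applies a stochastic Fubini theorem to pull the $\d s\,\d y\,f(\d w)$-integrations inside the Walsh integral hidden in $\mathcal{T}$, thereby writing $\langle\mathcal{T},v_{N,t}(g)\rangle_{\mathcal{H}}=\int_{(0,t)\times\R^d}\mathcal{A}(r,z)\,\eta(\d r\,\d z)$ for an explicit adapted field $\mathcal{A}$; the variance is then computed exactly by the unconditional Walsh isometry in the outer variable $r$, and everything reduces to bounding $\E\left|\mathcal{A}(r,a)\mathcal{A}(r,a-b)\right|$. You instead keep the time variable $s$ outside, use Minkowski's inequality (as in Lemma \ref{L:||v||:g(z)=z}), and evaluate $\E[\mathcal{T}(s,y)\mathcal{T}(s,y')\mid\mathcal{F}_s]$ by the conditional It\^o--Walsh isometry, exploiting that the factors $\Pi^{(N)}_{s,\cdot}\sigma(u(s,\cdot))$ are $\mathcal{F}_s$-measurable. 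From that point on the two proofs run on identical fuel: $|\sigma'|\le\lip(\sigma)$, H\"older, stationarity and $A_t$, Lemma \ref{L:Du} with $k=4$, the bound $\Pi^{(N)}\le N^{-d}$ on three of the four $\Pi$-factors, and unit-mass heat-kernel integrations; both yield the factor $t^3N^{-3d}$ (your $\int_s^t\d r\le t$ plus $t^2$ from Minkowski, versus the paper's $r^2$ from the double $\d s\,\d s'$ inside $\mathcal{A}\mathcal{A}$ plus one outer $\d r$). What your route buys is that you never need to justify the stochastic Fubini interchange; what it costs is the fourfold expansion of $G_s^2$ and the conditional isometry. Two harmless bookkeeping points: the three $f$-integrations ($f(\d z)$, $f(\d z')$, $f(\d w)$) produce $f(\R^d)^3$ rather than your stated $f(\R^d)^2$, which is immaterial since $L_2$ may depend on $f(\R^d)$; and the observation $\E[G_s\mid\mathcal{F}_s]=0$ is not actually needed, since $\Var(G_s)\le\E[G_s^2]$ already suffices for the Minkowski step.
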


\begin{proof}
	Choose and fix $N,t>0$ and $\varepsilon\in(0\,,1)$ throughout. It follows from
	\eqref{E:DvT}, the definition of the Hilbert space $\mathcal{H}$,
	and a standard appeal to a stochastic Fubini theorem that
	\begin{align*}
		&\left\< \mathcal{T}\,,v_{N,t}(g)\right\>_{\mathcal{H}}
			= \int_0^t\d s\int_{\R^d}\d y\int_{\R^d}f(\d w)\
			\mathcal{T}(s\,,y)v_{N,t}(g)(s\,,y-w) \\
		&= \int_{(0,t)\times\R^d}\left[\int_0^r\d s\int_{\R^d}\d y\int_{\R^d}f(\d w)\
			\Pi^{(N)}_{s,z} \sigma'(u(r\,,z))
			D_{s,y} u(r\,,z)\Pi^{(N)}_{s,y-w}\sigma(u(s\,,y-w))\right]\eta(\d r\,\d z)\\
		&=: \int_{(0,t)\times\R^d} \mathcal{A}(r\,,z)\,\eta(\d r\,\d z),
	\end{align*}
	notation being clear.
	Among other things, we see from the above representation that the random variable
	$\<\mathcal{T}\,,v_{N,t}(g)\>_{\mathcal{H}}$ has mean zero. Therefore,
	\begin{equation}\label{E:VA}
		\Var\left( \left\< \mathcal{T}\,,v_{N,t}(g)\right\>_{\mathcal{H}}\right)
		= \E\left(\left\< \mathcal{T}\,,v_{N,t}(g)\right\>_{\mathcal{H}}^2\right)\\
		=\int_0^t\d r\int_{\R^d}\d a\int_{\R^d} f(\d b)\
		\E\left[\mathcal{A}(r\,,a)\mathcal{A}(r\,,a-b)\right].
	\end{equation}
	The integrand is bounded as follows:
	\begin{align*}
		&\E\left| \mathcal{A}(r\,,a)\mathcal{A}(r\,,a-b) \right|\\
		&\le
			\int_0^r\d s\int_{\R^d}\d y\int_{\R^d}f(\d w)
			\int_0^r\d s'\int_{\R^d}\d y'\int_{\R^d}f(\d w')\
			\Pi^{(N)}_{s,a} \Pi^{(N)}_{s,y-w}\Pi^{(N)}_{s'\!,a-b}\Pi^{(N)}_{s',y'-w'}\\
		&\quad\times\E\bigg[ \left| \sigma'(u(r\,,a)) D_{s,y} u(r\,,a)\sigma(u(s\,,y-w)) \right|\\
		&\qquad \qquad \quad \times \left|  \sigma'(u(r\,,a-b))
			D_{s'\!,y'} u(r\,,a-b)\sigma(u(s',y'-w'))\right|\bigg]\\
		&\le[\lip(\sigma)]^2\int_0^r\d s\int_{\R^d}\d y\int_{\R^d}f(\d w)
			\int_0^r\d s'\int_{\R^d}\d y'\int_{\R^d}f(\d w')\
			\Pi^{(N)}_{s,a} \Pi^{(N)}_{s,y-w}\Pi^{(N)}_{s'\!,a-b}\Pi^{(N)}_{s',y'-w'}\\
		&\quad\times
			\left\| D_{s,y} u(r\,,a) \right\|_4 \left\| \sigma(u(s\,,y-w)) \right\|_4
			\left\| D_{s'\!,y'} u(r\,,a-b) \right\|_4 \left\| \sigma(u(s',y'-w'))\right\|_4,
	\end{align*}
	thanks to H\"older's inequality.
	Thus, it follows from Lemma \ref{L:Du} and the stationarity of
	$x\mapsto u(s\,,x)$ that
	\begin{align*}
		\E\left| \mathcal{A}(r\,,a)\mathcal{A}(r\,,a-b) \right|
			&\le\left[ A_tC_{t,4,\varepsilon,\sigma}\lip(\sigma)\right]^2
			\int_0^r\d s\int_{\R^d}\d y\int_{\R^d}f(\d w)
			\int_0^r\d s'\int_{\R^d}\d y'\int_{\R^d}f(\d w')\\
		&\quad\times
			\Pi^{(N)}_{s,a} \Pi^{(N)}_{s,y-w}\Pi^{(N)}_{s'\!,a-b}\Pi^{(N)}_{s',y'-w'}
			\bm{p}_{r-s}(a-y) \bm{p}_{r-s'}(a-b-y')\\
		&\le\left[ A_tC_{t,4,\varepsilon,\sigma}\lip(\sigma)\right]^2
			\int_0^r\d s\int_{\R^d}\d y\int_{\R^d}f(\d w)
			\int_0^r\d s'\int_{\R^d}\d y'\int_{\R^d}f(\d w')\\
		&\quad\times
			N^{-2d}\Pi^{(N)}_{s,y-w}\Pi^{(N)}_{s',y'-w'}
			\bm{p}_{r-s}(a-y) \bm{p}_{r-s'}(a-b-y').
	\end{align*}
	This and the semigroup property of the heat kernel yield
	\begin{align*}
		\int_{\R^d}\E\left| \mathcal{A}(r\,,a)\mathcal{A}(r\,,a-b) \right|\d a
			&\le\frac{\left[ A_tC_{t,4,\varepsilon,\sigma}\lip(\sigma)\right]^2}{N^{2d}}
			\int_0^r\d s\int_{\R^d}\d y\int_{\R^d}f(\d w)
			\int_0^r\d s'\int_{\R^d}\d y'\int_{\R^d}f(\d w')\\
		&\hskip.5in\times
			\Pi^{(N)}_{s,y-w}\Pi^{(N)}_{s',y'-w'}
			\bm{p}_{2r-s-s'}(y-b-y').
	\end{align*}
	Apply the simple bound $\Pi^{(N)}_{s',y'-w'}\le N^{-d}$ and first compute the $\d y'$-integral
	to remove the term $\bm{p}_{s-s'}(y-b-y')$, and then the $\d y$-integral to see that
	\[
		\int_{\R^d}\E\left| \mathcal{A}(r\,,a)\mathcal{A}(r\,,a-b) \right|\d a
		\le\frac{\left[ rf(\R^d)A_tC_{t,4,\varepsilon,\sigma}\lip(\sigma)\right]^2}{N^{3d}}.
	\]
	Plug this inequality into \eqref{E:VA} to complete the proof.
\end{proof}

\begin{proof}[Proof of Theorem \ref{T:TV_Bnd}]
	Since $\Var(X+Y)\le 2\Var(X)+2\Var(Y)$
	for all random variables $X$ and $Y$, \eqref{E:K:g(z)=z}, and
	Lemmas \ref{L:||v||:g(z)=z} and \ref{L:Var(T,v):g(z)=z} together yield
	\begin{equation}\label{E:Var(K_N)_g(z)=z}
		\Var(K_N) \le 2(L_1+L_2) t^3 C_{t,4,\varepsilon,\sigma}^2A_t^2 N^{-d}.
	\end{equation}
	Because $|\sigma(z)|\le |\sigma(0)|+\lip(\sigma)|z|$ for all $z\in\R^d$,
	the large-deviations method of Foondun and Khoshnevisan \cite{FoondunKhoshnevisan2013}
	implies that $A_t \le |\sigma(0)| + \lip(\sigma) \sup_{s\in(0,t)}\|u(s\,,0)\|_4
	\le a\exp(bt)$ for all $t>0$, and for strictly positive numbers $a,b$ that depend
	only on $(f\,,\sigma)$.
	And Lemma \ref{L:Du} shows that $C_{t,4,\varepsilon,\sigma}\le c\exp(dt)$
	for all $t>0$ and positive numbers $c$ and $d$ that depend only on $(f\,,\sigma)$.
	This proves immediately the existence of positive real numbers $\ell$ and $\lambda$
	--- depending only on $(f\,,\sigma)$ --- such that
	\[
		\Var(K_N) \le \frac{\ell\e^{2\lambda t}}{N^d}.
	\]
	It is easy to see that $d_{\rm TV}(\alpha X\,,\alpha Y)$ does not depend on
	the numerical value of $\alpha >0$ for all random variables $X$ and $Y$. Therefore,
	as long as $\mathbf{B}_{N,t}(g)>0$, \eqref{E:Var_N} implies that
	\begin{align*}
		d_{\rm TV} \left( \frac{\mathcal{S}_{N,t}(g)}{\sqrt{\Var(\mathcal{S}_{N,t}(g))}}
			~,~{\rm N}(0\,,1) \right)
			& = d_{\rm TV}\left( \frac{N^{d/2} \mathcal{S}_{N,t}(g)}{
		 	\sqrt{\mathbf{B}_{N,t}(g)}}\,,Z\right)\\
		&= d_{\rm TV}\left( N^{d/2}\mathcal{S}_{N,t}(g)\,,\sqrt{\mathbf{B}_{N,t}(g)}\, Z\right)
			\le \frac{2}{\mathbf{B}_{N,t}(g)}\E\left| K_N-\E(K_N)\right|;
	\end{align*}
	see \eqref{E:K} for the last inequality. Apply the Cauchy-Schwarz inequality
	[see also \eqref{E:SM3}] and the preceding
	upper bound for $\Var(K_N)$ to complete the proof with $L=2\sqrt{\ell}$,
	since $\lim_{N\to\infty}\mathbf{B}_{N,t}(g)=\mathbf{B}_t(g)$ implies that
	indeed $\mathbf{B}_{N,t}(g)>0$ for all sufficiently large $N>0$.
\end{proof}

\section{Parabolic Anderson Model: Proof of Theorem \ref{T:PAM}}\label{S:PAM}

We begin by trying to mimic the proof of Theorem \ref{T:TV_Bnd},
except we do not assume that $g(v)=v$ for all $v\in\R^d$.

Recall $K_N$ from \eqref{E:KN}  and note that
\[
	\Var( K_N ) = N^{2d} \Var\left(
	 \int_0^t \d s \int_{\R^d} \d y\
	 D_{s,y} \mathcal{S}_{N,t}(g)
		\left( \E[ D_{s,\bullet} \mathcal{S}_{N,t}(g)  \mid \mathcal{F}_s]*f \right)(y) \right).
\]
Therefore, we may take square roots and appeal to Minkowski's inequality [cf.\
\eqref{E:Var(|v|)}] in order to see that
\begin{equation}\label{E:ecu3}
	\sqrt{\Var(K_N)} \le N^d \int_0^t   \d s\,
	\sqrt{\Var\left(  \left\<   \E\left[ D_{s,\bullet } \mathcal{S}_{N,t} (g)
	\mid  \mathcal{F}_s\right] ~,~ D_{s,\bullet} \mathcal{S}_{N,t}(g) \right\>_{\mathcal{H}_0}
	\right)}.
\end{equation}
We have seen already [see \eqref{E:ec2} and \eqref{E:Du}] that
\begin{align} \label{E:DS2}
	D_{s,y} \mathcal{S}_{N,t}(g)= N^{-d} \int_{[0,N]^d}  g'(u(t\,,x))  D_{s,y} u(t\,,x)\, \d x,
.\end{align}
where $Du(t\,,x) $ satisfies the linear stochastic integral equation,
\[
	D_{s,y} u(t\,,x)  = \bm{p}_{t-s}(x-y) u(s\,,y) +
	\int_{(s,t)\times\R^d}\bm{p}_{t-r}(x-z)  D_{s,y} u(r\,,z)
	\,\eta (\d r\,\d z).
\]
See \eqref{E:mild}.
It follows form these remarks that
\begin{align*}
	D_{s,y} \mathcal{S}_{N,t}(g)&=  u(s\,,y) \,
		N^{-d}\int_{[0,N]^d}  g'(u(t\,,x))\bm{p}_{t-s}(x-y) \,\d x \\
	&\qquad + N^{-d} \int_{ [0,N]^d}  g'(u(t\,,x))
		\left[\int_{(s,t)\times\R^d}\bm{p}_{t-r}(x-z)   D_{s,y} u(r\,,z)
		\,\eta (\d r\,\d z)\right]\d x.
\end{align*}
This is more or less \eqref{E:DvT}, except that: (i) $g$ is not necessarily linear;
(ii) $\sigma(z)=z$ for all $z\in\R$ here; and most significantly (iii) because of
the presence of the non-anticipative term $g'(u(t\,,x))$, we
can no longer interchange the $\d x$-integral with the $\eta(\d r\,\d z)$-stochastic integral.
It is in fact because of item (iii) that we will need to adopt a different way to estimate
$\Var(K_N)$ in the proof of
Theorem \ref{T:TV_Bnd}. And at present the only other proof that we can construct
works only when $g\in C^2(0,\infty)$ satisfies \eqref{E:MmCond_PAM} and $\sigma(z)=z$ for all $z\in\R$,
whence follow the conditions of Theorem \ref{T:PAM}.

According to \eqref{E:ecu3} and the proof of Theorem \ref{T:TV_Bnd},
the key step of the argument is to find an efficient bound for
\[
	\mathcal{V}(s) =\mathcal{V}_{N,t,g}(s) :=
	{\rm Var} \left(  \left\< \E \left[ D_{s,\bullet } \mathcal{S}_{N,t} (g)
	\mid \mathcal{F}_s \right]
	~,~ D_{s,\bullet} \mathcal{S}_{N,t}(g) \right\>_{\mathcal{H}_0} \right)
	\qquad[0<s<t].
\]
However, as was alluded to earlier in this section, the above estimation will need
to involve other ideas than those that were used in the course of the proof of
Theorem \ref{T:TV_Bnd}. Indeed, we plan to estimate $\mathcal{V}(s)$
by appealing to the Poincar\'e inequality \eqref{E:Poincare}.
In order for this method to work, we will need to assume
that $g\in C^2(0,\infty)$ satisfies  \eqref{E:MmCond_PAM}.

In the following, we abbreviate $\mathcal{S}_{N,t} (g)$ by $\mathcal{S}_{N}$
in order to simplify the typesetting. With this convention in mind,
the Poincar\'e inequality yields
\begin{equation} \label{E:EQ7}
	\mathcal{V}(s)
	\le \E \left( \left\|
	D  \left\<\E \left[ D_{s,\bullet } \mathcal{S}_N
	\mid \mathcal{F}_s \right] ~,~
	D_{s,\bullet} \mathcal{S}_N \right\>_{\mathcal{H}_0}  \right\|^2_{\mathcal{H}} \right).
\end{equation}
Now,
\begin{align*}
	D_{r,z}\left\<\E\left[ D_{s,\bullet } \mathcal{S}_N
		\mid \mathcal{F}_s\right] ~,~
		D_{s,\bullet} \mathcal{S}_N \right\>_{\mathcal{H}_0}
	= &\quad \int_{\R^d}\d y\int_{\R^d} f(\d y_1)
		\E\left[ D_{s,y } \mathcal{S}_N \mid \mathcal{F}_s \right]
		D_{r,z} D_{s,y-y_1} \mathcal{S}_N \\
	  & + \int_{\R^d}\d y\int_{\R^d} f(\d y_1)\
		D_{s,y } \mathcal{S}_N\  D_{r,z}
		\E\left[ D_{s,y-y_1 } \mathcal{S}_N \mid \mathcal{F}_s\right]\\
	=: &\quad  \Psi^{N,1}_{r,z} + \Psi^{N,2} _{r,z}.
\end{align*}
By Proposition 1.2.8 of Nualart \cite{Nualart}, one can exchange the
Malliavin derivative and the conditional expectation so that
\[
	\Psi^{N,2} _{r,z} =  \int_{\R^d}\d y\int_{\R^d} f(\d y_1)\
	D_{s,y } \mathcal{S}_N\
	\E\left[ D_{r,z}D_{s,y-y_1 } \mathcal{S}_N \mid
	\mathcal{F}_s\right].
\]
Therefore, it follows  that
\begin{equation} \label{E:EQ8}
	\mathcal{V}(s) \le 2 \E\left(  \left\| \Psi^{N,1} \right\|^2_{\mathcal{H}} \right) +
	2\E \left( \left\| \Psi^{N,2} \right\|^2_{\mathcal{H}} \right).
\end{equation}
Both expectations can be estimated in the same way, and both estimates will yield
the same upper bound. Therefore,
we present only the details of the estimate for the first expectation.

We can write
\begin{align*}
	\E\left(\left\| \Psi^{N,1}\right\|^2_{\mathcal{H}} \right)
	& = \int_0^t\d r\int_{\R^d}\d z\int_{\R^d}\d y\int_{\R^d}\d y'
		\int_{\R^d}f(\d z')\int_{\R^d}f(\d y_1)\int_{\R^d}f(\d y'_1)\\
	& \quad \times
		\E \bigg[ \quad\E \left[ D_{s,y } \mathcal{S}_N \mid
		\mathcal{F}_s \right]  D_{r,z}
		D_{s,y-y_1}\mathcal{S}_N\\
	& \qquad\quad \times
		\E \left[ D_{s,y' } \mathcal{S}_N \mid
		\mathcal{F}_s \right]  D_{r,z-z'} D_{s,y'- y'_1}\mathcal{S}_N  \bigg].
\end{align*}
Then an application of H\"older inequality shows that
\begin{align}\begin{aligned} \label{E:PsiN}
	\E\left(\left\| \Psi^{N,1}\right\|^2_{\mathcal{H}} \right)
	&  \le\int_0^t\d r\int_{\R^d}\d z\int_{\R^d}\d y\int_{\R^d}\d y'
		\int_{\R^d}f(\d z')\int_{\R^d}f(\d y_1)\int_{\R^d}f(\d y'_1)\\
	& \quad \times
		\left\| D_{s,y } \mathcal{S}_N \right\|_4 \,
		\left\| D_{r,z} D_{s ,y-y_1}\mathcal{S}_N \right\|_4\,
		\left\| D_{s,y' } \mathcal{S}_N \right\|_4 \,
		\left\| D_{r,z-z'} D _{s,y'- y'_1}\mathcal{S}_N \right\|_4 .
\end{aligned}\end{align}
According to Lemma \ref{L:Du} and H\"older's inequality,
\[
	\|  D_{s,y } \mathcal{S}_N \|_4
	\le  \| g'(u(t,0)) \|_k \frac  { 1} {N^{d}} \int_{[0,N]^d}
	\left\| D_{s,y} u(t\,,x)\right\|_{\frac {4k}{k-4}} \d x \\
	\le C_{t,\frac {4k}{k-4},\varepsilon}  \| g'(u(t,0)) \|_k  \Pi^{(N)}_{s,y},
\]
where $\varepsilon\in(0\,,1)$ is arbitrary and $\Pi^{^(N)}$ was
defined in \eqref{E:Pi}. Next we apply \eqref{E:ec2}
in order to see that
\begin{align}\label{E:DDS}\begin{aligned}
	D_{r ,z}D_{s,y} \mathcal{S}_N  = & N^{-d} \int_{[0,N]^d} g'(u(t\,,x)) D_{r,z} D_{s,y} u(t\,,x)\,\d x \\
					 & + N^{-d} \int_{[0,N]^d} g''(u(t\,,x)) D_{r,z} u(t\,,x) \, D_{s,y} u(t\,,x)\, \d x.
\end{aligned}\end{align}
Therefore,  Proposition \ref{P:secondD}, Lemma \ref{L:Du}, and the notation for $\Pi^{(N)}$
in \eqref{E:Pi} together yield
\begin{align*}
	\| D_{r,z} D_{s ,y}\mathcal{S}_N \|_4
	&\le C^*_{t,\frac{4k}{k-4},\varepsilon}   \| g'(u(t,0)) \|_k
		\left[ \bm{p}_{s-r} (y-z)  \mathbf{1}_{\{r\le s\}}
		 \Pi^{(N)}_{s,y}
		 +\bm{p}_{r-s} (z-y) \mathbf{1}_{\{s\le r\}}
		\Pi^{(N)}_{r,z}\right]\\
	&\quad+  \frac {C_{t,\frac {8k}{k-4},\varepsilon, \sigma}^2  \| g''(u(t,0)) \|_k } {N^d}
		\int_{[0,N]^d}  \bm{p}_{t-r} (x-z) \bm{p}_{t-s} (x-y)\,\d x\\
	&\le\frac {C^*_{t,\frac {4k} {k-4},\varepsilon}  \| g'(u(t,0)) \|_k }{N^d}
		\bm{p}_{|s-r|} (y-z) +  \frac{C_{t,\frac {8k}{k-4},\varepsilon, \sigma}^2    \| g''(u(t,0)) \|_k}{N^d}
		\bm{p}_{2t-r-s} (y-z),
\end{align*}
owing to the semigroup property of the heat kernel and Rademacher's theorem
(Remark \ref{R:Lip}), where the constant $C^*_{t,4,\epsilon}$ is defined in \eqref{E:Cstar}.
Thus,
\begin{align*}
	\E\left(\left\|  \Psi^{N,1} \right\|^2_{\mathcal{H}} \right) \le
	& C_1 \| g'(u(t,0)) \|_k ^2\\
	&\times \int_0^t\d r\int_{\R^d}\d z\int_{\R^d}\d y\int_{\R^d}\d y'
		\int_{\R^d}f(\d z')\int_{\R^d}f(\d y_1)\int_{\R^d}f(\d y'_1)\
		\Pi^{(N)}_{s,y}\Pi^{(N)}_{s,y'}\\
	&\times \left(
		\frac {C_2  \| g'(u(t,0)) \|_k }{N^d}
		\bm{p}_{|s-r|} (y-y_1-z)    +  \frac {C_3  \| g''(u(t,0)) \|_k}{N^d}
		\bm{p}_{2t-r-s} (y-y_1-z)  \right) \\
	&\times \left(
		\frac {C_2\| g'(u(t,0)) \|_k}{N^d}
		\bm{p}_{|s-r|} (y'-y'_1-z+z')    +  \frac {C_3  \| g''(u(t,0)) \|_k}{N^d}
		\bm{p}_{2t-r-s} (y'-y'_1-z+z')  \right),
\end{align*}
where $C_1=C^2_{t,\frac {4k}{k-4},\varepsilon}  $,
$C_2=C^*_{t,\frac {4k}{k-4},\varepsilon} $ and $C_3=C_{t,\frac {8k}{k-4},\varepsilon, \sigma}^2$.
We apply the bound $\Pi^{(N)}_{s,y'}\le N^{-d}$ and  integrate [first
$\d y'$, then $\d z$, and then finally $\d y$, and in this order],
using also the inequality $\int_{\R^d}\Pi^{(N)}_{s,y}\,\d y\le1$, in order to obtain
\begin{align} \label{E_:BoundPhi}
	\E  \left(\| \Psi^{N,1}\|^2_{\mathcal{H}}\right) \le  C_4
	[f(\R^d)]^3t N^{-3d},
\end{align}
where
\begin{align} \label{E:C_1}
	C_4= C_1 \| g'(u(t,0)) \|_k^2
	\left[ C_2 \| g'(u(t,0)) \|_k+ C_3  \| g''(u(t,0)) \|_k\right]^2.
\end{align}
The very same method applies and yields the same upper bound for
$\E(\|  \Psi^{N,2} \|^2_{\mathcal{H}} )$.
Therefore, we may deduce from (\ref{E:EQ7}) and (\ref{E:EQ8}) that
$\mathcal{V}(s) \le 4C_4 t N^{-3d}[f(\R^d)]^3.$
We plug this estimate into \eqref{E:ecu3} to obtain the following:
\begin{align}\label{E:VarKn}
	\sqrt{\Var(K_N)} \le N^d \int_0^t
	\sqrt{\mathcal{V}(s)}\,\d s
	\le 2C_4^{1/2} t^{3/2} [f(\R^d)]^{3/2} N^{-d/2}.
\end{align}
This is the replacement of the inequality \eqref{E:Var(K_N)_g(z)=z}
in the present setting of the parabolic Anderson model; the remainder of the proof
of Theorem \ref{T:TV_Bnd} goes through unhindered to complete
the proof of  Theorem \ref{T:PAM}.

\appendix

\section{Association for parabolic SPDEs}

We conclude the paper by showing how we can adapt the ideas of this paper to also
document a result about the association of the solution to \eqref{E:SHE}. First,
let us recall the following.

\begin{definition}[Esary, Proschan, and Walkup \cite{EPW}]
	A random vector $X:=(X_1\,,\ldots,X_n)$ is said to be \emph{associated} if
	\begin{equation}\label{E:assoc}
		\Cov[h_1(X)\,,h_2(X)]\ge0,
	\end{equation}
	for every pair of functions $h_1,h_2:\R^n\to\R$
	that are nondecreasing in every coordinate and satisfy $h_1(X),h_2(X)\in L^2(\Omega)$.
\end{definition}

It is easy to see that $h_1$ and $h_2$ could instead be assumed to be
nonincreasing in every coordinate: One simply replaces $(h_1\,,h_2)$ by $(-h_1\,,-h_2)$.
Other simple variations abound. Esary, Proschan, and Walkup
\cite{EPW} prove the  more interesting result that $X$ is associated iff
\eqref{E:assoc} holds for all $h_1,h_2\in C_b(\R^n)$ that are nondecreasing coordinatewise.\footnote{%
	Recall that $C_b(\R^n)$ denotes the collection of all real functions on $\R^n$
	that are bounded and continuous.}
This and the dominated convergence
theorem together imply the following simple criterion for association; see also Pitt \cite{Pitt}.

\begin{lemma}
	$X=(X_1\,,\ldots,X_n)$ is associated iff \eqref{E:assoc} holds for all $h_1,h_2\in C^\infty_c(\R^n)$
	such that $\partial_i h_j\ge0$ for every $i=1,2$ and $j=1,\ldots,n$.\footnote{Recall that
	the subscript ``$c$'' in $C^\infty_c$ refers to functions of compact support.}
\end{lemma}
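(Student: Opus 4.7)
The \emph{only if} direction is immediate from the Esary--Proschan--Walkup criterion recalled just above the lemma: any $h\in C_c^\infty(\R^n)$ with $\partial_i h\ge 0$ for each coordinate $i$ is bounded, continuous, and coordinatewise nondecreasing, so association forces the covariance inequality.

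For the \emph{if} direction, my plan is to use that same Esary--Proschan--Walkup criterion as the target and reduce to it by approximation. Specifically, given $h_1,h_2\in C_b(\R^n)$ that are coordinatewise nondecreasing, I would produce sequences $h_j^{(k)}$ in the test class of the lemma for which $h_j^{(k)}(X)\to h_j(X)$ in $L^2(\Omega)$; passing \eqref{E:assoc} to the limit via the dominated convergence theorem (as the hint in the excerpt suggests) then closes the argument.

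The construction of $h_j^{(k)}$ has two stages. First, I would mollify by convolution with a standard nonnegative $\phi_\varepsilon\in C_c^\infty(\R^n)$ of unit integral: the function $h_j^{(\varepsilon)}:=h_j*\phi_\varepsilon$ is smooth, bounded by $\|h_j\|_\infty$, and, because translating a coordinatewise nondecreasing function by any nonnegative shift preserves coordinatewise monotonicity, satisfies $\partial_i h_j^{(\varepsilon)}\ge 0$ for each $i$. Continuity of $h_j$ yields $h_j^{(\varepsilon)}\to h_j$ pointwise as $\varepsilon\downarrow 0$, and uniform boundedness plus dominated convergence then give $h_j^{(\varepsilon)}(X)\to h_j(X)$ in $L^2(\Omega)$. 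Second, I would truncate to enforce compact support. Multiplication by a scalar cutoff destroys monotonicity, so instead I would compose $h_j^{(\varepsilon)}$ with a coordinatewise monotone clamping map: fix a smooth nondecreasing $\psi_R\colon\R\to[-R,R]$ with $\psi_R(t)=t$ for $|t|\le R/2$ and $\psi_R$ leveling off smoothly to constants outside $[-R,R]$, set $\Psi_R(x):=(\psi_R(x_1),\ldots,\psi_R(x_n))$, and define $h_j^{(\varepsilon,R)}(x):=h_j^{(\varepsilon)}(\Psi_R(x))-h_j^{(\varepsilon)}(-R,\ldots,-R)$. This is smooth, bounded, and coordinatewise nondecreasing (composition of coordinatewise nondecreasing maps), and it agrees with $h_j^{(\varepsilon)}(x)-h_j^{(\varepsilon)}(-R,\ldots,-R)$ on $[-R/2,R/2]^n$. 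Absorbing the additive constant into the covariance places $h_j^{(\varepsilon,R)}$, up to a constant, into the hypothesized test class; a further dominated convergence argument on $\{\|X\|_\infty\le R/2\}$ yields $h_j^{(\varepsilon,R)}(X)\to h_j^{(\varepsilon)}(X)$ in $L^2(\Omega)$ as $R\to\infty$.

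The main obstacle is precisely the second stage: coordinatewise monotonicity is strictly incompatible with genuine compact support (any $C_c^\infty$ function with $\partial_i h\ge 0$ for all $i$ is identically zero), so the truncation step must be read modulo additive constants. This is harmless for \eqref{E:assoc}, since constants drop out of covariances, but it is the critical observation that lets the hypothesized $C_c^\infty$ test class be rich enough to drive the approximation.
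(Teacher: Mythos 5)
Your mollification stage and the ``only if'' direction are fine, and your overall route (reduce to the Esary--Proschan--Walkup $C_b$ criterion by approximation plus dominated convergence) is exactly the one-line argument the paper gestures at. The genuine gap is the truncation stage. The clamped function $h_j^{(\varepsilon,R)}=h_j^{(\varepsilon)}\circ\Psi_R-h_j^{(\varepsilon)}(-R,\ldots,-R)$ is not compactly supported, and it is not compactly supported up to an additive constant either: as all coordinates tend to $+\infty$ it tends to $h_j^{(\varepsilon)}(R,\ldots,R)-h_j^{(\varepsilon)}(-R,\ldots,-R)$, which is in general strictly positive, while it vanishes in the opposite orthant, and no single additive constant can kill both tails. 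Indeed, as you yourself observe, a coordinatewise nondecreasing $C^\infty_c(\R^n)$ function is identically zero, and the same one-dimensional monotonicity argument shows that ``compactly supported plus a constant'' together with coordinatewise monotonicity forces the function to be constant. Consequently your approximants never belong to the hypothesized test class, the assumed inequality \eqref{E:assoc} is never actually invoked on a nontrivial pair, and the ``if'' direction is not established. Read literally, the test class of the lemma is $\{0\}$, so no approximation scheme can close the argument: the hypothesis is vacuously satisfied by, say, $(Z,-Z)$ with $Z$ standard normal, which is not associated.

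The defect thus traces back to the statement itself rather than to your strategy: the class that is actually needed (and the one used in the paper's application, namely the chain-rule/Clark--Ocone computation behind Proposition \ref{P:associate} and Theorem \ref{T:assoc}) is that of smooth, \emph{bounded}, coordinatewise nondecreasing functions --- equivalently, smooth functions that are constant outside a compact set, i.e., with compactly supported nonnegative gradient --- for which $h(X)\in L^2(\Omega)$ is automatic. With the test class read in that way, your construction is correct and complete: mollify, clamp with $\Psi_R$, discard additive constants (harmless in covariances), and pass to the limit by dominated convergence to recover the $C_b$ criterion. So the mathematical content of your argument matches the intended (unwritten) proof; what fails is precisely the sentence claiming that the clamped approximants lie, modulo constants, in $C^\infty_c$, and that claim cannot be repaired without changing the class of test functions in the statement.
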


The notion of association has a natural extension to random processes. Here is one.

\begin{definition}
	A random field $\Phi=\{\Phi(t\,,x)\}_{t\ge0,x\in\R^d}$ is
	\emph{associated} if $(\Phi(t_1,x_1)\,,\ldots,\Phi(t_n\,,x_n))$ is associated
	for every $t_1,\ldots,t_n>0$ and $x_1,\ldots,x_n\in\R^d$.
\end{definition}

The following is the principal result of this appendix.

\begin{theorem}\label{T:assoc}
	Let $u$ be the solution to \eqref{E:SHE} with $u(0,\cdot)$ being a nonnegative measure such that $\left(u(0,\cdot)*\bm{p}_t\right)<\infty$ for all $t>0$.
	Under Dalang's condition \eqref{E:Dalang}, if $\sigma\circ u$ a.s.\ does not change signs; that is,
	\begin{equation}\label{E:sigma_sign}
		\P\left\{ \sigma(u(t\,,x))\sigma(u(s\,,y))\ge0\right\}=1
		\quad\text{for every $s,t>0$ and $x,y\in\R^d$,}
	\end{equation}
	then, $u$ is associated.
\end{theorem}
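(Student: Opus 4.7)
The plan is to combine the Clark--Ocone representation with a factorization identity for the Malliavin derivative of $u$. By the lemma stated just above the theorem, it will suffice to show $\Cov[h_1(U)\,, h_2(U)] \ge 0$ for every $U = (u(t_1\,,x_1)\,,\ldots, u(t_n\,,x_n))$ and every pair $h_1,h_2 \in C_c^\infty(\R^n)$ with $\partial_i h_j \ge 0$. Under these assumptions the chain rule gives $h_j(U) \in \mathbb{D}^{1,2}$, so the Clark--Ocone formula~\eqref{E:Clark-Ocone} together with the Walsh isometry will yield, for $T := \max_i t_i$,
\[
\Cov[h_1(U)\,, h_2(U)] = \E\int_0^T \d s \int_{\R^d} \d y \int_{\R^d} f(\d z)\, \E[D_{s,y} h_1(U) \mid \mathcal{F}_s]\, \E[D_{s,y-z} h_2(U) \mid \mathcal{F}_s].
\]

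The heart of the argument will be a factorization identity: $D_{s,y} u(t\,,x) = \sigma(u(s\,,y))\,Y(t\,,x\,;s\,,y)$ a.s., where $Y$ is the mild solution of the linear SPDE
\[
Y(t\,,x\,;s\,,y) = \bm{p}_{t-s}(x-y) + \int_{(s,t)\times\R^d}\bm{p}_{t-r}(x-z)\,\sigma'(u(r\,,z))\,Y(r\,,z\,;s\,,y)\,\eta(\d r\,\d z).
\]
I will prove this by checking that both sides solve the same linear SPDE---using that $\sigma(u(s\,,y))$ is $\mathcal{F}_s$-measurable and hence predictable (cf.\ the SPDE for $D_{s,y}u$ in the proof of Theorem~\ref{T:Positive_D})---and invoking $L^2$-uniqueness. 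I will further show $Y \ge 0$ a.s.\ by approximating $\delta_y$ by nonnegative smooth initial data $\phi^\varepsilon$ and applying the comparison principle for linear SPDEs with bounded adapted coefficient (as in Chen and Huang \cite{CH19Density}).

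With these preparations, the chain rule and the $\mathcal{F}_s$-measurability of $\sigma(u(s\,,y))$ give
\[
\E[D_{s,y} h_j(U) \mid \mathcal{F}_s] = \sigma(u(s\,,y)) \cdot G^j_{s,y}, \quad G^j_{s,y} := \E\Bigl[\sum_{i=1}^n \partial_i h_j(U)\,Y(t_i\,,x_i\,;s\,,y) \,\Big|\, \mathcal{F}_s\Bigr],
\]
where $G^j_{s,y} \ge 0$ because $\partial_i h_j \ge 0$ and $Y \ge 0$. The Clark--Ocone integrand will equal $\sigma(u(s\,,y))\sigma(u(s\,,y-z))G^1_{s,y}G^2_{s,y-z}$, which is a.s.\ nonnegative by the sign hypothesis~\eqref{E:sigma_sign}. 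Since $f \ge 0$, integrating and taking expectation then yields $\Cov[h_1(U)\,, h_2(U)] \ge 0$.

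The main obstacle is the pair of assertions about $Y$, particularly its pathwise nonnegativity. The factorization itself is a routine uniqueness argument for a linear SPDE with $\mathcal{F}_s$-measurable coefficient $\sigma(u(s\,,y))$ in front, but nonnegativity of $Y$ is delicate because the initial datum is a Dirac measure and we lack the strong hypotheses ($\sigma(0)=0$ and the reinforced Dalang condition) invoked in Theorem~\ref{T:Positive_D} to obtain strict positivity. The approximation by smooth nonnegative data combined with the comparison principle and $L^2$-stability under Dalang's condition~\eqref{E:Dalang} should nevertheless deliver $Y\ge 0$ in the limit, which is all that is needed here.
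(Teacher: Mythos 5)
Your overall strategy is the same as the paper's: reduce to smooth coordinatewise-nondecreasing $h_1,h_2$, apply the Clark--Ocone formula \eqref{E:Clark-Ocone} and the isometry to write the covariance as an expectation of a nonnegative integrand, and reduce everything to the a.s.\ nonnegativity of $D_{s,y}u(t,x)$ (your factorization $D_{s,y}u(t,x)=\sigma(u(s,y))\,Y(t,x;s,y)$ is exactly the device used for $V_{s,z}$ in the proof of Theorem \ref{T:Positive_D}, and keeping the product $\sigma(u(s,y))\sigma(u(s,y-z))\ge0$ in the integrand is just an equivalent packaging of the paper's ``replace $\sigma$ by $-\sigma$'' reduction). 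The final Clark--Ocone computation is the same as in Proposition \ref{P:associate}.

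The one place where you diverge, and where your plan as written has a gap, is the proof that $Y\ge 0$. You propose to smooth only the Dirac initial datum and invoke ``the comparison principle for linear SPDEs with bounded adapted coefficient (as in Chen and Huang \cite{CH19Density})'' under Dalang's condition \eqref{E:Dalang} alone. But the positivity results you would be citing (Theorem 1.8 of \cite{CH19Density}, Theorem 1.5 of \cite{CHN}, Theorem 1.6 of \cite{CH19}) require the reinforced condition \eqref{E:Dalang2}, and the comparison principle of \cite{CH19} is stated for deterministic Lipschitz $\sigma$, not for a random bounded adapted coefficient; so smoothing the initial data does not by itself bring you within the hypotheses of any cited result when $f$ only satisfies \eqref{E:Dalang}. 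The paper resolves exactly this difficulty by mollifying the \emph{noise} instead: with $\eta^\varepsilon_{s,z}$ the correlation becomes $f_\varepsilon=\bm{p}_{2\varepsilon}*f$, which satisfies \eqref{E:Dalang2} with $\alpha=1$, so the Chen--Huang positivity theorems apply (conditionally on $\mathcal{F}_s$, using only boundedness of $H_{s,z}=\sigma'\circ u$), and then one passes to the limit $\varepsilon\downarrow 0$ using the $L^2$-stability in the noise (Theorem 1.9 of \cite{CH19}) to conclude $D_{s,z}u(t,x)\ge 0$. If you replace your initial-data approximation by this noise-mollification step (or supply an independent proof of nonnegativity preservation for the linear equation with rough noise under \eqref{E:Dalang}), the rest of your argument goes through and coincides with the paper's proof.
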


Theorem \ref{T:assoc} reduces to   simpler results in  two important
special cases that we present in the following two examples.

\begin{example}
	Condition \eqref{E:sigma_sign}  holds
	tautologically when $\sigma$ is a constant, say $\sigma\equiv\sigma_0\in\R$.
	In that case, it is not hard to see that the solution to \eqref{E:SHE}
	is a Gaussian random field with  covariance function,
	\begin{align*}
		\Cov[u(t\,,x)\,,u(t',x')] = \sigma_0^2\int_0^t\left( \bm{p}_{2s}*f\right)(x-x')\,\d s.
	\end{align*}
	Because this quantity is $\ge0$ for every $t,t'>0$ and $x,x'\in\R^d$, Pitt's characterization
	of Gaussian random vectors
	\cite{Pitt} immediately implies the association of $u$ when $\sigma$ is a constant.
\end{example}

\begin{example}
	If $\sigma(u)=u$,  $d=1$, and $f=\delta_0$, then the sign condition \eqref{E:sigma_sign}
	holds simply because $u(t\,,x)\ge0$ a.s. In this case, Corwin and
	Quastel \cite[Proposition 1]{CorwinQuastel13} have observed that $u$ is associated;
	see also Corwin and Ghosal \cite[Proposition 1.9]{CorwinGhosal18}. Strictly speaking,
	the statements of the latter two results are weaker that the association of $u$. But the proof
	of Proposition 1.9 of Corwin and Quastel ({\it ibid.}) goes through unhindered to imply
	association. Indeed, Corwin and Quastel note the said association from
	the well-known fact that all exclusion processes are associated
	(the FKG inequality for i.i.d.\ random variables).
\end{example}

The preceding examples are proved using two different ``direct hands-on'' methods. The general
case is quite a bit more involved. In order to avoid writing a lengthy proof, we merely outline
the proof, with detailed pointers to the requisite literature and preceding results of the present paper.

\begin{proof}[Outline of the proof of Theorem \ref{T:assoc}]
	By considering $-\sigma$ in place of $\sigma$ if need be, we may -- and will --
	assume without loss in generality that
	\[
		\sigma(u(t\,,x))\ge0\qquad\text{a.s.\ for every $(t\,,x)\in(0\,,\infty)\times\R^d$}.
	\]

	We begin by rehashing through the proof of Theorem \ref{T:Positive_D}, and recall that
	the Malliavin derivative of $u(t\,,x)$ a.s.\ solves
	\begin{equation}\label{D:2}
		D_{s,z}u(t\,,x) = \bm{p}_{t-s}(x-z) \sigma(u(s\,,z)) +
		\int_{\left(s,t\right)\times\R^d} \bm{p}_{t-r}(x-y)
		\sigma'(u(r\,,y)) D_{s,z} u(r\,,y)\,\eta(\d r\,\d y),
	\end{equation}
	for almost all $(s\,,z)\in(0\,,t)\times\R^d$. And, of course, $D_{s,z}u(t\,,x)=0$
	a.s.\ for a.e.\ $(s\,,z)\in(t\,,\infty)\times\R^d$.
	Note that the right-hand side of \eqref{D:2} is in fact a modification of $D_{s,z}u(t\,,x)$.
	Therefore, we can -- and will without loss of generality -- redefine $D_{s,z}u(t\,,x)$ so that
	\eqref{D:2} is an almost-sure identity for every $(s\,,z)\in(0\,,t)\times\R^d$, and that
	$D_{s,z}u(t\,,x):=0$ whenever $s\ge t$. It is not hard to deduce from
	\eqref{D:2}, and arguments involving continuity in probability,
	that the use of this particular modification does not change
	the law of the Malliavin derivative. However, it makes the following discussion simpler
	as we can avoid worrying about null sets.

	We aim to prove that,
	under Dalang's condition \eqref{E:Dalang} alone,
	\begin{equation}\label{D:ge:0}
		D_{s,z}u(t\,,x)\ge0\qquad\text{a.s.\ for every $(t\,,x)\in(0\,,\infty)\times\R^d$}.
	\end{equation}
	Once this is proved, one can appeal to the Clark-Ocone formula
	\eqref{E:Clark-Ocone} to conclude the theorem;
	see the proof of Theorem \ref{P:associate} for a similar argument.

	Choose and fix  $(s\,,z)\in(0\,,\infty)\times\R^d$, and consider a space-time random field
	$\Phi_{s,z}$ defined via
	\[
		\Phi_{s,z}(t\,,x) := D_{s,z}u(s+t\,,z+x)\qquad\text{for all $t\ge0$ and $x\in\R^d$}.
	\]
	Our claim \eqref{D:ge:0} is equivalent to the a.s.-nonnegativity of $\Phi_{s,z}(t\,,x)$.

	Observe (similarly to what we did in the proof of Theorem \ref{T:Positive_D}), that
	$\Phi_{s,z}$ solves
	\[
		\Phi_{s,z}(t\,,x) = \bm{p}_t(z+x)\sigma(u(s\,,z))+
		\int_{(0,t)\times\R^d} \bm{p}_{t-r}(x-y)
		H_{s,z}(t\,,y)\Phi_{s,z} (r\,,y)\,\eta_{s,z}(\d r\,\d y),
	\]
	for  $(t\,,x)\in(0\,,\infty)\times\R^d$, where $\eta_{s,z}$ is the same shifted
	Gaussian noise that arose in the course of the proof of Theorem \ref{T:Positive_D},
	and
	\[
		H_{s,z}(t\,,y) = \sigma'\left( u(s+t\,,z+y)\right),
	\]
	also as in the proof of Theorem \ref{T:Positive_D}.
	In other words, conditional on the sigma-algebra $\mathcal{F}_s$,
	the random field $\Phi_{s,z}$ solves the SPDE,
	\begin{equation}\label{E:Phi_SPDE}
		\partial_t\Phi_{s,z} = \tfrac12\Delta\Phi_{s,z} + \Phi_{s,z} H_{s,z}\eta_{s,z}
		\quad\text{subject to $\Phi_{s,z}(0\,,x) =\sigma(u(s\,,z))\delta_0(x)$.}
	\end{equation}
	This is a standard Walsh-type SPDE.
	In order to understand why this is the case, note first that $u(s\,,z)$ is $\mathcal{F}_s$-measurable
	and $\eta_{s,z}$ is independent of $\mathcal{F}_s$; see also the proof of Lemma
	\ref{T:Positive_D}. In order to complete the assertion about \eqref{E:Phi_SPDE} being a standard
	Walsh-type SPDE we note that
	$M_t(A) := \int_{(0,t)\times A}H_{s,z}(r\,,y)\,\eta_{s,z}(\d r\,\d y)$
	defines a martingale measure in the sense of Walsh. Hence, we view
	\eqref{E:Phi_SPDE} conditionally on $\mathcal{F}_s$ to see that it is a reformulation
	of the Walsh SPDE,
	\[
		\partial_t \Phi_{s,z} = \tfrac12\Delta\Phi_{s,z} + \Phi_{s,z}\dot{M}_t(x)
		\quad\text{subject to}\quad \Phi_{s,z}(0\,,x)=\sigma(u(s\,,z))\delta_0(x),
	\]
	also viewed conditionally on $\mathcal{F}_s$. In fact, we can interpret
	\eqref{E:Phi_SPDE} as a parabolic Anderson model with respect to the martingale measure $M$.
	Because of \eqref{E:f_finite}, and since $\sigma\in\lip$, $M$ is in fact a worthy
	martingale measure because $H_{s,z}$ is bounded; see Walsh \cite{Walsh}.

	For every $\varepsilon\in(0\,,1)$ let $\eta_{s,z}^\varepsilon$ denote the Gaussian noise
	\[
		\eta_{s,z}^\varepsilon(t\,,x)\,\d t := \int_{\R^d}\bm{p}_\varepsilon(x-y)\,\eta_{s,z}(\d t\,\d y).
	\]
	The spatial correlation measure $f_\varepsilon$ of $\eta_{s,z}^\varepsilon$ is
	in fact a function [not just a measure]
	and is given by $f_\varepsilon=\bm{p}_{2\varepsilon}*f.$ Because $f$ is a finite measure,
	it follows that $f_\varepsilon$ satisfies the extended Dalang condition \eqref{E:Dalang2} with
	$\alpha=1$. Let $\Phi_{s,z}^\varepsilon$ denote the unique solution to the SPDE,
\[
		\partial_t \Phi_{s,z}^\varepsilon = \tfrac12 \Delta\Phi_{s,z}^\varepsilon +
		(\sigma'\circ u)\Phi_{s,z}^\varepsilon\eta^{\varepsilon}_{s,z}
		\quad\text{subject to}\quad
		\Phi_{s,z}(0\,,x)=\sigma(u(s\,,z))\delta_0(x).
	\]
	This is the same SPDE as \eqref{E:Phi_SPDE}, but with respect to the mollified
	version $\eta_{s,z}^\varepsilon$ of the Gaussian noise $\eta_{s,z}$ in place
	of $\eta_{s,z}$.
	Because $f_\varepsilon$ satisfies \eqref{E:Dalang2} for some $\alpha>0$, Theorem 1.6
	of Chen and Huang \cite{CH19} [applied conditionally on $\mathcal{F}_s$]
	implies that, if $H_{s,z}$ were
	a constant [that is, $\sigma(u)\propto u$], then $\Phi_{s,z}^\varepsilon(t\,,x)>0$ a.s.\
	for every $t>0$ and $x\in\R^d$. Careful scrutiny of the proof of Theorem 1.6
	of \cite{CH19}, using the boundedness of $H_{s,z}$,
	yields that $\Phi_{s,z}^\varepsilon>0$ a.s.\ in all cases; see also Theorem 1.8
	of Chen and Huang \cite{CH19Density}.
	Finally, we let $\varepsilon\downarrow0$ and appeal to Theorem 1.9
	of Chen and Huang \cite{CH19} to see that $\lim_{\varepsilon\to0}\Phi_{s,z}^\varepsilon(t\,,x)=
	\Phi_{s,z}(t\,,x)$ in $L^2(\Omega)$ for all $(t\,,x)\in(0\,,\infty)\times\R^d$. These facts,
	together imply $\Phi_{s,z}(t\,,x)\ge0$ a.s.\ for all $(t\,,x)\in(0\,,\infty)\times\R^d$,
	which is another way to write \eqref{D:ge:0}. This concludes the proof.
\end{proof}

\end{document}